\numberwithin{equation}{section}
\newtheorem{theorem}{Theorem}[section]
\newtheorem{lemma}[theorem]{Lemma}
\newtheorem{problem}[theorem]{Problem}
\newtheorem{corollary}[theorem]{Corollary}
\newtheorem{statement}[theorem]{Statement}
\newtheorem{fact}[theorem]{Fact}
\theoremstyle{definition}
\newtheorem{example}[theorem]{Example}
\newtheorem{remark}[theorem]{Remark}
\newtheorem{definition}[theorem]{Definition}
\newtheorem{notation}[theorem]{Notation}
\DeclareMathOperator{\diam}{diam}
\DeclareMathOperator{\dist}{dist}
\DeclareMathOperator{\graph}{graph}
\DeclareMathOperator{\inter}{int}
\DeclareMathOperator{\cl}{cl}
\DeclareMathOperator{\Prob}{Pr}
\newcommand{\NN}{\mathbb{N}}
\newcommand{\RR}{\mathbb{R}}
\newcommand{\ZZ}{\mathbb{Z}}
\newcommand{\PP}{\mathbb{P}}
\newcommand{\EE}{\mathbb{E}}
\newcommand{\iA}{\mathcal{A}}
\newcommand{\iB}{\mathcal{B}}
\newcommand{\iH}{\mathcal{H}}
\newcommand{\iI}{\mathcal{I}}
\newcommand{\iF}{\mathcal{F}}
\newcommand{\iS}{\mathcal{S}}
\begin{document}

\title{Dimensions of graphs of prevalent continuous maps}

\author{Rich\'ard Balka}
\address{Current address: Department of Mathematics, University of British Columbia, and Pacific Institute for the Mathematical Sciences, Vancouver,
BC V6T 1Z2, Canada}
\email{balka@math.ubc.ca}
\address{Former affiliations: Department of Mathematics, University of Washington, Box 354350, Seattle, WA 98195-4350, USA and Alfr\'ed R\'enyi Institute of Mathematics, Hungarian Academy of Sciences, PO Box 127, 1364 Budapest, Hungary}

\thanks{The author was supported by the Hungarian Scientific Research Fund grant no.~104178.}

\subjclass[2010]{Primary: 28A78, 28C10, 46E15, Secondary: 60B05, 54E52.}

\keywords{Haar null, shy, prevalent, Hausdorff dimension, packing dimension, Minkowski dimension, box dimension, graph, continuous map.}

\begin{abstract}
Let $K$ be an uncountable compact metric space and let $C(K,\mathbb{R}^d)$ denote the set of continuous maps $f\colon K \to \mathbb{R}^d$ endowed with the maximum norm. The goal of this paper is to determine various fractal dimensions of the graph of the prevalent $f\in C(K,\mathbb{R}^d)$.

As the main result of the paper we show that if $K$ has finitely many isolated points then
the lower and upper box dimension of the graph of the prevalent $f\in C(K,\mathbb{R}^d)$ are
$\underline{\dim}_B K+d$ and $\overline{\dim}_B K+d$, respectively.
This generalizes a theorem of Gruslys, Jonu\v{s}as, Mijovi\`c, Ng, Olsen, and Petrykiewicz.

We prove that the graph of the prevalent $f\in C(K,\mathbb{R}^d)$ has packing dimension $\dim_P K+d$,
generalizing a result of Balka, Darji, and Elekes.

Balka, Darji, and Elekes proved that the Hausdorff dimension of the graph of the prevalent
$f\in C(K,\mathbb{R}^d)$ equals $\dim_H K+d$.
We give a simpler proof for this statement based on a method of Fraser and Hyde.
\end{abstract}

\maketitle
\thispagestyle{empty}

\section{Introduction}

Assume that $G$ is a \emph{Polish group}, that is, a separable topological group endowed with a compatible complete metric.
If $G$ is locally compact then it admits a \emph{Haar measure},
i.e.\ a left translation invariant Borel measure which is regular, finite on compact sets,
and positive on non-empty open sets.
The concept of Haar measure cannot be extended to groups that are not locally compact, but surprisingly the idea of Haar measure zero sets can. The next
definition is due to Christensen \cite{C}, which was rediscovered later by Hunt, Sauer and York \cite{HSY}.

\begin{definition} \label{d:shy} Let $G$ be an abelian Polish group and let $A\subset G$. Then $A$ is called \emph{shy} or \emph{Haar null}
if there exists a Borel set $B\subset G$ and a Borel probability measure $\mu$ on $G$ so that $A\subset B$ and $\mu\left(B+x\right) = 0$ for all $x\in G$.
The complement of a shy set is a \emph{prevalent} set.
\end{definition}

Shy sets form a $\sigma$-ideal, and in a locally compact abelian Polish group
they coincide with the Haar measure zero sets, see \cite{C}.
We will apply this concept for the Banach space $G=C(K,\RR^d)$.

\begin{notation} The Hausdorff, lower box, upper box and packing dimension of a metric space $X$ is denoted by
$\dim_H X$, $\underline{\dim}_B X$, $\overline{\dim}_B X$, and $\dim_P X$, respectively.
We use the convention $\dim \emptyset=-1$ for each of the above dimensions. We simply write $C[0,1]=C([0,1],\RR)$.
\end{notation}

Over the last three decades there has been a huge interest in studying properties of \emph{typical} objects,
where typical might mean both generic in the sense of Baire category and prevalent.
Now we summarize the results on dimensions of graphs of continuous maps.
In the category setting Mauldin and Williams \cite{MW} proved the following.

\begin{theorem}[Mauldin-Williams] \label{t:MW} For a generic $f\in C[0,1]$ we have
$$\dim_H \graph(f)=1.$$
\end{theorem}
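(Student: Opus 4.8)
The plan is to establish the two bounds $\dim_H\graph(f)\geq 1$ and $\dim_H\graph(f)\leq 1$ separately, the first holding for \emph{every} $f$ and the second only for the generic $f$. The lower bound is automatic: the projection $\pi\colon\graph(f)\to[0,1]$, $\pi(x,y)=x$, is $1$-Lipschitz and surjective, and Lipschitz maps do not increase Hausdorff dimension, so $\dim_H\graph(f)\geq\dim_H\pi(\graph(f))=\dim_H[0,1]=1$. Hence it suffices to show that $\{f\in C[0,1]:\dim_H\graph(f)\leq 1\}$ is comeager, and for this I would exhibit an explicit dense $G_\delta$ set inside it.

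For a real number $s>1$ and $k\in\NN$ let
$$\iG_{s,k}=\Bigl\{f\in C[0,1]: \graph(f)\subset\textstyle\bigcup_{i=1}^{N}V_i \text{ for some open } V_1,\dots,V_N\subset\RR^2 \text{ with } \sum_{i=1}^{N}(\diam V_i)^s<1/k\Bigr\}.$$
First I would check that $\iG_{s,k}$ is open: if $V_1,\dots,V_N$ witness $f\in\iG_{s,k}$, then $\graph(f)$ is a compact subset of the open set $\bigcup_i V_i$, so there is $\eta>0$ with $\{z:\dist(z,\graph(f))<\eta\}\subset\bigcup_i V_i$; and if $\|g-f\|_\infty<\eta$ then $(x,g(x))$ lies within $\eta$ of $(x,f(x))\in\graph(f)$ for every $x$, so $\graph(g)\subset\bigcup_i V_i$ and $g\in\iG_{s,k}$. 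Next I would check density: given $f$ and $\delta>0$, use uniform continuity to pick a partition $0=t_0<\dots<t_M=1$ fine enough that the piecewise linear interpolant $g$ of $f$ through the points $(t_j,f(t_j))$ satisfies $\|f-g\|_\infty<\delta$. Then $\graph(g)$ is a union of $M$ line segments of total length $L<\infty$; covering each segment by balls of diameter $\rho$ uses $O(L/\rho+M)$ sets and gives $\sum(\diam V_i)^s\leq L\rho^{s-1}+M\rho^s$, which tends to $0$ as $\rho\to 0^+$ precisely because $s>1$. So $g\in\iG_{s,k}$ for $\rho$ small, and $\iG_{s,k}$ is dense.

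Now set $\iG=\bigcap_{n\geq 2}\bigcap_{k\geq 1}\iG_{1+1/n,\,k}$, which is a dense $G_\delta$ by the Baire category theorem. For $f\in\iG$ and each fixed $n$ the Hausdorff $(1+1/n)$-content of $\graph(f)$ equals $0$; since any family covering $\graph(f)$ with $\sum(\diam V_i)^{1+1/n}<\varepsilon$ necessarily consists of sets of diameter less than $\varepsilon^{1/(1+1/n)}$, this forces $\mathcal{H}^{1+1/n}(\graph(f))=0$ and hence $\dim_H\graph(f)\leq 1+1/n$. Letting $n\to\infty$ yields $\dim_H\graph(f)\leq 1$ for all $f\in\iG$, and together with the universal lower bound this gives $\dim_H\graph(f)=1$ for the generic $f\in C[0,1]$.

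I expect the only mildly delicate point to be the openness of $\iG_{s,k}$, where compactness of the graph is what lets one pass from a good cover of $\graph(f)$ to a good cover of an entire uniform neighborhood of $f$; the density step is a routine piecewise linear approximation. The whole argument ultimately rests on the elementary observation that for $s>1$ the $s$-dimensional content of a line segment can be made arbitrarily small by cutting it into many short pieces, which is exactly why the \emph{Hausdorff} dimension of the graph stays equal to $1$ even though the generic continuous function is nowhere differentiable and (as is classical) its graph has box-counting dimension $2$.
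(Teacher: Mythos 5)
Your argument is correct. The paper does not itself prove this theorem --- it is stated as background with a citation to Mauldin and Williams \cite{MW} (the paper only remarks that ``the strategy of Mauldin and Williams easily implies'' the generalization to arbitrary uncountable compact $K$). What you have written is a clean reconstruction of that standard Baire-category strategy: the lower bound comes for free from the Lipschitz projection onto $[0,1]$, and the upper bound follows by exhibiting a dense $G_\delta$ of functions whose graphs have zero $\mathcal{H}^s$-measure for every $s>1$. Each $\iG_{s,k}$ is open because the graph is compact, so a finite open cover with small $s$-content persists under small uniform perturbation via the $\eta$-neighborhood argument; each $\iG_{s,k}$ is dense because piecewise linear approximants have rectifiable graphs, whose $s$-content for $s>1$ can be driven to zero by refining the cover. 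The intersection over $s=1+1/n$ and $k$ then gives $\dim_H\graph(f)\le 1$ for the generic $f$. The one step you flag as delicate --- openness --- is handled correctly; the density step is also fine (the estimate $L\rho^{s-1}+M\rho^s\to 0$ is exactly where $s>1$ enters), and passing from vanishing Hausdorff content to vanishing Hausdorff measure is a standard reduction that you justify explicitly. There is nothing to compare against in the paper's text, but your proof is the expected one and is complete.
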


Indeed, the strategy of Mauldin and Williams easily implies the following result, see also \cite{BBE}.

\begin{theorem}
Let $K$ be an uncountable compact metric space and let $d\in \mathbb{N}^+$.
Then for a generic $f\in C(K, \RR^d)$ we have
\[
\dim_H \graph(f)=\dim_H K.
\]
\end{theorem}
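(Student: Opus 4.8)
The plan is to establish the two inequalities $\dim_H\graph(f)\le\dim_H K$ and $\dim_H\graph(f)\ge\dim_H K$ for, respectively, a comeager set of $f$ and for \emph{every} $f\in C(K,\RR^d)$. The lower bound is immediate: the projection $\pi(x,f(x))=x$ is a surjective $1$-Lipschitz map $\graph(f)\to K$ for the maximum metric on $K\times\RR^d$, and Lipschitz maps do not raise Hausdorff dimension, so $\dim_H\graph(f)\ge\dim_H\pi(\graph(f))=\dim_H K$. If $\dim_H K=\infty$ this already gives the theorem, so assume henceforth $\dim_H K<\infty$.

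For the upper bound I would use the Mauldin--Williams scheme. For each real $s>\dim_H K$ and each $n\in\NN^+$, let
\begin{multline*}
U_{s,n}=\bigl\{f\in C(K,\RR^d):\ \graph(f)\ \text{has a finite open cover}\ \{V_j\}\ \text{with}\\
\max_j\diam V_j<1/n\ \text{and}\ \textstyle\sum_j(\diam V_j)^s<1/n\bigr\}.
\end{multline*}
The first step is that $U_{s,n}$ is open: if $\{V_j\}$ witnesses $f\in U_{s,n}$, then the compact set $\graph(f)$ lies in the open set $\bigcup_jV_j$, hence at some positive distance $\varepsilon$ from its complement, and then $\|g-f\|_\infty<\varepsilon$ pushes $\graph(g)$ inside $\bigcup_jV_j$, so the same cover witnesses $g\in U_{s,n}$.

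The crux is that each $U_{s,n}$ is dense, and I would obtain this from one observation: every Lipschitz map $g\colon K\to\RR^d$ lies in $U_{s,n}$, while Lipschitz maps are dense in $C(K,\RR^d)$. Density of Lipschitz maps is standard (Stone--Weierstrass: the bounded Lipschitz functions form a point-separating subalgebra of $C(K,\RR)$ containing the constants, and one then argues coordinatewise for $\RR^d$; alternatively use infimal convolutions, which converge uniformly because $f$ is uniformly continuous on the compact space $K$). And if $g$ is Lipschitz then $x\mapsto(x,g(x))$ is a bi-Lipschitz bijection of $K$ onto $\graph(g)$, so $\dim_H\graph(g)=\dim_H K<s$, hence the $s$-dimensional Hausdorff measure of $\graph(g)$ is zero; thus for $s$ as above and any $n$ there is a countable cover of $\graph(g)$ of mesh $<1/n$ with the sum of the $s$-th powers of the diameters $<1/n$, and thickening its members to open sets and passing to a finite subcover (using compactness of $\graph(g)$, which only decreases that sum) yields a cover witnessing $g\in U_{s,n}$.

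Finally one assembles these facts by Baire's theorem: the set $\bigcap_{k,n\in\NN^+}U_{\dim_H K+1/k,\,n}$ is a dense $G_\delta$, hence comeager, in $C(K,\RR^d)$. If $f$ lies in it, then for each fixed $k$ the graph $\graph(f)$ has, for every $n$, a finite open cover of mesh $<1/n$ with $\textstyle\sum_j(\diam V_j)^s<1/n$ where $s=\dim_H K+1/k$; letting $n\to\infty$ gives $\mathcal{H}^{\dim_H K+1/k}(\graph(f))=0$, hence $\dim_H\graph(f)\le\dim_H K+1/k$ for all $k$, i.e.\ $\dim_H\graph(f)\le\dim_H K$. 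Combined with the first paragraph this gives equality for the generic $f$. I expect the density step to be the only genuinely non-routine point: one must notice that the Lipschitz maps form a dense class whose graphs are bi-Lipschitz copies of $K$, and hence already realise the least possible Hausdorff dimension of a graph over $K$; granting that, the openness and the $G_\delta$ bookkeeping are mechanical. Heuristically, this reflects that a generic continuous map, while wildly oscillating, is uniformly close to maps whose graphs are dimensionally no thicker than $K$.
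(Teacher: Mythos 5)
Your proof is correct and follows exactly the Mauldin--Williams strategy that the paper cites for this theorem (pointing to \cite{BBE} as an additional reference rather than giving details). The lower bound via the $1$-Lipschitz projection $\graph(f)\to K$, and the upper bound via the open sets $U_{s,n}$ of functions whose graphs admit small finite covers of small $s$-content, with density supplied by Lipschitz maps (whose graphs are bi-Lipschitz copies of $K$ and hence have Hausdorff dimension $\dim_H K$), is precisely the intended argument.
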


The following theorems were proved by Hyde et al.\ \cite{HLOPS}. In fact, they considered the case $K\subset \RR$ and $d=1$,
but their proof easily yields the following theorems.

\begin{theorem}[Hyde et al.]
Let $K$ be a compact metric space and let $d\in \NN^+$. Then for a generic $f\in C(K,\RR^d)$ we have
$$\underline{\dim}_B \graph(f)= \underline{\dim}_B K.$$
\end{theorem}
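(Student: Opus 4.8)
The plan is to prove the two inequalities separately: the lower bound holds for \emph{every} $f$, while the upper bound comes from a soft Baire category argument whose only real input is that Lipschitz maps are dense in $C(K,\RR^d)$ and have well-behaved graphs. Write $s=\underline{\dim}_B K$. For any $f\in C(K,\RR^d)$ the projection $(x,y)\mapsto x$ is a $1$-Lipschitz surjection of $\graph(f)$ onto $K$, so $N_\delta(K)\le N_\delta(\graph(f))$ for all $\delta>0$ and hence $\underline{\dim}_B\graph(f)\ge s$. Thus it suffices to show that
\[
G=\{f\in C(K,\RR^d):\underline{\dim}_B\graph(f)\le s\}
\]
is comeager.

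I would exhibit a comeager subset of $G$ as a countable intersection of open dense sets. For $k,n\in\NN^+$ let $V_{k,n}$ be the set of those $f$ for which $\graph(f)$ can be covered by strictly fewer than $\delta^{-(s+1/k)}$ open sets of diameter $<\delta$ for some $\delta\in(0,1/n)$. If $f\in\bigcap_{k,n}V_{k,n}$, then for each fixed $k$ and every $n$ there is $\delta_n\in(0,1/n)$ with $N_{\delta_n}(\graph(f))<\delta_n^{-(s+1/k)}$ (an open $\delta_n$-cover being in particular a cover by sets of diameter $\le\delta_n$); letting $n\to\infty$ gives $\underline{\dim}_B\graph(f)\le s+1/k$, and letting $k\to\infty$ gives $f\in G$. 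Hence $\bigcap_{k,n}V_{k,n}\subseteq G$.

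Each $V_{k,n}$ is open, and this is the one point that genuinely needs care, since $f\mapsto N_\delta(\graph(f))$ is not continuous; the open-cover formulation is precisely the fix. If a finite family $\mathcal{U}$ of open sets of diameter $<\delta$ covers the compact set $\graph(f)$, then $\graph(f)$ has positive distance to the complement of $\bigcup\mathcal{U}$, so some $\rho$-neighbourhood of $\graph(f)$ is still contained in $\bigcup\mathcal{U}$; since $d_H(\graph(f),\graph(g))\le\|f-g\|_\infty$, every $g$ with $\|f-g\|_\infty<\rho$ has $\graph(g)$ covered by the same $\mathcal{U}$ with the same $\delta$, so $g\in V_{k,n}$.

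Each $V_{k,n}$ is dense, because it contains every Lipschitz map and Lipschitz maps are dense in $C(K,\RR^d)$: writing $f=(f_1,\dots,f_d)$, the inf-convolutions $g_i^{(m)}(x)=\inf_{y\in K}\bigl(f_i(y)+m\,d_K(x,y)\bigr)$ are $m$-Lipschitz and converge uniformly to $f_i$ on the compact metric space $K$. If $g$ is $L$-Lipschitz then $x\mapsto(x,g(x))$ is a bi-Lipschitz homeomorphism of $K$ onto $\graph(g)$, whence $\underline{\dim}_B\graph(g)=\underline{\dim}_B K=s<s+1/k$; therefore for arbitrarily small $\delta$ one can cover $\graph(g)$ by fewer than $\delta^{-(s+1/k)}$ open sets of diameter $<\delta$ (the open-set counting function computing the same lower box dimension), and choosing such a $\delta<1/n$ puts $g$ in $V_{k,n}$. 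By the Baire category theorem $\bigcap_{k,n}V_{k,n}$ is comeager and contained in $G$, which together with the universal lower bound proves the theorem. As indicated, the main obstacle is purely technical — the failure of semicontinuity of covering numbers — circumvented by the neighbourhood argument for openness.
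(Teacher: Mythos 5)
The paper states this result with attribution to Hyde et al.\ \cite{HLOPS} and does not include its own proof, so there is no in-paper argument to compare against. Your proof is correct and is the standard Baire-category strategy for such bounds: the lower bound for all $f$ by the $1$-Lipschitz projection of $\graph(f)$ onto $K$; the upper bound via the open dense sets $V_{k,n}$, with openness obtained by the ``slack'' of an open cover of the compact graph together with $d_H(\graph(f),\graph(g))\le\|f-g\|_\infty$, and density via inf-convolution giving a dense set of Lipschitz maps $g$ for which $x\mapsto(x,g(x))$ is bi-Lipschitz and hence $\underline{\dim}_B\graph(g)=\underline{\dim}_B K$. Two cosmetic remarks: (i) when $\underline{\dim}_B K=\infty$ the statement is already forced by the universal lower bound, so you should dispose of that case first before writing $\delta^{-(s+1/k)}$; (ii) the paper's box dimension is defined via $2^{-n}$-packings, so one should note (or recall) the routine equivalence between packing numbers, covering numbers by sets of diameter $\le\delta$, and covering numbers by open sets of diameter $<\delta$ sampled along any sequence $\delta\to 0$, which you use implicitly throughout. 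Neither point affects the correctness of the argument.
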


\begin{theorem}[Hyde et al.] \label{t:Hy}
Let $K$ be an uncountable compact metric space with at most finitely many isolated points and let $d\in \NN^+$.
Then for a generic $f\in C(K,\RR^d)$ we have
$$ \overline{\dim}_B \graph(f)= \overline{\dim}_B K+d.$$
\end{theorem}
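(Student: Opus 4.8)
The plan is to prove the two bounds $\overline{\dim}_B\graph(f)\le\overline{\dim}_B K+d$ for every $f$ and $\overline{\dim}_B\graph(f)\ge\overline{\dim}_B K+d$ for a residual set of $f$; together these give the theorem. The upper bound is routine: any $f\in C(K,\RR^d)$ is uniformly continuous with bounded range, so covering $K$ by $N_\delta(K)$ sets of diameter at most $\delta$ and noting that over each such set the range of $f$ lies in a cube of fixed side length, the piece of $\graph(f)$ above it can be covered by $O(\delta^{-d})$ sets of diameter $O(\delta)$; hence $N_{O(\delta)}(\graph(f))=O(N_\delta(K)\,\delta^{-d})$, and taking the $\limsup$ of $\log N_\delta/\log(1/\delta)$ yields $\overline{\dim}_B\graph(f)\le\overline{\dim}_B K+d$.

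Write $s=\overline{\dim}_B K$ and, for a metric space $X$, let $P_\delta(X)$ denote the largest cardinality of a $\delta$-separated subset of $X$. For $k,m\in\NN^+$ put
\[
U_{k,m}=\bigl\{f\in C(K,\RR^d):P_\sigma(\graph(f))>\sigma^{-(s+d-1/k)}\text{ for some }\sigma\in(0,1/m)\bigr\}.
\]
Since $P_\sigma(\graph(f))\le N_\sigma(\graph(f))$, every $f\in\bigcap_{k,m}U_{k,m}$ satisfies $\overline{\dim}_B\graph(f)\ge s+d$, so it suffices to show each $U_{k,m}$ is open and dense. For openness, suppose $f\in U_{k,m}$ is witnessed by $\sigma$ and $\|g-f\|_\infty<\tau$; then $\graph(g)$ lies within Hausdorff distance $\tau$ of $\graph(f)$ in $K\times\RR^d$, so a $\sigma$-separated subset of $\graph(f)$ yields a $(\sigma-2\tau)$-separated subset of $\graph(g)$ of the same size, i.e.\ $P_{\sigma-2\tau}(\graph(g))\ge P_\sigma(\graph(f))$. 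Because $P_\sigma(\graph(f))$ is an integer strictly larger than $\sigma^{-(s+d-1/k)}$, for $\tau$ small enough one still has $P_{\sigma-2\tau}(\graph(g))>(\sigma-2\tau)^{-(s+d-1/k)}$ with $\sigma-2\tau\in(0,1/m)$, so $g\in U_{k,m}$.

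Density is the heart of the argument. Given $f_0\in C(K,\RR^d)$, $\eta>0$ and $m$, I would perturb $f_0$ by a continuous $h$ with $\|h\|_\infty<\eta$ that oscillates wildly at a small scale. Since $s=\overline{\dim}_B K$ and $s-\tfrac1{2k}<s$, there are arbitrarily small $\rho$ with $P_\rho(K)>\rho^{-(s-1/(2k))}$; fix such a $\rho<1/m$, small enough for the estimate below, and a $\rho$-separated $E\subset K$ of that size. Deleting the finitely many isolated points of $K$ from $E$ leaves a $\rho$-separated set $E'$ of non-isolated points; since $\rho^{-1/(2k)}\to\infty$ as $\rho\to0$, we may assume $\rho$ is small enough that $N:=|E'|$ satisfies $N>C\eta^{-d}\rho^{-(s-1/k)}$ for a large constant $C$ fixed below. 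Set $\sigma=\rho/3$. For each $x\in E'$ choose $r=r(x)\le\rho/15$ so small that $f_0$ varies by less than $\sigma$ on $B(x,r)$ (uniform continuity of $f_0$); since $x$ is non-isolated, $B(x,r)\cap K$ is infinite and hence contains $M:=\lfloor c_d(\eta/\sigma)^d\rfloor$ distinct points $y_{x,1},\dots,y_{x,M}$, and the balls $B(x,r)$, $x\in E'$, are pairwise disjoint. Pick $2\sigma$-separated points $v_{x,1},\dots,v_{x,M}$ in $B(0,\eta/2)\subset\RR^d$ and let $h$ be a continuous map with $\|h\|_\infty<\eta$ and $h(y_{x,a})=v_{x,a}$, obtained by interpolating with Urysohn functions supported near these finitely many points. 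Then $g:=f_0+h$ satisfies $\|g-f_0\|_\infty<\eta$, and $\{(y_{x,a},g(y_{x,a})):x\in E',\,1\le a\le M\}$ is a $\sigma$-separated subset of $\graph(g)$: two points sharing the same base $x$ differ by more than $2\sigma-\sigma=\sigma$ in the $\RR^d$-coordinate (as $f_0$ varies by $<\sigma$ on $B(x,r)$), while two with distinct base points differ by at least $\rho-2r\ge\sigma$ in the $K$-coordinate. Hence $P_\sigma(\graph(g))\ge NM\asymp N\eta^d\rho^{-d}$, which by the choice of $N$ and $C$ exceeds $\sigma^{-(s+d-1/k)}$, so $g\in U_{k,m}$.

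Thus $\bigcap_{k,m}U_{k,m}$ is a dense $G_\delta$ set on which $\overline{\dim}_B\graph(f)\ge s+d$, which together with the upper bound proves the theorem. I expect the main obstacle to be the density step, essentially the bookkeeping of the three scales involved: the scale $\rho$ at which $K$ nearly realizes its upper box dimension, the much smaller scale $r$ at which $f_0$ is effectively constant, and the intermediate scale $\sigma\asymp\rho$ at which the perturbed graph acquires $\asymp(\eta/\sigma)^d$-fold thickness in the range — all while keeping the estimates uniform as $\eta\to0$. The hypothesis that $K$ has only finitely many isolated points enters precisely here: it lets us take the separated set $E$ inside the non-isolated part of $K$, so that each cluster ball $B(x,r)$ meets $K$ in infinitely many points and can carry the full $M$-point oscillation.
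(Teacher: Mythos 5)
Your argument is correct. The paper does not prove Theorem~\ref{t:Hy} itself (it cites Hyde et al.\ \cite{HLOPS}), so there is no internal proof to compare against, but your Baire-category strategy --- an upper bound valid for all $f$, plus a lower bound on a countable intersection of open dense sets $U_{k,m}$, with density achieved by a three-scale perturbation near a nearly-optimal separated set inside the perfect part of $K$ --- is the standard route for such a statement and is the same in spirit as the paper's proof of the prevalence analogue (Theorem~\ref{t:grM}), which likewise perturbs $f_0$ on clusters of points around a $2^{-n}$-packing of $K$ and uses the finitely-many-isolated-points hypothesis for exactly the same purpose.

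One step should be tightened. In the density argument you derive $N := |E'| > C\eta^{-d}\rho^{-(s-1/k)}$ from $N > \rho^{-(s-1/(2k))} - c_0 = \rho^{-(s-1/k)}\rho^{-1/(2k)} - c_0$ together with $\rho^{-1/(2k)}\to\infty$. This chain gives the claim only when $s > 1/(2k)$; if $s \le 1/(2k)$, the lower bound $\rho^{-(s-1/(2k))} - c_0$ is eventually negative and yields nothing. The desired inequality still holds in that range, but for a different, trivial reason: since then $s - 1/k < 0$, the target $C\eta^{-d}\rho^{-(s-1/k)}$ tends to $0$ as $\rho\to 0$, while $N\to\infty$ because $K$ minus its finitely many isolated points is an infinite compact set whose $\rho$-packing number diverges. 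Adding this short case distinction (or simply observing that both $N$ and $N\rho^{s-1/k}$ tend to infinity along the chosen scales) closes the gap. Everything else --- the crude upper bound, the openness of $U_{k,m}$, the disjointness of the cluster balls $B(x,r)$, the $\sigma$-separation of the graph points, and the final count $NM > \sigma^{-(s+d-1/k)}$ after fixing $C$ --- is sound.
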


The following result was proved by Humke and Petruska \cite{HP}.

\begin{theorem}[Humke-Petruska] For a generic $f\in C[0,1]$ we have
$$\dim_P \graph(f)=2.$$
\end{theorem}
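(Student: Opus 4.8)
The plan is to deduce $\dim_P\graph(f)=2$ for generic $f\in C[0,1]$ from the upper box dimension theorem of Hyde et al.\ (Theorem~\ref{t:Hy}) together with the countable stability of packing dimension. The upper bound is automatic, since $\graph(f)$ is a bounded subset of $\RR^2$ and hence $\dim_P\graph(f)\le\overline{\dim}_B\graph(f)\le 2$. For the lower bound I would work with the characterization $\dim_P X=\inf\{\sup_{i}\overline{\dim}_B X_i: X=\bigcup_{i\in\NN}X_i\}$ and first reduce to a statement about restrictions of $f$. I claim that if $f\in C[0,1]$ satisfies
\[
\overline{\dim}_B\graph(f|_I)=2\quad\text{for every nondegenerate closed }I\subseteq[0,1]\text{ with rational endpoints},
\]
then $\dim_P\graph(f)\ge 2$. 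Indeed, let $\graph(f)=\bigcup_{i\in\NN}X_i$ be an arbitrary countable cover and let $F_i$ be the closure of $X_i$ inside the compact metric space $\graph(f)$; then $\graph(f)=\bigcup_i F_i$ is a cover by compact sets, so by the Baire category theorem some $F_{i_0}$ has nonempty interior in $\graph(f)$, that is, there is an open set $W\subseteq\RR^2$ with $\emptyset\ne W\cap\graph(f)\subseteq F_{i_0}$. Picking $(x_0,f(x_0))\in W$ and using the continuity of $f$ at $x_0$ to keep the graph inside a small ball over a short interval, one finds a nondegenerate rational interval $I\ni x_0$ with $\graph(f|_I)\subseteq W\cap\graph(f)\subseteq F_{i_0}\subseteq\overline{X_{i_0}}$. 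Since upper box dimension is monotone and unchanged under closure in $\RR^2$, $\overline{\dim}_B X_{i_0}=\overline{\dim}_B\overline{X_{i_0}}\ge\overline{\dim}_B\graph(f|_I)=2$, so $\sup_i\overline{\dim}_B X_i\ge 2$; as the cover was arbitrary, $\dim_P\graph(f)\ge 2$.

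It remains to produce a comeager set of $f\in C[0,1]$ satisfying the displayed property. Fix a nondegenerate closed interval $I\subseteq[0,1]$ with rational endpoints; then $I$ is an uncountable compact metric space with no isolated points, so Theorem~\ref{t:Hy} (with $K=I$, $d=1$) gives that $\{g\in C(I,\RR):\overline{\dim}_B\graph(g)=\overline{\dim}_B I+1=2\}$ is comeager in $C(I,\RR)$. The restriction map $r_I\colon C[0,1]\to C(I,\RR)$, $r_I(f)=f|_I$, is continuous and linear and admits a continuous linear right inverse (extend a function on $I$ by the appropriate constant value on each of the two complementary subintervals), hence $r_I$ is an open surjection, and therefore the $r_I$-preimage of a comeager set is comeager. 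Thus $\{f\in C[0,1]:\overline{\dim}_B\graph(f|_I)=2\}$ is comeager for each such $I$, and since there are only countably many rational intervals $I$, their intersection is comeager; for every $f$ in this intersection the first paragraph yields $\dim_P\graph(f)\ge 2$, and with the trivial upper bound $\dim_P\graph(f)=2$.

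The one step needing real care is the reduction in the first paragraph: I must ensure that a somewhere-dense closed piece $F_{i_0}$ of the cover actually swallows the \emph{entire} graph of some restriction $f|_I$ over a genuine interval, which is exactly where the continuity of $f$ (the graph stays in a small box over a short interval) and the restriction to rational endpoints (to keep the relevant family countable) are used; the remaining ingredients — countable stability of $\dim_P$, the Baire theorem on the compact space $\graph(f)$, and the transfer of genericity along the open map $r_I$ — are routine. Alternatively one could avoid Theorem~\ref{t:Hy} and argue directly, building for each $n$ a dense open set of functions whose graphs exhibit box-counting of order at least $2-1/n$ over every short subinterval and then diagonalizing, but this essentially re-proves Theorem~\ref{t:Hy} and is longer.
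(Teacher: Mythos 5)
Your proof is correct, but note that the paper does not actually prove this statement: the Humke--Petruska theorem is quoted in the introduction as background and attributed to \cite{HP}, so there is no in-paper proof to compare against. That said, your argument is sound and, interestingly, mirrors almost exactly the strategy the paper uses for the \emph{prevalent} packing-dimension result (Theorem~\ref{t:grP}): localize to a countable family of compact pieces, invoke the upper-box-dimension theorem on each piece, transfer genericity (there, prevalence) through the restriction map, and then upgrade ``upper box dimension $\geq s$ on every relatively open piece'' to ``packing dimension $\geq s$.'' Two small remarks. First, your Baire-category paragraph is in effect a re-derivation of Lemma~\ref{l:pack2} (which the paper records precisely for this purpose, citing Tricot and Falconer), so you could shorten the argument by citing it directly once you have shown that every nonempty relatively open subset of $\graph(f)$ contains some $\graph(f|_I)$. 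Second, the claim that $r_I$ is open is fine, and your right-inverse argument does give it (for $g'$ near $g=r_I(f)$, take $f'=f+E(g'-g)\in V$), but it is cleaner to just invoke the open mapping theorem for the continuous linear surjection $r_I$ between Banach spaces; either way the transfer of comeagerness along $r_I$ is the same routine fact used implicitly in the paper via Corollary~\ref{c:her} in the prevalent setting. Finally, be aware that this route is anachronistic as a reconstruction of Humke and Petruska's original 1988/89 proof, since Theorem~\ref{t:Hy} (Hyde et al., 2012) postdates it; but as a modern proof it is correct and in fact yields the stronger Liu et al.\ statement $\dim_P\graph(f)=\dim_P K+1$ for generic $f\in C(K,\RR)$ with only minor changes (using Lemma~\ref{l:pack1} to pass to a suitable compact subset, as the paper does in the proof of Theorem~\ref{t:grP}).
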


Recently Liu et al.\ \cite{LTW} proved the following generalization.

\begin{theorem}[Liu et al.] Let $K$ be an uncountable compact metric space. Then for a generic $f\in C(K,\RR)$ we have
$$\dim_P \graph(f)=\dim_P K+1.$$
\end{theorem}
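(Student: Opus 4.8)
The plan is to prove $\dim_P\graph(f)\le\dim_P K+1$ for \emph{every} $f\in C(K,\RR)$ and the reverse inequality for the generic $f$. For the upper bound (trivial if $\dim_P K=\infty$), recall that packing dimension coincides with the modified upper box dimension $\overline{\dim}_{\mathrm{MB}}$: given $\varepsilon>0$, write $K=\bigcup_i K_i$ with $\overline{\dim}_B K_i\le\dim_P K+\varepsilon$; then $\graph(f)=\bigcup_i\graph(f|_{K_i})$ with $\graph(f|_{K_i})\subseteq\graph(f|_{\overline{K_i}})$, and a standard consequence of uniform continuity gives $\overline{\dim}_B\graph(f|_{\overline{K_i}})\le\overline{\dim}_B\overline{K_i}+1=\overline{\dim}_B K_i+1$, so $\dim_P\graph(f)=\overline{\dim}_{\mathrm{MB}}\graph(f)\le\dim_P K+1+\varepsilon$; let $\varepsilon\to0$. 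For the lower bound it suffices, after intersecting over $t=\dim_P K-1/j$ ($j\in\mathbb{N}$, or over $t=j$ when $\dim_P K=\infty$), to fix $t<\dim_P K$ and show $\{f:\dim_P\graph(f)\ge t+1\}$ is comeager; the boundary case $\dim_P K=0$ is handled by the argument below run with a Cantor subset of $K$ in place of $F$ (there box-regularity is vacuous and we only need $\overline{\dim}_B F_{j,l}+1\ge1$).

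The key step is the extraction of a \emph{box-dimension-regular} set: $K$ contains a nonempty compact perfect $F$ with $\overline{\dim}_B(F\cap V)\ge t$ for every open $V$ meeting $F$. I would build $F$ by a transfinite derivation, $F_0=K$ and $F_{\xi+1}=\overline{\{x\in F_\xi:\overline{\dim}_B(F_\xi\cap B(x,r))\ge t\text{ for all }r>0\}}$, with intersections at limits. This decreasing sequence of compact sets stabilizes at a countable ordinal, and by the Lindel\"of property the set $F_\xi\setminus F_{\xi+1}$ is contained in a countable union of sets $F_\xi\cap B(x,r)$ of upper box dimension $<t$, hence has packing dimension $\le t$; summing over the countably many steps, the removed part has packing dimension $\le t<\dim_P K$, which forces the stable set $F$ to be nonempty. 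Box-regularity is immediate from stabilization, and $F$ is perfect since $t>0$ makes $F\cap V$ infinite for every open $V$ meeting $F$. \textbf{I expect this lemma to be the main obstacle}, though it is essentially a known structural fact about packing dimension; everything else is soft.

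Finally I would feed $F$ into Theorem~\ref{t:Hy}. Fix a countable dense $\{x_j\}\subseteq F$ and, for each $j$, radii $r_{j,l}\to0$ avoiding the countably many values $r$ at which some $y$ with $\dist(y,x_j)=r$ is isolated in $F\cap\overline{B(x_j,r)}$ (such $y$ would be a strict local minimum of $\dist(\cdot,x_j)|_F$, of which there are only countably many); then each $F_{j,l}:=F\cap\overline{B(x_j,r_{j,l})}$ is compact, perfect, uncountable, with $\overline{\dim}_B F_{j,l}\ge t$, and every nonempty relatively open subset of $F$ contains some $F_{j,l}$. By Theorem~\ref{t:Hy} the generic $\phi\in C(F_{j,l},\RR)$ satisfies $\overline{\dim}_B\graph(\phi)=\overline{\dim}_B F_{j,l}+1\ge t+1$, and since the restriction map $C(K,\RR)\to C(F_{j,l},\RR)$ is a continuous open surjection (Tietze), the set $\{f:\overline{\dim}_B\graph(f|_{F_{j,l}})\ge t+1\}$ is comeager; alternatively one can bypass Theorem~\ref{t:Hy} by a direct perturbation, adding Tietze-extended bump functions on disjoint small balls to force the required oscillation. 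Intersecting over all $j,l$ yields a comeager set of $f$ for which every nonempty relatively open subset $W$ of the compact set $\graph(f|_F)$ contains some $\graph(f|_{F_{j,l}})$, hence $\overline{\dim}_B W\ge t+1$; applying the Baire category theorem inside $\graph(f|_F)$ together with $\dim_P=\overline{\dim}_{\mathrm{MB}}$ gives $\dim_P\graph(f)\ge\dim_P\graph(f|_F)\ge t+1$, which completes the proof.
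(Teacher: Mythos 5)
The paper does not prove this theorem; it is quoted from Liu--Tan--Wu \cite{LTW}. The closest in-paper argument is the proof of the prevalent analogue, Theorem~\ref{t:grP} in Section~\ref{s:P}, and your proof is essentially the category-theoretic mirror of that argument: you feed a localized box-dimension result (Theorem~\ref{t:Hy}) into a ``regular compact subset'' of $K$ and then globalize, exactly as the paper feeds Theorem~\ref{t:grM} into Lemma~\ref{l:pack1}. In fact the lemma you flag as the main obstacle --- extracting a compact perfect $F\subset K$ with $\overline{\dim}_B(F\cap V)\ge t$ for every open $V$ meeting $F$ --- is precisely (the proof of) Lemma~\ref{l:pack1}, which the paper cites from \cite{MM} and \cite{FaH}, and your closing ``Baire inside the graph'' step is Lemma~\ref{l:pack2}; both are standard. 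Two places where you do more work than needed, though nothing is wrong: for the upper bound the paper simply notes $\graph(f)\subset K\times[-M,M]$ and invokes Fact~\ref{f:prod}, so $\dim_P\graph(f)\le\dim_P K+1$ is immediate without decomposing via modified box dimension; and the careful choice of radii $r_{j,l}$ to dodge strict local minima of $\dist(\cdot,x_j)|_F$ is only forced on you because you restricted to metric balls --- if instead you take a countable basis $\{U_i\}$ of $F$ and set $C_i=\cl(U_i)$, each $C_i$ is automatically perfect (the closure of a nonempty open subset of a perfect space is perfect), which is what the paper does in its proof of Theorem~\ref{t:grP}. With those simplifications your argument coincides with the paper's template.
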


Now consider graphs of prevalent continuous maps.
First McClure proved in \cite{Mc} that the packing dimension (and hence the upper box dimension) of the graph of a prevalent
$f\in C[0,1]$ is 2. For the lower box dimension the analogous result was proved independently in \cite{FF}, \cite{GJMNOP}, and \cite{Sh}.
Moreover, Gruslys et al.\ \cite{GJMNOP} proved the following theorem.

\begin{theorem}[Gruslys et al.] \label{t:GJMNOP}
Let $K\subset \RR^m$ be an uncountable compact set. Assume that $K$ satisfies the following property: there is a $\delta_0>0$ such that
for all $\delta\leq \delta_0$ and for every cube of the form $Q=\prod_{i=1}^{m}[m_i\delta,(m_i+1)\delta]$ $(m_i\in \ZZ)$
the intersection $K\cap Q$ is path connected. Then
for a prevalent $f\in C(K,\RR)$ we have
\begin{align*} \underline{\dim}_B\graph(f)&=\underline{\dim}_B K+1, \\
\overline{\dim}_B \graph(f)&=\overline{\dim}_B K+1.
\end{align*}
\end{theorem}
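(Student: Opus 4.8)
The two upper bounds hold for \emph{every} $f\in C(K,\RR)$, with no genericity involved: covering $K$ by $N_\delta(K)$ sets of diameter $\le\delta$, the part of $\graph(f)$ above each of them lies in $O\big(\omega_f(\delta)/\delta+1\big)$ cubes of side $\delta$ in $\RR^{m+1}$ (implied constant depending only on $m$), where $\omega_f$ is the modulus of continuity; since $\omega_f(\delta)$ stays bounded, $\log\big(\omega_f(\delta)/\delta+1\big)\le\log(1/\delta)+O(1)$, and passing to $\liminf$ and $\limsup$ in $\log N_\delta(\graph f)/\log(1/\delta)$ gives $\underline{\dim}_B\graph(f)\le\underline{\dim}_BK+1$ and $\overline{\dim}_B\graph(f)\le\overline{\dim}_BK+1$. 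So the whole content is the two lower bounds, and both of them follow from a single estimate: I would produce a Borel probability measure $\mu$ on $C(K,\RR)$ such that for every $h\in C(K,\RR)$, $\mu$-a.e.\ $g$, and every $\varepsilon>0$ there is $\delta_0>0$ with
\[
N_\delta\big(\graph(h+g)\big)\ \ge\ \delta^{-1}\,N_\delta(K)\,\delta^{\varepsilon}\qquad (0<\delta<\delta_0).
\]
Plugging this into $\log N_\delta(\graph(h+g))/\log(1/\delta)$ yields $\underline{\dim}_B\graph(h+g)\ge\underline{\dim}_BK+1$ and $\overline{\dim}_B\graph(h+g)\ge\overline{\dim}_BK+1$ simultaneously; as the exceptional set of $g$ is the preimage of a Borel set under the homeomorphism $g\mapsto h+g$, this exhibits the complement as shy. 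By monotonicity of $\delta\mapsto N_\delta(\cdot)$ it is enough to prove the inequality along $\delta=\delta_i:=2^{-i}$.

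For $\mu$ I would use a randomized ``wild oscillation'' perturbation. Fix a fast-growing sequence of powers of two $n_k=2^{2^k}$ (so that $n_k^{-1}\le\delta_0$ and the hypothesis applies on the $n_k^{-1}$-grid once $k$ is large) and amplitudes $a_k=4^{-k}$; note $\sum_ka_k<\infty$, $a_kn_{k-1}\to\infty$, and $\log(1/a_k)=o(\log n_{k-1})$. Let $\psi_k\colon K\to[-C_m,C_m]$ be random and continuous, built by taking finitely many shifted copies of the $n_k^{-1}$-grid and, on each grid cell $S$ whose core meets $K$ in at least two points, choosing two such points $p_S\ne p_S'$, joining them by a path inside $K\cap S$ (possible by the hypothesis for $k$ large), and setting $\psi_k|_{K\cap S}=\varepsilon_{k,S}\,\phi_{k,S}$ with $\phi_{k,S}\colon K\cap S\to[0,1]$ continuous, vanishing near $\partial S$, $\phi_{k,S}(p_S)=1$, $\phi_{k,S}(p_S')=0$, and $\varepsilon_{k,S}$ independent fair signs; elsewhere $\psi_k:=0$. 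Then $g:=\sum_ka_k\psi_k$ converges uniformly and a.s.\ lies in $C(K,\RR)$; let $\mu$ be its law.

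The heart of the matter is an oscillation-survival estimate robust against the fixed $h$. Given $h$ and $\delta=\delta_i$ small, let $j=j(\delta)$ be minimal with $n_j^{-1}\le\delta$, and consider a $\delta$-cube $Q$ with $K\cap Q$ path-connected and of cardinality $\ge2$ --- all but boundedly many $\delta$-cubes meeting $K$ are of this kind, the rest being easily controlled by the hypothesis. One locates a cell $S$ at scale $n_j^{-1}$, essentially inside $Q$, carrying a bump $\varepsilon_{j,S}\phi_{j,S}$ of $\psi_j$; writing $h+g=G+a_j\varepsilon_{j,S}\phi_{j,S}$ on $K\cap S$ with $G$ independent of $\varepsilon_{j,S}$ and evaluating at $p_S,p_S'$ gives $(h+g)(p_S)-(h+g)(p_S')=D+\varepsilon_{j,S}a_j$ with $D$ fixed, and $\max\{|D+a_j|,|D-a_j|\}=|D|+a_j\ge a_j$, so $\Prob\big[\operatorname{osc}_Q(h+g)\ge a_j\mid\text{all but }\varepsilon_{j,S}\big]\ge\tfrac12$. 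Since distinct $\delta$-cubes use bumps in disjoint (or boundedly overlapping) regions, these events are essentially independent, so by Hoeffding's inequality, with probability $\ge1-e^{-c\,N_\delta(K)}$ at least $\gtrsim N_\delta(K)$ of the cubes $Q$ satisfy $\operatorname{osc}_Q(h+g)\ge a_j$; as $N_{\delta_i}(K)\to\infty$ these failure probabilities are summable, and Borel--Cantelli makes the event hold for all large $i$, almost surely. For each such good cube $Q$, path-connectedness forces $(h+g)(K\cap Q)$ to be an interval of length $\ge a_j$, so $\graph(h+g)$ meets $\ge a_j/\delta$ cubes of side $\delta$ above $Q$ (here $a_j/\delta\ge a_jn_{j-1}\ge1$); summing over the $\gtrsim N_\delta(K)$ good cubes,
\[
N_\delta\big(\graph(h+g)\big)\ \gtrsim\ N_\delta(K)\,\frac{a_j}{\delta}\ =\ \delta^{-1}\,N_\delta(K)\,\delta^{\log(1/a_j)/\log(1/\delta)},
\]
and $\log(1/a_{j(\delta)})/\log(1/\delta)\to0$ because $\delta<n_{j-1}^{-1}$ and $\log(1/a_j)=o(\log n_{j-1})$, which is the required estimate.

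I expect the main obstacle to be the geometric bookkeeping compressed into ``one locates a cell $S$, essentially inside $Q$, carrying a full-oscillation bump'': one must guarantee, using only that $K\cap Q$ is path-connected, that at \emph{every} small scale $\delta$ a fixed proportion of the $\delta$-cubes meeting $K$ contain a grid cell on which a bump of $\psi_{j(\delta)}$ attains oscillation $1$ on two points of $K$ --- this is exactly why several shifted grids are needed (so that $K$ cannot be aligned with every grid hyperplane) and why the bump profiles must follow paths inside $K$. The path-connectedness hypothesis is used in precisely two ways: to construct these full-oscillation bumps, and, at the very end, to convert oscillation of $h+g$ on a $\delta$-cube into a proportional number of covering cubes of the graph.
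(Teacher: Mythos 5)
This theorem is not proved in the paper: it is quoted from \cite{GJMNOP}, and the paper only summarizes the original argument, which is quite different from yours. Under the connectivity hypothesis one shows $\overline{\dim}_B\graph(f+g)\leq\max\{\overline{\dim}_B\graph(f),\overline{\dim}_B\graph(g)\}$ (and the analogue for $\underline{\dim}_B$), so $\Delta(f)=\dim_B\graph(f)$ satisfies the intertwining condition of Definition~\ref{d:D}, and Theorem~\ref{t:Gea} then yields prevalence via a measure supported on a \emph{one-dimensional} subspace of $C(K,\RR)$. Your proposal instead builds an infinite-dimensional random perturbation and applies Borel--Cantelli; this is the strategy the paper uses for its generalization, Theorem~\ref{t:grM}, but with a different mechanism. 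There, near each element of a $2^{-n}$-packing of $K$ one places $\ell_n$ distinct points (using only perfectness), assigns them i.i.d.\ values in a $2^{-n+2}$-separated grid $S_n\subset\RR^d$, and controls the packing number of $\{X_i^n+y_i\}$ against an adaptive adversary via Statement~\ref{st}. Your mechanism --- random-sign bumps built along paths in $K$, with path-connectivity turning oscillation over a $\delta$-cube into a proportional number of covering cubes of the graph --- is tied to the connectivity hypothesis and would not give Theorem~\ref{t:grM}, but in the setting at hand it is a reasonable alternative route.

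There are, however, genuine gaps beyond the bookkeeping you flag. First, the Borel--Cantelli step needs $\sum_i\Prob(\textrm{failure at }\delta_i)<\infty$, but with a single random sign per $\delta$-cube your Hoeffding/Azuma bound gives failure probability roughly $e^{-cN_{\delta_i}(K)}$, and for an abstract uncountable compact $K$ there is no lower bound on how slowly $N_{\delta_i}(K)\to\infty$, so this series need not converge. The paper's Statement~\ref{st} avoids exactly this by taking $\ell_n$ trials per location, with $\ell_n$ chosen (via \eqref{eq:mn}) so that each local failure probability is at most $1/(k_n2^n)$ and the total is at most $2^{-n}$; you would need to place a number of independent bumps per $\delta$-cube that grows with the scale, calibrated against $N_\delta(K)$, to make the same repair. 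Second, the two geometric claims --- that all but $O(1)$ of the $\delta$-cubes meeting $K$ satisfy $\#(K\cap Q)\geq2$, and that each such $Q$ contains an $n_{j(\delta)}^{-1}$-cell on which some $\phi_{j,S}$ attains full oscillation on two points of $K\cap Q$ --- are precisely the content of the lower bound and neither is proved. For the first, the hypothesis does force isolated points of $K$ to be $\gtrsim\delta_0$-separated from $K\setminus\{p\}$ and hence finitely many, but cubes whose intersection with $K$ is a single non-isolated boundary point are not obviously $o(N_\delta(K))$ in number. For the second, $K\cap Q$ may have diameter far below $n_{j(\delta)}^{-1}$ and may sit near boundaries of all of your finitely many shifted grids at once; you must either let the number of shifts grow or prove that $O_m(1)$ shifts always suffice. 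Until these points are filled in, this is an outline rather than a proof.
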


As the main result of this paper, we generalize Theorem~\ref{t:GJMNOP} in Section~\ref{s:M}.

\begin{theorem} \label{t:grM} Let $K$ be an uncountable compact metric space with at most finitely many isolated points and let $d\in \NN^+$.
Then for a prevalent $f\in C(K,\RR^d)$ we have
\begin{align*} \underline{\dim}_B\graph(f)&=\underline{\dim}_B K+d, \\
\overline{\dim}_B \graph(f)&=\overline{\dim}_B K+d.
\end{align*}
\end{theorem}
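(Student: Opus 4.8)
The plan is to establish the two equalities by proving an upper bound (which holds for every $f$, not just the prevalent one) and a matching lower bound (which holds on a prevalent set). The upper bound is easy: for any $f \in C(K,\RR^d)$, the graph $\graph(f)$ is the image of $K$ under the Lipschitz map $x \mapsto (x, f(x))$ composed with... no, more directly, covering $K$ by $N_r(K)$ sets of diameter $\le r$ and using uniform continuity to subdivide each into boundedly many pieces of diameter $\le r$ in $\RR^d$, one gets $N_r(\graph f) \lesssim N_r(K) \cdot r^{-d}$ after a scale adjustment; taking logarithms and limits gives $\underline{\dim}_B \graph(f) \le \underline{\dim}_B K + d$ and likewise for the upper box dimension. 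So the content is entirely in the two lower bounds.

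For the lower bounds I would follow the Haar-null strategy: exhibit a single probability measure $\mu$ on $C(K,\RR^d)$ — a "probe" — such that for every $g \in C(K,\RR^d)$, the translated set $\{f : \underline{\dim}_B\graph(f) < \underline{\dim}_B K + d\} - g$ (resp.\ with $\overline{\dim}_B$) is $\mu$-null; equivalently, for $\mu$-almost every $f$ and every $g$, the function $f+g$ has graph of lower (resp.\ upper) box dimension at least the claimed value. A natural choice is to let $\mu$ be the distribution of a suitable random continuous map built from fractional-Brownian-type or lacunary random series, or — mimicking Gruslys et al.\ — a random map whose increments oscillate at every scale by a definite amount. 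The key geometric input replacing their path-connectedness hypothesis is that $K$ has only finitely many isolated points: after discarding the isolated points (which affect no box dimension), every point of $K$ is a limit point, so in any ball $B(x,r) \cap K$ one can find a pair of points at distance comparable to $r$; feeding this pair into the oscillation estimate for the random map forces, with probability one, the graph over $B(x,r)\cap K$ to have $r^{-d}$-many "vertical" cells occupied, uniformly in the translate $g$ because adding a fixed uniformly continuous $g$ only perturbs increments by $o(r)$ at small scales.

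Concretely, the steps are: (1) reduce to $K$ perfect by removing isolated points; (2) fix a sequence of scales $r_k \to 0$ realizing $\underline{\dim}_B K$ (resp.\ a sequence realizing $\overline{\dim}_B K$), and in each, fix a maximal $r_k$-separated set $S_k \subset K$ with $|S_k| \asymp N_{r_k}(K)$; (3) define the random map $\mathbf{f} = \sum_k \mathbf{f}_k$ where $\mathbf{f}_k$ is a random $\RR^d$-valued function, supported near scale $r_k$, taking independent values of size $\asymp r_k$ on the points of a net, with a Lipschitz-in-$k$ control so the series converges in $C(K,\RR^d)$ almost surely and defines $\mu$; (4) show that for fixed $g$, with probability one, for every $k$ large and every $s \in S_k$, the image $(\mathbf{f}+g)(B(s, r_k)\cap K)$ meets $\gtrsim r_k^{-d}$ distinct $r_k$-cubes in the $\RR^d$ factor — a Borel–Cantelli / large-deviations estimate exploiting independence and the fact that $|S_k|$ is only polynomially large in $r_k^{-1}$ while each "failure" has superpolynomially small probability (or, if only polynomially small, one thickens the net hierarchy); (5) conclude $N_{r_k}(\graph(\mathbf f + g)) \gtrsim N_{r_k}(K)\, r_k^{-d}$, hence the desired lower bound along $\{r_k\}$, which suffices since for the lower box dimension any sequence of scales gives an upper estimate on $\underline{\dim}_B$ only — so I must instead choose $\{r_k\}$ to be \emph{all} sufficiently fine dyadic scales and prove the cell-count bound at every scale simultaneously, which the uniform-over-scales oscillation of the random series delivers.

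The main obstacle I anticipate is step (4) combined with the uniformity over all translates $g$ and, for $\underline{\dim}_B$, over all small scales at once: one needs a single almost-sure event on which the graph of $\mathbf f + g$ is "fat" at every scale for every $g$, and the naive union bound over $g$ fails since $C(K,\RR^d)$ is uncountable. The resolution — this is exactly the subtle point in Gruslys et al.\ and the place where the finitely-many-isolated-points hypothesis does real work — is that the oscillation of $\mathbf f$ at scale $r_k$ on a pair of $r_k$-close points is, with overwhelming probability, larger than the total modulus of continuity contribution from $g$ at that scale \emph{for any} $g$ (since $g$ is continuous on a compact set, its oscillation on $r_k$-balls tends to $0$, while $\mathbf f$'s stays $\asymp r_k$), so the event can be phrased purely in terms of $\mathbf f$ and a quantitative lower bound on its own oscillations, independent of $g$. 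Making this "$g$-free" reformulation precise, and checking the resulting event has full $\mu$-measure via Borel–Cantelli with the right lacunarity of the scales $r_k$, is the technical heart of the argument.
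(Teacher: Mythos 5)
Your high-level plan (a ``probe'' measure built from independent random perturbations at each dyadic scale, then Borel--Cantelli) is indeed the strategy the paper uses, and your observation that the upper bound is automatic and that removing the finitely many isolated points reduces to $K$ perfect is correct. But there are two substantial gaps in the way you set it up, and your proposed resolution of the second one would actually fail.

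First, the parameter scales are off. You propose a random perturbation $\mathbf f_k$ ``taking independent values of size $\asymp r_k$.'' With vertical spread $\asymp r_k$ the image of a ball under $\mathbf f_k$ lands in $O(1)$ many $r_k$-cells, not $\gtrsim r_k^{-d}$. To gain the extra factor of $r_k^{-d}$ one needs the random values at scale $2^{-n}$ to range over a grid of mesh $\asymp 2^{-n}$ but of \emph{cardinality} $\asymp 2^{nd}$, so the overall spread must be $\asymp 1$ up to a slowly decaying factor (the paper uses $S_n=2^{-n+3}\{0,\dots,\lfloor 2^{n}n^{-2}\rfloor\}^d$, with spread $\asymp n^{-2}$, exactly so that $\sum_n \|f_n\|_\infty<\infty$ and one still has $\# S_n\geq 2^{nd}n^{-2d}$). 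Second, and more importantly, you place the random value at ``the points of a net'' with no multiplicity, and then invoke a large-deviations estimate, but with one random draw per net point the image of a net ball hits one cell, not $r_k^{-d}$ cells, no matter how it is shifted. The missing idea is a coupon-collector device: near each of the $N_n(K)$ net points one places $\ell_n = s_n m_n$ auxiliary points (possible because $K$ is perfect, disjoint from the auxiliary sets at other scales so the choices do not interact), with i.i.d.\ uniform draws from $S_n$, and $m_n$ is chosen so that $(1-1/s_n)^{m_n}\leq 1/(s_n N_n(K) 2^n)$. The paper's Statement 3.1 shows that even when each draw is shifted by an \emph{adapted} amount $y_i$ (which absorbs $g$ plus the cumulative contribution of the coarser-scale random layers), after $\ell_n$ draws one obtains a full $2^{-n}$-packing of size $s_n$ with failure probability small enough to union-bound over the $N_n(K)$ net points and to sum over $n$.

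Finally, the last paragraph of your proposal proposes to handle the translate $g$ by arguing that the oscillation of $\mathbf f$ at scale $r_k$ ``is larger than the total modulus of continuity contribution from $g$ at that scale for any $g$.'' This is false: $\omega_g(r)=o(1)$ as $r\to 0$, but it is \emph{not} $o(r)$; for a H\"older-$\alpha$ function with $\alpha<1$ one has $\omega_g(r)\asymp r^\alpha\gg r$, which swamps a random oscillation of size $\asymp r$. But there is in fact no need for a ``$g$-free'' event: the Haar-null definition only requires that for each fixed $g$ the bad set have $\mu$-measure $0$, not that a single full-measure event works for every $g$ simultaneously. The correct mechanism — and the place where the real technical work lives — is the repeated-trials lemma above, which handles any fixed shift sequence at once because the shifts only enter as the centers $y_i$ of translates of the grid $S_n$, and the coupon-collector argument is insensitive to where the grid sits.
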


In the proof of Theorem~\ref{t:GJMNOP} prevalence is witnessed by a measure supported on a one-dimensional subspace,
see also Theorem~\ref{t:Gea} and the subsequent discussion.
The proof (in the case of upper box dimension) uses
that if $K$ satisfies the connectivity condition of Theorem~\ref{t:GJMNOP} then for all $f,g\in C(K,\RR)$ we have
\begin{equation*}
\overline{\dim}_B \graph(f+g)\leq \max\{\overline{\dim}_B \graph(f),\overline{\dim}_B \graph(g)\}.
\end{equation*}
The next theorem shows that the above inequality is not true even for the triadic Cantor set $K$.
That is why in the proof of Theorem~\ref{t:grM} prevalence will be witnessed by a more complicated `infinite dimensional' measure.

\begin{theorem} \label{t:Cantor} Let $K\subset [0,1]$ be the triadic Cantor set. Then
there exist functions $f,g\in C(K,\RR)$ such that
\begin{equation*} \overline{\dim}_B \graph(f+g)> \max\{\overline{\dim}_B \graph(f),\overline{\dim}_B \graph(g)\}.
\end{equation*}
\end{theorem}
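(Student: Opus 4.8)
The plan is to exploit the self-similar structure of the triadic Cantor set $K$ and the fact that upper box dimension measures the worst-case scaling behaviour along a subsequence of scales, so that two functions can be "bad at different scales" and yet their sum is bad at all scales. Concretely, I would build $f$ and $g$ so that each is, at most scales, essentially locally constant on the level-$n$ pieces of $K$ (hence has graph of box dimension $\overline{\dim}_B K=\log 2/\log 3$), but so that $f$ oscillates wildly precisely on a sparse sequence of scales $n\in A$ and $g$ oscillates wildly precisely on a complementary sparse sequence $n\in B$, with $A\cup B$ containing arbitrarily long blocks of consecutive integers. Since on those blocks $f+g$ inherits the oscillation, $\overline{\dim}_B\graph(f+g)$ will be forced up to $\log 2/\log 3+1$, the maximum possible value for a graph over $K$.

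The key steps, in order, are as follows. First, fix the dyadic address system: write $K=\{\,x(\varepsilon):\varepsilon\in\{0,1\}^{\NN}\,\}$ via $x(\varepsilon)=\sum 2\varepsilon_i 3^{-i}$, and for each $n$ let $\mathcal{I}_n$ be the collection of the $2^n$ basic intervals of length $3^{-n}$ meeting $K$. Second, partition the scales: choose a strictly increasing sequence of "active windows" $[a_k,b_k]\subset\NN$ with $b_k-a_k\to\infty$ and $a_{k+1}>b_k+1$, and declare window $[a_k,b_k]$ to belong to $f$ if $k$ is odd and to $g$ if $k$ is even; let $A$ be the union of the $f$-windows and $B$ the union of the $g$-windows, so that $A\cup B$ contains arbitrarily long runs of consecutive integers. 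Third, define $f$ on each level-$n$ interval $I\in\mathcal{I}_n$ with $n$ the left endpoint of an $f$-window $[a_k,b_k]$ to be a rescaled "sawtooth": on the piece $K\cap I$ it runs linearly (in the address parameter) from $0$ up to height $3^{-a_k}$ and back, using the sub-intervals in $\mathcal{I}_{b_k}$ as the teeth; outside the $f$-windows let $f$ be constant on each $I\in\mathcal{I}_{b_k}$, interpolating continuously in the gaps of $K$. Do the same for $g$ on the $g$-windows. One must check (i) $f,g$ are well-defined continuous functions on $K$ (straightforward, since adjustments at window $k+1$ happen inside intervals much smaller than those touched at window $k$, and heights $3^{-a_k}\to 0$), and (ii) the covering count $N_\delta(\graph(f))$ at scale $\delta=3^{-n}$ is $O(2^n)$ whenever $n$ lies outside all $f$-windows — because there $f$ is constant on each $3^{-n}$-interval up to an error $\le 3^{-n}$ — so $\overline{\dim}_B\graph(f)=\log 2/\log 3$; symmetrically for $g$.

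Finally, for the sum: on an $f$-window $[a_k,b_k]$ with $k$ odd, $g$ is locally constant at scale $3^{-b_k}$ while $f$ executes a full sawtooth of height $3^{-a_k}$ using the $\mathcal{I}_{b_k}$-teeth, so within each $I\in\mathcal{I}_{a_k}$ the graph of $f+g$ (hence of $f+g$) spans a vertical extent $\asymp 3^{-a_k}$ and requires $\asymp 3^{b_k-a_k}\cdot 2^{a_k}$ boxes of side $3^{-b_k}$ just to cover it over that one interval, and there are $2^{a_k}$ such intervals; letting $b_k-a_k\to\infty$ appropriately (e.g.\ $b_k-a_k\sim a_k$) pushes the exponent $\frac{\log\bigl(2^{b_k}3^{b_k-a_k}\bigr)}{\log 3^{b_k}}$ up to $\frac{\log 2}{\log 3}+1$ along the subsequence $\delta=3^{-b_k}$. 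The same estimate on $g$-windows handles the even $k$. Since $A\cup B$ contains all large enough scales inside these windows, $\overline{\dim}_B\graph(f+g)=\log 2/\log 3+1>\log 2/\log 3=\max\{\overline{\dim}_B\graph(f),\overline{\dim}_B\graph(g)\}$, as required.

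The main obstacle I anticipate is bookkeeping the interaction of the two functions at the boundaries between consecutive windows: I must ensure that the "off-window" behaviour of $g$ (locally constant at a fine scale) really does not spoil the box-counting lower bound for $f+g$ on an $f$-window, and conversely that turning $f$ back to locally constant after window $[a_k,b_k]$ does not secretly create extra oscillation at scales in $B$. Choosing the windows with large multiplicative gaps ($a_{k+1}\gg b_k$) and heights decaying like $3^{-a_k}$ should decouple them cleanly, so that each window contributes its oscillation in isolation and the verification of the upper bounds $\overline{\dim}_B\graph(f),\overline{\dim}_B\graph(g)\le\log 2/\log 3$ reduces to the elementary estimate that a function constant on each $3^{-n}$-cell has graph covered by $O(2^n)$ cells of side $3^{-n}$.
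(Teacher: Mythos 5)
Your proposal has a fatal structural flaw that makes the approach unworkable, independent of the bookkeeping details.

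The central claim --- that $\overline{\dim}_B\graph(f)=\log 2/\log 3$ --- is justified only by bounding the covering count $N_n(\graph(f))$ at scales $n$ that lie \emph{outside} the $f$-windows. But upper box dimension is a $\limsup$, so it is dominated by the worst scales, namely those \emph{inside} the $f$-windows where you have deliberately made $f$ oscillate. There is no reason for the $\limsup$ to ignore those scales, and in fact it does not: during the window $[a_k,b_k]$ your sawtooth of height $3^{-a_k}$ built from $\mathcal{I}_{b_k}$-teeth already forces $N_{b_k}(\graph(f))$ to be comparable to the count you ascribe to $\graph(f+g)$ at that same scale. Indeed, and this is the deeper obstruction: your windows are \emph{disjoint} between $f$ and $g$, so at every scale at least one of the two functions is locally constant at that scale or finer. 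At a scale inside an $f$-window (or between windows), $g$ is constant on each $\mathcal{I}_n$-cell, so $f+g$ differs from $f$ over each such cell only by an additive constant, and hence $N_n(\graph(f+g))=N_n(\graph(f))$; symmetrically for $g$-window scales. Thus $N_n(\graph(f+g))\le\max\{N_n(\graph(f)),N_n(\graph(g))\}$ at \emph{every} scale $n$, which yields $\overline{\dim}_B\graph(f+g)\le\max\{\overline{\dim}_B\graph(f),\overline{\dim}_B\graph(g)\}$ --- exactly the opposite of what you need to prove. No choice of window lengths or heights can rescue this: the "bad at different scales" design guarantees the sum is never worse than the worse of the two summands at any fixed scale. (As a secondary point, your box count $2^{b_k}3^{b_k-a_k}$ at scale $3^{-b_k}$ is also off --- the per-column count is $(3/2)^{b_k-a_k}$, not $3^{b_k-a_k}$, so even on your own terms the exponent tends to at most $\tfrac{\log 12}{\log 9}\approx 1.13$ when $b_k\sim 2a_k$, not $\tfrac{\log 2}{\log 3}+1$.)

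The paper's construction avoids this by making both $f$ and $g$ oscillate \emph{at every scale simultaneously}, but in ``complementary directions'': on $K=\{\sum a_i3^{-i}:a_i\in\{0,1\}\}$ it sets $f(x)=\sum a_{2i-1}3^{-i}$ and $g(x)=\sum a_{2i}3^{-i}$. Each of $f,g$ individually has small graph (the map $h\mapsto(x_h,f(x_h))$ is essentially injective on the relevant digit blocks, giving $\overline{\dim}_B\graph(f)=\tfrac{\log 8}{\log 9}<1$), but $f+g$ maps each cell $K\cap[x_h,x_h+3^{-2n})$ \emph{onto} a full vertical interval of length $3^{-n}$ because $K+K=[0,1]$; the sum therefore acquires vertical extent that neither summand has, giving $\overline{\dim}_B\graph(f+g)=\tfrac12+\tfrac{\log 2}{\log 3}>1$. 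The crucial mechanism is an arithmetic interaction between the summands at the same scale, not a temporal handoff between disjoint scale ranges.
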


Note that if $K$ has infinitely many isolated points, then Theorem~\ref{t:grM} may not hold. For the following example see \cite{HLOPS}.

\begin{example}[Hyde et al.] \label{ex1} Let $K=\{0\}\cup \{1/n: n\in \NN^+\}$. Then
$$\sup_{f\in C(K,\RR)} \overline{\dim}_B \graph(f)\leq 1<3/2=\underline{\dim}_B K+1.$$
\end{example}

For packing dimension Balka, Darji, and Elekes \cite{BDE} proved the following.

\begin{theorem}[Balka-Darji-Elekes] \label{t:BDE} Assume that $m,d\in \NN^+$ and $K\subset \RR^m$ is
an uncountable compact set. Then for a prevalent $f\in C(K,\RR^d)$ we have
$$\dim_P \graph(f)=\dim_P K+d.$$
\end{theorem}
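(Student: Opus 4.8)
The plan is to deduce the theorem from the box-dimension result Theorem~\ref{t:grM} by a localization argument built on the description of packing dimension as the modified upper box dimension, $\dim_P X=\inf\{\sup_i\overline{\dim}_B X_i:X\subseteq\bigcup_i X_i\}$ over countable covers. First, the inequality $\dim_P\graph(f)\le\dim_P K+d$ holds for \emph{every} $f\in C(K,\RR^d)$: since $\graph(f)\subseteq K\times f(K)$ with $f(K)\subseteq\RR^d$ bounded, the product inequality for packing dimension gives $\dim_P\graph(f)\le\dim_P K+\dim_P f(K)\le\dim_P K+d$. So everything reduces to showing that the prevalent $f$ satisfies $\dim_P\graph(f)\ge\dim_P K+d$.

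Fix $0\le t<\dim_P K$ (when $\dim_P K=0$ the target is $\dim_P\graph(f)\ge d$, handled identically with $t=0$ after replacing $K$ by a Cantor subset, which exists since $K$ is uncountable and compact). By the regularization property of packing dimension there is a nonempty \emph{perfect} compact set $E\subseteq K$ with $\dim_P(E\cap U)>t$, and hence $\overline{\dim}_B(E\cap U)>t$, for every open $U$ meeting $E$: one may take $E=K\setminus W$, where $W$ is the union of all open sets $V$ with $\dim_P(K\cap V)\le t$, the countable stability of $\dim_P$ giving $\dim_P(K\cap W)\le t$ so $E\ne\emptyset$; and $E$ has no isolated point, since a one-point relatively open piece would have packing dimension $0\le t$. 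Fix a countable basis $\{U_n\}$ of $\RR^m$ and, for each $n$ with $U_n\cap E\ne\emptyset$, put $F_n=\overline{E\cap U_n}$, a nonempty perfect (hence uncountable) compact metric space. By Theorem~\ref{t:grM} the set $\{\phi\in C(F_n,\RR^d):\overline{\dim}_B\graph(\phi)=\overline{\dim}_B F_n+d\}$ is prevalent in $C(F_n,\RR^d)$; pushing a witnessing measure forward through a continuous linear extension operator $C(F_n,\RR^d)\to C(K,\RR^d)$ (Dugundji's extension theorem) one checks that the preimage of a prevalent set under the restriction map $C(K,\RR^d)\to C(F_n,\RR^d)$ is prevalent. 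Hence $\iG_n:=\{f\in C(K,\RR^d):\overline{\dim}_B\graph(f|_{F_n})=\overline{\dim}_B F_n+d\}$ is prevalent, and so is $\iG:=\bigcap_n\iG_n$, shy sets forming a $\sigma$-ideal.

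Now fix $f\in\iG$ and an open $U$ meeting $E$, and pick $n$ with $U_n\subseteq U$ and $U_n\cap E\ne\emptyset$. Since upper box dimension is invariant under closure and $\graph(f|_{F_n})=\overline{\graph(f|_{E\cap U_n})}$ by continuity of $f$,
\[
\overline{\dim}_B\graph(f|_{E\cap U})\ \ge\ \overline{\dim}_B\graph(f|_{E\cap U_n})\ =\ \overline{\dim}_B\graph(f|_{F_n})\ =\ \overline{\dim}_B F_n+d\ \ge\ t+d.
\]
A Baire category argument on the compact space $\graph(f|_E)$ now upgrades this to $\dim_P\graph(f|_E)\ge t+d$: in any countable cover $\graph(f|_E)=\bigcup_k A_k$ we may assume the $A_k$ closed, so some $A_{k_0}$ has nonempty interior relative to $\graph(f|_E)$ and therefore, by continuity of $f$, contains $\graph(f|_{E\cap U})$ for some open $U$ meeting $E$; hence $\overline{\dim}_B A_{k_0}\ge t+d$. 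Thus $\dim_P\graph(f)\ge\dim_P\graph(f|_E)\ge t+d$ for all $f\in\iG$, and intersecting over a sequence $t_j\nearrow\dim_P K$ produces a prevalent set on which $\dim_P\graph(f)\ge\dim_P K+d$. Together with the upper bound this proves the theorem.

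This argument is purely a reduction, resting entirely on Theorem~\ref{t:grM}. In the original proof of Balka--Darji--Elekes, which predates such a statement, the difficulty lies elsewhere: one must directly construct a Borel probability measure on $C(K,\RR^d)$ witnessing prevalence of the box-dimension lower bound, and by Theorem~\ref{t:Cantor} this measure cannot be taken supported on a one-dimensional subspace. The main obstacle is therefore to build a sufficiently rich `infinite-dimensional' random perturbation $g$ so that, for \emph{any} given $f$, almost every $f+g$ has oscillation bounded below in each of the $d$ coordinates on a positive proportion of the $\delta$-mesh cubes meeting $E$, for all small $\delta$ --- whence $N_\delta(\graph((f+g)|_E))\gtrsim N_\delta(E)\,\delta^{-d}$ and the graph picks up the extra $d$ in box dimension. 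What makes an almost-everywhere (rather than everywhere) statement attainable is that $f$ is uniformly continuous on the compact set $E$, so at fine scales it contributes only a negligible, uniformly controlled amount of oscillation, which the random $g$ merely has to overpower.
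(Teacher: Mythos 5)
Your proof is correct and takes essentially the same route as the paper's proof of the more general Theorem~\ref{t:grP}: regularize $K$ via Lemma~\ref{l:pack1} to get a perfect compact $E$ (your $E$ is the paper's $K_n$), apply Theorem~\ref{t:grM} to the closures of a countable relatively open basis, pull back through restriction maps (your Dugundji-pushforward argument is just a direct proof of the paper's Corollary~\ref{c:her}/Lemma~\ref{l:D}), and conclude with a Baire-category argument that is exactly Lemma~\ref{l:pack2}.
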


In Section~\ref{s:P} we generalize the above theorem based on Theorem~\ref{t:grM}.

\begin{theorem} \label{t:grP} Let $K$ be an uncountable compact metric space and let $d\in \NN^+$.
Then for a prevalent $f\in C(K,\RR^d)$ we have
$$\dim_P \graph(f)=\dim_P K+d.$$
\end{theorem}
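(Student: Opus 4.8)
\textbf{Proof proposal for Theorem~\ref{t:grP}.}

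The plan is to establish the packing dimension equality by proving the two inequalities separately, and to leverage Theorem~\ref{t:grM} for the harder lower bound. For the upper bound $\dim_P\graph(f)\leq \dim_P K+d$, I would note that this holds for \emph{every} $f\in C(K,\RR^d)$, not just the prevalent one: indeed $\graph(f)$ is the image of $K$ under the Lipschitz map $x\mapsto(x,f(x))$ only when $f$ is Lipschitz, so instead I would use the standard stability properties of packing dimension together with the fact that $\graph(f)$ is contained in a product-type set. More carefully, one covers $K$ by finitely many pieces on which $f$ has small oscillation (using uniform continuity), observes that each such piece contributes at most $\dim_P(\text{piece})+d$ to the packing dimension of the corresponding part of the graph via the countable stability and the behaviour of packing dimension under Lipschitz-like graph maps on small scales, and then uses countable stability of $\dim_P$ to conclude. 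This direction is routine and requires only that $K$ be compact metric.

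For the lower bound $\dim_P\graph(f)\geq \dim_P K+d$ on a prevalent set of $f$, the key observation is the well-known characterization of packing dimension in terms of upper box dimension: for any separable metric space $X$,
\[
\dim_P X=\inf\Bigl\{\sup_{n} \overline{\dim}_B X_n : X=\bigcup_{n\in\NN} X_n\Bigr\},
\]
where the infimum is over all countable covers of $X$ by (say, closed) subsets. Combined with the fact that $\dim_P K=\inf\{\sup_n\overline{\dim}_B K_n: K=\bigcup_n K_n\}$, the strategy is: fix a countable cover $K=\bigcup_n K_n$ by compact sets; then $\graph(f)=\bigcup_n \graph(f|_{K_n})$, so if for a prevalent $f$ we had $\overline{\dim}_B\graph(f|_{K_n})\geq \overline{\dim}_B K_n+d$ for every $n$ simultaneously, we could take infimum over covers and obtain the result. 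The obstruction is that Theorem~\ref{t:grM} requires $K_n$ to have at most finitely many isolated points, which need not hold for an arbitrary closed cover.

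The main obstacle, then, is handling isolated points, and the resolution is to choose the cover judiciously. Since $\dim_P K=\inf\{\sup_n \overline{\dim}_B K_n\}$, for each $\varepsilon>0$ I would pick a countable closed cover $K=\bigcup_n K_n$ with $\overline{\dim}_B K_n\leq \dim_P K+\varepsilon$ for all $n$; by further decomposing each $K_n$ I may assume each $K_n$ is either a single point (contributing trivially) or a perfect set, hence uncountable with \emph{no} isolated points — using that every uncountable Polish space is the union of a countable set and a perfect set (Cantor–Bendixson), and that a perfect compact metric space can be decomposed into finitely or countably many relatively clopen perfect pieces if needed. Applying Theorem~\ref{t:grM} to each perfect $K_n$ gives, for a prevalent $f\in C(K_n,\RR^d)$, that $\overline{\dim}_B\graph(f|_{K_n})\geq \overline{\dim}_B K_n+d$. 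The remaining point is to transfer prevalence in $C(K_n,\RR^d)$ back to prevalence in $C(K,\RR^d)$: I would use that restriction $C(K,\RR^d)\to C(K_n,\RR^d)$ is a continuous surjective linear map (by Tietze extension it has a continuous linear right inverse, or at least a section), so the preimage of a prevalent set is prevalent, and a countable intersection of prevalent sets is prevalent. Letting $\varepsilon\to0$ through a sequence and intersecting the resulting prevalent sets yields a prevalent set of $f$ with $\dim_P\graph(f)\geq \sup_n\overline{\dim}_B\graph(f|_{K_n})\geq \sup_n(\overline{\dim}_B K_n+d)\geq \dim_P K+d$ after optimizing the cover. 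Care is needed because the cover depends on $\varepsilon$, so the infimum-over-covers must be taken after fixing $f$; the clean way is to show that for the \emph{fixed} prevalent $f$ constructed above, \emph{every} countable closed cover $\graph(f)=\bigcup_m G_m$ has some $G_m$ with $\overline{\dim}_B G_m$ large, which follows because such a cover refines to a cover of each $K_n$ and the box dimension estimate on $\graph(f|_{K_n})$ survives intersection with the $G_m$'s on a scale-by-scale basis. Combining both inequalities completes the proof.
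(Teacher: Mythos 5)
Your overall plan — trivial upper bound via containment in a product, then reduce the lower bound to Theorem~\ref{t:grM} on compact subsets via restriction maps and Corollary~\ref{c:her}, with a Cantor–Bendixson reduction to perfect pieces — is in the right spirit, but the crucial step in the lower bound contains a genuine gap.

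The gap is this: you want to deduce $\dim_P\graph(f)\geq \dim_P K+d$ from $\overline{\dim}_B\graph(f|_{K_n})\geq\overline{\dim}_B K_n+d$ for a judiciously chosen cover $K=\bigcup_n K_n$. But this implication is false. A lower bound on the \emph{upper box dimension} of a set does not give a lower bound on its \emph{packing dimension}, since $\dim_P\leq\overline{\dim}_B$ with possibly strict inequality (already $\{0\}\cup\{1/n:n\geq1\}$ has $\overline{\dim}_B=1/2$ but $\dim_P=0$). Knowing that the specific cover $\{\graph(f|_{K_n})\}_n$ of $\graph(f)$ has pieces with large $\overline{\dim}_B$ says nothing about \emph{other} countable covers of $\graph(f)$, and the infimum in the definition of $\dim_P$ ranges over all of them. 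Your closing remark — that ``the box dimension estimate on $\graph(f|_{K_n})$ survives intersection with the $G_m$'s on a scale-by-scale basis'' — is precisely the assertion that needs proof and, stated at this level of generality, is false: upper box dimension can collapse under countable decomposition, which is the whole reason packing dimension is defined the way it is.

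The paper closes this gap with two lemmas on packing dimension that your argument lacks. Lemma~\ref{l:pack1} (Mattila–Mauldin, Falconer–Howroyd): if $\dim_P K>s$, there is a compact $C\subset K$ with $\dim_P(C\cap U)\geq s$ for every open $U$ meeting $C$ — a compact subset with \emph{locally} full packing dimension. Lemma~\ref{l:pack2} (Tricot): if $\overline{\dim}_B(K\cap U)\geq s$ for every non-empty open $U\subset K$, then $\dim_P K\geq s$ — the converse passage from local box estimates to a packing lower bound. The proof then passes to such a $C=K_n$, takes a countable basis $\{U_i\}$ of $K_n$ with closures $C_i$ (each perfect, each with $\overline{\dim}_B C_i>s_n$), applies Theorem~\ref{t:grM} to get $\overline{\dim}_B\graph(f|_{C_i})\geq s_n+d$ for prevalent $f$, and finally uses Lemma~\ref{l:pack2} on $\graph(f)$: every non-empty relatively open $V\subset\graph(f)$ contains some $\graph(f|_{C_i})$ and hence has $\overline{\dim}_B V\geq s_n+d$. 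This is what correctly converts local upper box estimates on subgraphs into a packing dimension lower bound on the whole graph. Without something playing the role of Lemma~\ref{l:pack1}, your choice of cover of $K$ cannot control arbitrary covers of $\graph(f)$, and the argument does not go through.
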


Fraser and Hyde \cite{FH} showed that the graph of a prevalent $f\in C[0,1]$ has maximal Hausdorff dimension.
This improves the analogous results concerning box and packing dimension, see Fact~\ref{f:<}.

\begin{theorem}[Fraser-Hyde] \label{t:FH} For a prevalent $f\in C[0,1]$ we have
$$\dim_H \graph(f)=2.$$
\end{theorem}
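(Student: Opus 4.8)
\textbf{Proof proposal for Theorem~\ref{t:FH}.}

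The plan is to follow the method of Fraser and Hyde adapted to the prevalence setting: to witness prevalence one exhibits a single compactly supported Borel probability measure $\mu$ on $C[0,1]$ such that for \emph{every} $g\in C[0,1]$, the set of $t$ with $\dim_H\graph(g+f_t)<2$ has $\mu$-measure zero for a suitable parametrization $f_t$ of the support. The natural candidate is to take $\mu$ to be the distribution of a random function $F=\sum_{n} a_n \varphi_n$ built from a lacunary-type series (for instance a Weierstrass-type function $\sum b^{-\alpha n}\cos(b^n \pi x + \Theta_n)$ with independent uniform phases $\Theta_n$, or a random Takagi/Knopp-type construction), chosen so that the graph of $F$ itself has Hausdorff dimension $2$ almost surely, robustly under translation by any fixed continuous $g$. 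The key point, exactly as in \cite{FH}, is that adding a fixed continuous function does not decrease the Hausdorff dimension of the graph when the random function oscillates sufficiently wildly at all scales.

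The concrete steps I would carry out are: (1) Reduce the statement to a prevalence statement: it suffices to produce a Borel probability measure $\mu$ on $C[0,1]$, which we may take to be supported on a compact set, such that $\mu(\{f : \dim_H\graph(g+f)<2\})=0$ for every $g\in C[0,1]$; since $\dim_H\graph(h)\le 2$ always, this gives that the prevalent $f$ has $\dim_H\graph(f)=2$. (2) Define $\mu$ as the law of a random continuous function $F_\omega$ depending on a sequence of independent random parameters (phases or signs), with a rapidly growing frequency sequence $(b_n)$ and amplitudes tuned so the ``box-counting at scale $b_n^{-1}$'' heuristic yields dimension approaching $2$. (3) Fix $g\in C[0,1]$ and show that for $\mu$-a.e.\ $\omega$, $\dim_H\graph(g+F_\omega)=2$: estimate, for a dyadic interval $I$ of length $\delta$, the oscillation of $g+F_\omega$ on $I$ from below using the dominant high-frequency term (whose oscillation on $I$ is $\gg$ the oscillation of $g$ plus all lower-frequency contributions, by lacunarity), and use a Frostman/mass-distribution argument: put a natural measure on $\graph(g+F_\omega)$ (the pushforward of Lebesgue measure under $x\mapsto(x,g(x)+F_\omega(x))$) and bound its local dimension from below, or alternatively use the potential-theoretic lower bound from \cite{FH} directly. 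The randomness and lacunarity are what make the lower oscillation bound hold \emph{simultaneously at all small scales} almost surely (Borel--Cantelli), uniformly over the fixed $g$.

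The main obstacle I expect is Step (3): proving that a single measure works for \emph{all} $g$ at once, rather than for a.e.\ $g$ or for $g$ in some residual set. The resolution is that the needed oscillation estimate for $g+F_\omega$ on an interval $I$ of length $\delta=b_n^{-1}$ only uses that $g$ is continuous — hence $\mathrm{osc}(g,I)\to 0$ uniformly as $\delta\to 0$ — together with the fact that the $n$-th term of $F_\omega$ contributes oscillation of order $b_n^{-\alpha n}\cdot b_n\cdot\delta$-type growth that, for the right choice of parameters and for $\mu$-a.e.\ $\omega$, dominates both $\mathrm{osc}(g,I)$ and $\sum_{k\ne n}\mathrm{osc}(k\text{-th term},I)$. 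So the estimate is genuinely uniform in $g$, and the only $g$-independent randomness is the almost-sure validity of the all-scales lower bound, handled once and for all by Borel--Cantelli. A secondary technical point is verifying $\mu$ is a Borel probability measure on $C[0,1]$ (tightness of the random series in the sup norm, which follows from summability of the amplitudes or a standard modulus-of-continuity estimate) so that Definition~\ref{d:shy} genuinely applies; this is routine.
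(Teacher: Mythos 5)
The paper quotes Theorem~\ref{t:FH} as a known result of Fraser and Hyde and does not prove it directly; the proof it gives is of the generalisation, Theorem~\ref{t:H}, in Section~\ref{s:H}, which specialises to Theorem~\ref{t:FH} on taking $K=[0,1]$, $d=1$. Your high-level framework -- build a Borel probability measure $\mu$ as the law of a random continuous $F$, fix $g$, and show $\dim_H\graph(g+F)$ is large $\mu$-a.s.\ via Fubini and a potential-theoretic estimate -- is the right one and matches the paper's.

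The genuine gap is in your Step~(3). The estimate you actually describe, namely that a dominant high-frequency lacunary term out-oscillates $g$ and all lower-order terms on an interval of the corresponding length, is a \emph{box-dimension} argument. It can yield $\overline{\dim}_B\graph(g+F)\ge 2-\alpha$ but says nothing about $\dim_H$, and that gap is not cosmetic: the Hausdorff dimension of Weierstrass-type graphs was a notoriously hard problem precisely because oscillation estimates control box dimension and nothing more. You do mention, as an alternative, ``the potential-theoretic lower bound from \cite{FH}'', but that bound is established in \cite{FH} for \emph{their} construction, not for a randomly-phased trigonometric series, and it cannot be transplanted. To run the energy method you must control, for each fixed pair $x\ne y$, the full \emph{distribution} of the random increment $F(x)-F(y)$, not merely its range; for uniform phases this increment is a sum of independent scaled arcsine random variables whose density requires nontrivial work to estimate, and your proposal never supplies that estimate. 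There is also a smaller logical slip: a fixed Weierstrass function with amplitude exponent $\alpha>0$ has graph dimension at most $2-\alpha<2$, so no single measure of the type you describe can make $\dim_H\graph(g+f)=2$ hold $\mu$-a.s.\ -- one must fix $t<1$, build a measure witnessing $\dim_H\graph\ge t+1$ prevalently, and intersect over a countable sequence $t_k\nearrow 1$, as the paper does via the reduction ``fix $0<t<s<\dim_H K$''.

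For contrast, the paper sidesteps the hard Weierstrass estimate with a different random function. After reducing to a Cantor subspace, it builds a nested hierarchy of disjoint compact pieces $K_{i_1\dots i_n}$ and sets $F=\sum_n f_n$ with $f_n$ \emph{constant on each level-$n$ piece}, the level-$n$ values drawn i.i.d.\ from $\{0,2^{-n}\}^d$. If $n=n(x,y)$ is the last level at which $x,y$ share a piece, then $F(x)-F(y)$ is \emph{exactly} the difference of two independent random variables uniform on $[0,2^{-n}]^d$. This explicit increment distribution is what makes the energy estimate tractable: Lemma~\ref{l:pq} bounds the resulting double integral, Statement~\ref{st:0} gives
$\EE\bigl[(\rho(x,y)^2+|(F+g)(x)-(F+g)(y)|^2)^{-(t+d)/2}\bigr]\le c\,\rho(x,y)^{-s}$,
and Fubini together with $I_s(\nu)<\infty$ yields $\EE\,I_{t+d}(\nu_F)<\infty$. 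That probabilistic estimate is precisely the ingredient your proposal is missing, and the oscillation argument cannot substitute for it.
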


The following generalization is due to Bayart and Heurteaux \cite{BH}.

\begin{theorem}[Bayart-Heurteaux] \label{t:BH} Let $K\subset \RR^m$ be compact with $\dim_H K>0$. Then for a prevalent $f\in C(K,\RR)$ we have
$$\dim_H \graph(f)=\dim_H K+1.$$
\end{theorem}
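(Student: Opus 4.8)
The plan is to prove the two inequalities separately. The upper bound $\dim_H\graph(f)\le\dim_H K+1$ holds for \emph{every} $f\in C(K,\RR)$: since $f$ is continuous on the compact set $K$, $\graph(f)\subset K\times[a,b]$ for a bounded interval $[a,b]$, and the product inequality $\dim_H(A\times B)\le\dim_H A+\overline{\dim}_B B$ gives $\dim_H\big(K\times[a,b]\big)\le\dim_H K+1$. So everything reduces to showing that $\iB:=\{f\in C(K,\RR):\dim_H\graph(f)<\dim_H K+1\}$ is shy. Fixing sequences $s_n\uparrow\dim_H K$ and $\alpha_k\downarrow0$, we have $\iB=\bigcup_{n,k}\iB_{n,k}$ with $\iB_{n,k}:=\{f:\dim_H\graph(f)<s_n+1-\alpha_k\}$, each of which is Borel by a standard argument. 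Since shy sets form a $\sigma$-ideal, it suffices to fix $s<\dim_H K$ and $\alpha\in(0,1)$ and to exhibit \emph{one} Borel probability measure $\mu=\mu_\alpha$ on $C(K,\RR)$ with $\mu\big(\{g:\dim_H\graph(f+g)<s+1-\alpha\}\big)=0$ for \emph{every} $f\in C(K,\RR)$; as $-f$ ranges over $C(K,\RR)$ this gives $\mu(\iB_{n,k}+x)=0$ for all $x$, i.e.\ $\iB_{n,k}$ is shy.

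Following the method of Fraser and Hyde, I would take $\mu$ to be the law of the random function $g_\omega(x)=\sum_{j\ge1}\sum_{i=1}^{m}2^{-j\alpha}\,\xi_{j,i}\,\psi\big(2^{j}\ell_i(x)+\theta_{j,i}\big)$, where $\ell_1,\dots,\ell_m$ are the coordinate functionals on $\RR^m$, $\psi$ is a fixed $1$-periodic triangle wave of amplitude $1$, and $(\xi_{j,i})$, $(\theta_{j,i})$ are independent families with $\xi_{j,i}$ standard Gaussian and $\theta_{j,i}$ uniform on $[0,1]$. Since $\sum_{j,i}2^{-j\alpha}\EE|\xi_{j,i}|<\infty$, the series converges uniformly almost surely, so $g_\omega\in C(K,\RR)$ a.s.; splitting the sum at the scale $2^{-j}\asymp|x-y|$ and using that the $j$-th block is $O(2^{j})$-Lipschitz and bounded by $O(1)$ shows $|g_\omega(x)-g_\omega(y)|\lesssim|x-y|^{\alpha}$, so each $g_\omega$ is $\alpha$-H\"older. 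The key estimate --- which I expect to be the main obstacle --- is that there is $C=C(\alpha,m)$ such that for all $x\neq y$ in $K$ the random variable $Z_{x,y}:=g_\omega(x)-g_\omega(y)$ has a density bounded by $C|x-y|^{-\alpha}$. Conditionally on all phases $\theta_{j,i}$, $Z_{x,y}$ is Gaussian with variance $\sigma^2=\sum_{j,i}2^{-2j\alpha}v_{j,i}(\theta_{j,i})^2$, where $v_{j,i}(\theta)=\psi(2^j\ell_i(x)+\theta)-\psi(2^j\ell_i(y)+\theta)$, so its density is at most $(2\pi)^{-1/2}\sigma^{-1}$ and one must show $\EE_\theta[\sigma^{-1}]\lesssim|x-y|^{-\alpha}$. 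The point is that for $\rho=|x-y|$ there is a coordinate $i^*$ with $|\ell_{i^*}(x)-\ell_{i^*}(y)|\asymp\rho$ and a window of $O(1)$ consecutive scales $j\asymp\log_2(1/\rho)$ for which $2^{j}|\ell_{i^*}(x)-\ell_{i^*}(y)|$ is bounded away from $0$ and from the integers, making each $v_{j,i^*}(\theta_{j,i^*})$ a non-degenerate piecewise-linear function of $\theta_{j,i^*}$ with $\PP(|v_{j,i^*}(\theta)|<\delta)\lesssim\delta$; since $\sigma^2$ then dominates $\rho^{2\alpha}$ times a sum of $\ge3$ independent squared terms of this kind, $\EE_\theta[\sigma^{-1}]\lesssim\rho^{-\alpha}\,\EE[(V_1+V_2+V_3)^{-1/2}]<\infty$. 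Handling the ``resonant'' pairs, where several coordinate differences are dyadically aligned so that one must pass to a coarser window of scales, is the technical heart; in the general metric setting (needed for the generalization in the abstract) one replaces the functions $\psi(2^j\ell_i(\cdot)+\theta_{j,i})$ by tent functions supported on a cover of $K$ by balls of radius $2^{-j}$ with independent Gaussian amplitudes, and the same scheme applies.

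Granting this estimate, the rest is a routine potential-theoretic computation. Fix $f\in C(K,\RR)$. Since $s<\dim_H K$, Frostman's lemma gives a Borel probability measure $\nu$ on $K$ with finite energy $I_s(\nu)=\iint|x-y|^{-s}\,d\nu(x)\,d\nu(y)<\infty$, hence $I_r(\nu)<\infty$ for every $r\le s$ as $K$ is bounded. Let $\lambda=\lambda_{f,\omega}$ be the image of $\nu$ under $x\mapsto(x,f(x)+g_\omega(x))$, a probability measure carried by $\graph(f+g_\omega)$. For $1<t<s+1-\alpha$, Tonelli's theorem gives
\[
\EE_\omega\,I_t(\lambda_{f,\omega})=\iint\EE_\omega\Big[\big(|x-y|^2+(f(x)-f(y)+Z_{x,y})^2\big)^{-t/2}\Big]\,d\nu(x)\,d\nu(y),
\]
and, writing $\rho=|x-y|$, the inner expectation is at most $\|p_{Z_{x,y}}\|_\infty\int_{\RR}(\rho^2+u^2)^{-t/2}\,du=\kappa_t\,\|p_{Z_{x,y}}\|_\infty\,\rho^{\,1-t}\le C\kappa_t\,\rho^{\,1-t-\alpha}$, uniformly in the shift $f(x)-f(y)\in\RR$ by translation invariance of the $u$-integral (here $\kappa_t=\int_\RR(1+v^2)^{-t/2}\,dv<\infty$ since $t>1$). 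Therefore $\EE_\omega I_t(\lambda_{f,\omega})\le C\kappa_t\,I_{t+\alpha-1}(\nu)<\infty$ because $t+\alpha-1<s$, so $I_t(\lambda_{f,\omega})<\infty$ for $\mu$-a.e.\ $\omega$, which forces $\dim_H\graph(f+g_\omega)\ge t$ by the energy form of Frostman's lemma. Letting $t\uparrow s+1-\alpha$ along a countable sequence yields $\dim_H\graph(f+g_\omega)\ge s+1-\alpha$ for $\mu$-a.e.\ $\omega$; since this holds for every $f$ with the single measure $\mu=\mu_\alpha$, the set $\iB_{n,k}$ is shy, and letting $s\uparrow\dim_H K$ and $\alpha\downarrow0$ completes the proof. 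The hypotheses $K\subset\RR^m$ and $\dim_H K>0$ enter exactly here: the former through the coordinate functionals in the construction of $g_\omega$, and the latter because the estimate above is vacuous unless we can take $t>1$, i.e.\ $s>\alpha$.
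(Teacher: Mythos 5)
Your overall skeleton --- upper bound from the product estimate, reduction to exhibiting a single witnessing measure $\mu$, potential-theoretic lower bound via Frostman's lemma and $\EE_\omega I_t(\lambda_{f,\omega})<\infty$ --- matches the strategy the paper uses in Section~\ref{s:H} to prove the more general Theorem~\ref{t:H} (the paper only \emph{cites} Theorem~\ref{t:BH}; its own proof is of the generalization). The final Fubini computation and the role of Frostman's lemma are essentially identical to the paper's. What differs, and where the gap lies, is the construction of the witnessing measure and the resulting density estimate.

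You build $\mu$ from a random Weierstrass-type series with Gaussian amplitudes, triangle waves, and random phases, and you then need the unconditional density of $Z_{x,y}=g_\omega(x)-g_\omega(y)$ to be $\lesssim |x-y|^{-\alpha}$. As you say yourself, the bound $\EE_\theta[\sigma^{-1}]\lesssim\rho^{-\alpha}$ hinges on finding, for \emph{every} pair $x\ne y$, enough scales $j\asymp\log_2(1/\rho)$ where $2^j|\ell_{i^*}(x)-\ell_{i^*}(y)|$ stays away from $\ZZ$ so that the piecewise-linear quantities $v_{j,i^*}(\theta_{j,i^*})$ are non-degenerate; the difference of two shifted triangle waves at a uniform phase has atoms when the shift is near an integer, so this non-degeneracy is genuinely delicate and you have not supplied it. That is a real gap, and it is exactly what the Bayart--Heurteaux route (which uses fractional Brownian motion, so $\sigma$ is deterministic and explicitly comparable to $\rho^\alpha$) and the paper's Fraser--Hyde route are each engineered to avoid.

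The paper's construction is much more elementary and dodges the issue entirely. It partitions a Cantor subset of $K$ into nested compact pieces $K_{i_1\dots i_n}$ of diameter $\leq 2^{-n^2}$, assigns each level-$n$ piece an independent offset uniform on $S_n=\{0,2^{-n}\}^d$, and sums. Because diameters shrink superexponentially while offsets shrink only like $2^{-n}$, for any $x\ne y$ the tail $\sum_{i>n(x,y)}(f_i(x)-f_i(y))$ is exactly $X-Y$ with $X,Y$ independent and uniform on $[0,2^{-n}]^d$ --- no conditioning, no small-ball estimate, no resonances. The integral bound you want then reduces to the purely deterministic Lemma~\ref{l:pq} ($\int\!\!\int_{[0,p]^{2d}}(q^2+|\alpha-\beta+\theta|^2)^{-u}\,\mathrm{d}\alpha\,\mathrm{d}\beta\leq c_1p^dq^{d-2u}$), and the factor $2^{nd}\leq c_4\rho(x,y)^{t-s}$ coming from $\diam K_{i_1\dots i_n}\leq 2^{-n^2}$ replaces your H\"older/density argument. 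If you want to complete your proof along the Gaussian/Weierstrass line you will have to carry out the resonant-scale case analysis carefully (or switch to fBm as Bayart--Heurteaux do); if you simply want a clean proof, the paper's discrete construction is the easier route, and it has the further advantage of not needing $K\subset\RR^m$ at all.
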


\begin{remark} Recently Peres and Sousi \cite{PS} proved a stronger result for compact sets $K\subset \RR$.
Let $X\colon K\to \RR^d$ be a fractional Brownian motion restricted to $K$ and let $f\in C(K,\RR^d)$ be given.
In \cite{PS} the almost sure Hausdorff dimension of $\graph(X+f)$ is determined in terms of $f$ and the Hurst index of $X$.
\end{remark}

The proof of Theorem~\ref{t:BH} is based on the energy method, see \cite[Theorem~3]{BH}. A lower estimate for the Hausdorff
dimension of $\graph(X+f)$ is given there, where $X\colon K\to \RR$ is a fractional Brownian motion restricted to $K\subset \RR^m$ and
$f\in C(K,\RR)$ is a continuous drift. In fact, the proof easily extends to vector valued functions,
and (as pointed out in \cite{BFFH}) Dougherty's result on images handles the case $\dim_H K=0$, see Theorem~\ref{t:D}.
These yield the following theorem.

\begin{theorem}\label{t:gr} Assume that $m,d\in \NN^+$ and $K\subset \RR^m$ is an uncountable compact set.
Then for a prevalent $f\in C(K,\RR^d)$ we have
$$\dim_H \graph(f)=\dim_H K+d.$$
\end{theorem}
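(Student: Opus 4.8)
The plan is to deduce Theorem~\ref{t:gr} from the results already established in the paper together with the cited estimates, reducing everything to the case of a compact set sitting in Euclidean space. Since $K\subset\RR^m$ is an uncountable compact set, we always have the easy upper bound $\dim_H\graph(f)\le\dim_H K+d$ for \emph{every} $f\in C(K,\RR^d)$: the graph map $x\mapsto(x,f(x))$ is Lipschitz when $f$ is Lipschitz, and for general continuous $f$ one covers $\graph(f)$ by boxes adapted to a modulus of continuity, or more cleanly one uses $\dim_H\graph(f)\le\overline{\dim}_B\graph(f)$ together with Fact~\ref{f:<}. So the entire content is the lower bound for a prevalent $f$, and it suffices to produce, for each $s<\dim_H K+d$, a Borel probability measure on $C(K,\RR^d)$ witnessing that $\{f:\dim_H\graph(f)<s\}$ is shy.

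The main step is to invoke the energy-method estimate of Bayart and Heurteaux. Fix a fractional Brownian motion $X\colon K\to\RR^d$ with Hurst index $H\in(0,1)$ (taking $d$ independent scalar fractional Brownian fields), restricted to $K$, and let $\mu_X$ be its law, a Borel probability measure on $C(K,\RR^d)$. The assertion of \cite[Theorem~3]{BH}, extended to vector-valued targets exactly as indicated in the excerpt, is that for \emph{every} $f\in C(K,\RR^d)$ one has, almost surely,
\[
\dim_H\graph(X+f)\ \ge\ \min\Bigl\{\dim_H K+d,\ \frac{\dim_H K}{H}\Bigr\},
\]
the proof proceeding by constructing a suitable mass distribution on $\graph(X+f)$ and bounding its $t$-energy in expectation via the Gaussian correlation structure of $X$ (the drift $f$ only contributes a bounded perturbation, which is why it drops out of the energy bound). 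Now let $s<\dim_H K+d$ be arbitrary. If $\dim_H K>0$, choose $H\in(0,1)$ small enough that $\dim_H K/H\ge s$; then $\mu_X$-almost every $g$ has $\dim_H\graph(g+f)\ge s$ for each fixed $f$, i.e.\ $\mu_X\bigl(\{g:\dim_H\graph(g+f)<s\}\bigr)=0$ for all $f$, which is precisely the statement that $\{h:\dim_H\graph(h)<s\}$ is shy. Taking a countable sequence $s_n\uparrow\dim_H K+d$ and using that shy sets form a $\sigma$-ideal gives $\dim_H\graph(f)\ge\dim_H K+d$ for a prevalent $f$.

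The remaining case is $\dim_H K=0$, where the energy estimate degenerates (it only yields $\dim_H\graph(g+f)\ge 0$). Here the plan is to use Dougherty's result on prevalent images (Theorem~\ref{t:D} in the paper): since $K$ is uncountable compact, for a prevalent $f\in C(K,\RR^d)$ the image $f(K)$ contains a set of positive $d$-dimensional measure, hence has $\dim_H f(K)=d$; as $\graph(f)$ projects onto $f(K)$ under a Lipschitz map, $\dim_H\graph(f)\ge\dim_H f(K)=d=\dim_H K+d$. Combining the two cases with the universal upper bound finishes the proof. The only genuine obstacle is bookkeeping: one must confirm that the Bayart--Heurteaux energy computation really does go through verbatim for $\RR^d$-valued fractional Brownian motion with the drift still factoring out of the expectation bound, and that the measure $\mu_X$ is honestly a Borel probability measure on the separable Banach space $C(K,\RR^d)$ — both are routine but need to be stated; beyond that, the argument is purely an assembly of the cited theorems with a $\sigma$-ideal limiting argument.
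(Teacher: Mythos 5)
Your proposal is exactly the paper's route: Theorem~\ref{t:gr} is presented there as an immediate consequence of the vector-valued extension of Bayart and Heurteaux's energy estimate (Theorem~\ref{t:BH}) together with Dougherty's theorem on prevalent images (Theorem~\ref{t:D}) to handle $\dim_H K=0$, assembled via the $\sigma$-ideal of shy sets, just as you lay out. Two small inaccuracies that do not affect the argument: the Bayart--Heurteaux lower bound actually reads $\min\{\dim_H K + d(1-H),\ \dim_H K/H\}$ rather than $\min\{\dim_H K + d,\ \dim_H K/H\}$ (harmless since you send $H\to 0$), and the clean upper bound is Fact~\ref{f:prod} applied to $\graph(f)\subset K\times[a,b]^d$, not Fact~\ref{f:<}, which by itself only says $\dim_H\le\overline{\dim}_B$ and gives no bound in terms of $\dim_H K$.
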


Balka, Darji, and Elekes proved in \cite{BDE} that the condition $K\subset \RR^m$ is superfluous.

\begin{theorem}[Balka-Darji-Elekes] \label{t:H}
Let $K$ be an uncountable compact metric space and let $d\in \NN^+$.
Then for a prevalent $f\in C(K,\RR^d)$ we have
$$\dim_H \graph(f)=\dim_H K+d.$$
\end{theorem}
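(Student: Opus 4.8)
The plan is to reduce the general metric‐space statement, Theorem~\ref{t:H}, to the Euclidean case already recorded as Theorem~\ref{t:gr}, combined with the box dimension result Theorem~\ref{t:grM} which we have at our disposal. The key structural fact is that the Hausdorff dimension of the graph interacts well with the decomposition $K = K_0 \cup K_1$, where $K_0$ is the (countable) set of isolated points of $K$ together with suitable small pieces, and $K_1$ is a ``nice'' part; more useful is to split according to a result of the following flavour: every uncountable compact metric space $K$ contains a nonempty compact subset $K'$ with no isolated points such that $\dim_H K' $ approximates $\dim_H K$ from below (indeed one can take $\dim_H K' = \dim_H K$ by a standard argument, since removing the countably many isolated points does not decrease Hausdorff dimension, and a perfect compact subset of full dimension exists). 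So without loss of generality we may assume $K$ is perfect.

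The main idea I would follow (this is the ``method of Fraser and Hyde'' the abstract alludes to) is: prevalence is witnessed by a single probability measure $\mu$ on $C(K,\RR^d)$, and it suffices to exhibit, for each $\varepsilon > 0$, such a $\mu$ so that for $\mu$-a.e.\ $g$ and every $f \in C(K,\RR^d)$ we have $\dim_H \graph(f+g) \ge \dim_H K + d - \varepsilon$; the reverse inequality $\dim_H \graph(f) \le \dim_H K + d$ holds for every $f$ trivially (it is in Fact~\ref{f:<}, the graph being a Lipschitz image argument in each coordinate direction, giving $\dim_H\graph(f)\le \dim_H K + d$). For the lower bound, first handle the case $\dim_H K = 0$ separately: here we only need $\dim_H \graph(f) \ge d$, and this follows because $\graph(f+g)$ projects onto $(f+g)(K) \subset \RR^d$, so it suffices to know that the prevalent $g$ has $\dim_H g(K) = d$ — but this is exactly Dougherty's theorem on images (Theorem~\ref{t:D}, referenced in the excerpt), which holds for arbitrary uncountable compact metric $K$. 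So assume $\dim_H K = s > 0$.

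For $s > 0$, fix $s' < s$. Since $K$ is an uncountable compact metric space with $\dim_H K = s$, there is a compact $\tilde K \subset K$ and a probability measure $\nu$ on $\tilde K$ with finite $t$-energy for all $t < s'$ (Frostman). The construction of $\mu$ on $C(K,\RR^d)$ should be modelled on a ``random fractal graph'' built by a Fraser–Hyde-type iterated perturbation: one builds $g$ as an almost surely convergent series $g = \sum_k g_k$ where $g_k$ are independent, uniformly small continuous maps supported so that on a combinatorially chosen tree of small balls of $K$ (balls of radius $\approx \rho_k$, with $\rho_k \to 0$ fast) the increments $g_k$ oscillate in $\RR^d$ by an amount comparable to $\rho_k$. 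One then shows, via a potential-theoretic (energy) estimate applied to the pushforward of $\nu$ under $x \mapsto (x, f(x)+g(x))$, that for $\mu$-a.e.\ $g$ and \emph{every} continuous $f$ the energy $\int\!\int |(x,f(x)+g(x)) - (y,f(y)+g(y))|^{-t}\, d\nu(x)\,d\nu(y)$ is finite for $t < s' + d - \varepsilon$. The crucial gain over the deterministic situation is that the random vertical increments of $g$ decorrelate the effect of $f$: conditionally on the relevant portion of $g$, the vertical separation $|f(x)+g(x) - f(y)+g(y)|$ has a density bounded above near $0$, so the expectation of the reciprocal power is controlled uniformly in $f$. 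Summing over scales and using $\dim_H(\graph) \ge t$ whenever such a finite-energy measure exists gives the bound.

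I expect the main obstacle to be precisely this uniformity over all $f \in C(K,\RR^d)$ in the energy estimate, since $f$ ranges over an infinite-dimensional non-compact set and a priori the ``bad'' set of $g$ could depend on $f$. The remedy — and the technical heart — is to discretize: choose the scales $\rho_k$ and the magnitudes of $g_k$ so that on each ball of the $k$-th generation the oscillation of any fixed continuous $f$ is, at the relevant scale, negligible compared to the random increment of $g$ (using uniform continuity of $f$ on the compact $K$), so that one can effectively replace $f$ by a locally constant function with finitely many values at each stage and take a union bound. One must also take care with the overlaps of the dyadic-type balls in a general metric space (there is no exact self-similar grid), which is where the structure theory of box dimension from Section~\ref{s:M} — and the fact that $K$ has finitely many isolated points is \emph{not} needed here, only uncountability — comes in to produce a sufficiently regular ``net'' at each scale. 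Once the finite-energy measure is produced on $\graph(f+g)$ for $\mu$-a.e.\ $g$, uniformly in $f$, Fubini gives that the set $\{g : \exists f,\ \dim_H\graph(f+g) < s'+d-\varepsilon\}$ is contained in a set we have just shown to be $\mu$-null after intersecting with a countable dense family of $f$'s and using continuity; letting $s' \uparrow s$ and $\varepsilon \downarrow 0$ along a sequence and intersecting the (still prevalent) complements finishes the proof.
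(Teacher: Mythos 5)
Your proposal reverses the quantifiers in the definition of prevalence, and this reversal breaks the argument. To show that $\iA=\{h:\dim_H\graph(h)\geq t+d\}$ is prevalent, one must exhibit a measure $\mu$ with $\mu(\iA-g)=1$ for \emph{every fixed translate} $g\in C(K,\RR^d)$, i.e.\ for each $g$, for $\mu$-a.e.\ random $f$, $\dim_H\graph(f+g)\geq t+d$. You instead aim to prove that for $\mu$-a.e.\ random $g$, \emph{for every} $f\in C(K,\RR^d)$, $\dim_H\graph(f+g)\geq \dim_H K+d-\varepsilon$. That statement is false for any measure $\mu$: given any $g$, take $f=-g$, so $f+g=0$ and $\dim_H\graph(f+g)=\dim_H K<\dim_H K+d-\varepsilon$ when $\varepsilon<d$. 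Consequently, the \textquotedblleft main obstacle\textquotedblright\ of uniformity over $f$ that you identify, and the discretization/union-bound scheme you propose to overcome it, are attempts to repair a step that is not needed and cannot be made true. The actual proof, with quantifiers in the correct order, requires no uniformity at all: one fixes $g$, constructs a single random $f=\sum_n f_n$ with i.i.d.\ small increments on the cells of a Cantor scheme in $K$, and computes $\EE I_{t+d}(\nu_f)<\infty$ directly (via the uniform conditional density bound of Lemma~\ref{l:pq} applied with $\theta=g(x)-g(y)$). The resulting null set of $f$'s certainly depends on $g$, and that is permitted.

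A second, smaller gap: replacing $K$ by the perfect set of its condensation points is not enough for the construction; one needs a disjoint tree of compact pieces at each level whose union is all of $K$ and whose diameters decay superexponentially, which is cleanest when $K$ is a Cantor space. The paper therefore invokes Keesling's theorem that every uncountable compact metric space contains a Cantor subspace of the same Hausdorff dimension, and then passes to it via Corollary~\ref{c:her}; \textquotedblleft perfect\textquotedblright\ alone does not furnish the needed clopen-like partition. Also note that your closing appeal to a countable dense family of $f$'s and \textquotedblleft continuity\textquotedblright\ would not work even if the quantifier issue were set aside, because $f\mapsto\dim_H\graph(f+g)$ is not lower semicontinuous in the uniform norm. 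Finally, your opening sentence announces a reduction to Theorem~\ref{t:gr} (the Euclidean case), but the body of the proposal never performs such a reduction; no such reduction exists for general $K$, which is exactly why a direct construction on $K$ is necessary.
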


In \cite{BDE} the above theorem is a corollary of a much deeper result concerning the fibers of a prevalent
$f\in C(K,\RR^d)$. Following Fraser and Hyde \cite{FH}, in Section~\ref{s:H}
we give a simpler proof for Theorem~\ref{t:H} based on the energy method.

Finally, in Section~\ref{s:open} we pose some open problems.

\section{Preliminaries}
Probability and expectation will be denoted by $\Pr$ and $\EE$, and $|\cdot|$ denotes absolute value.
The compact metric space $K$ is called a \emph{Cantor space} if it is perfect and totally disconnected.
Let $(X,\rho)$ be a metric space. We endow $X\times \RR^d$ by the metric
$$\rho_{X\times \RR^d}((x_1,z_1),(x_2,z_2))=\sqrt{\rho(x_1,x_2)^2+|z_1-z_2|^2}.$$
For $x\in X$ and $r>0$ let $B(x,r)$ and $U(x,r)$ denote the closed and open ball of
radius $r$ centered at $x$, respectively. For $A,B \subset X$ let us define $\dist(A,B) = \inf\{\rho(x,y) : x\in A,~y\in B\}$. Let
$\diam A$, $\inter A$, and $\cl A$ denote the diameter, interior, and closure of $A$, respectively.
Given $\delta>0$ we say that a set $S\subset X$ is a
\emph{$\delta$-packing} if $\rho(x,z)>\delta$ for all distinct $x,z\in S$. For $n\in \NN^+$ define
$$N_{n}(X)=\max\{\#S: S\subset X \textrm{ is a $2^{-n}$-packing} \}.$$
If $X$ is non-empty and totally bounded then the \emph{lower} and \emph{upper box dimension} of $X$ are respectively defined as
\begin{align*}
\underline{\dim}_B X &=\liminf_{n\to \infty} \frac{\log N_{n}(X)}{n\log 2}, \\
\overline{\dim}_B X&=\limsup_{n\to \infty} \frac{\log N_{n}(X)}{n\log 2}.
\end{align*}
Let $\underline{\dim}_B X=\overline{\dim}_B X=\infty$ if $X$ is not totally bounded.
The \emph{packing dimension} of $X$ is defined as
$$\dim_P X=\inf\left\{\sup_{i}\overline{\dim}_B A_i: X\subset \bigcup_{i=1}^{\infty} A_i\right\}.$$
For the following lemma see \cite[Lemma~3.2]{MM} or \cite[Lemma~4]{FaH}.

\begin{lemma}\label{l:pack1} Let $K$ be a compact metric space and let $s\in \RR$.
If $\dim_P K>s$ then there is a compact set $C\subset K$ such that $\dim_P (C\cap U)\geq s$ for all open sets $U$ with $C\cap U\neq \emptyset$.
\end{lemma}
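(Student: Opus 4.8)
The plan is to obtain $C$ by a transfinite Cantor--Bendixson-type removal of the parts of $K$ on which the packing dimension is locally smaller than $s$. Two standard facts will be used throughout. First, packing dimension is countably stable: with the definition above one has $\dim_P\bigl(\bigcup_{i=1}^\infty A_i\bigr)=\sup_i\dim_P A_i$, which follows by diagonalizing countable covers. Second, $K$ is compact metric, hence second countable, hence hereditarily Lindel\"of; in particular, for every subspace $Y\subset K$, every relatively open subset of $Y$ is a countable union of relatively open sets.

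If $s\le 0$ one may simply take $C=K$, since then $\dim_P(C\cap U)\ge 0\ge s$ for every open $U$ with $C\cap U\ne\emptyset$. So assume $s>0$. Define a transfinite sequence of closed subsets of $K$ by $K_0=K$,
$$K_{\alpha+1}=K_\alpha\setminus W_\alpha,\qquad W_\alpha=\bigcup\{V:\ V\text{ is relatively open in }K_\alpha\text{ and }\dim_P V<s\},$$
and $K_\lambda=\bigcap_{\alpha<\lambda}K_\alpha$ for limit ordinals $\lambda$. Each $W_\alpha$ is relatively open in $K_\alpha$, so each $K_\alpha$ is closed in $K$, hence compact. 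By the Lindel\"of property each $W_\alpha$ is a countable union of relatively open sets of packing dimension $<s$, so $\dim_P W_\alpha\le s$ by countable stability. Consequently, for every \emph{countable} ordinal $\alpha$ the set $K\setminus K_\alpha=\bigcup_{\beta<\alpha}W_\beta$ is a countable union of sets of packing dimension at most $s$, so $\dim_P(K\setminus K_\alpha)\le s$; since $\dim_P K>s$ and $\dim_P K=\max\{\dim_P K_\alpha,\dim_P(K\setminus K_\alpha)\}$, we get $\dim_P K_\alpha=\dim_P K$, and in particular $K_\alpha\ne\emptyset$.

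The sequence $(K_\alpha)$ is non-increasing and strictly decreasing at each successor step with $W_\alpha\ne\emptyset$. A separable metric space admits no strictly decreasing $\omega_1$-indexed chain of closed sets: picking, for each strict step, a basic open set that meets $K_\alpha$ but is contained in $W_\alpha$ (hence misses $K_{\alpha+1}$ and all later $K_\beta$) would produce uncountably many distinct basic open sets. Hence there is a countable ordinal $\lambda$ with $W_\lambda=\emptyset$; put $C=K_\lambda$. Then $C$ is compact, $\dim_P C=\dim_P K>s$ so $C\ne\emptyset$, and $W_\lambda=\emptyset$ means there is no non-empty relatively open $V\subset C$ with $\dim_P V<s$. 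Since the relatively open subsets of $C$ are precisely the sets $C\cap U$ with $U$ open in $K$, this gives $\dim_P(C\cap U)\ge s$ for every open $U$ with $C\cap U\ne\emptyset$, as required.

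The only real subtlety is the interplay of the two ingredients: the removal process must terminate at a countable ordinal (which is exactly what second-countability of $K$ provides), and the dimension-preservation $\dim_P K_\alpha=\dim_P K$ must survive through all these countably many stages, including the limit ones, which is where countable stability of $\dim_P$ is essential --- note this is precisely the property that fails for box dimension, explaining why the lemma is stated for packing dimension. Everything else is routine bookkeeping.
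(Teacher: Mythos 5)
Your proof is correct. It rests on the two ingredients you identify --- hereditary Lindel\"ofness of $K$ and countable stability of $\dim_P$ --- applied to a ``remove the locally small parts'' construction, which is also the idea behind the cited references. However, because you remove relatively open sets with $\dim_P V < s$ (strict inequality), the supremum of the dimensions of the pieces removed at a single stage may equal $s$ rather than lie below it, and this is what forces the transfinite iteration and the ordinal bookkeeping needed to terminate it. Replacing $<s$ by $\leq s$ collapses the whole construction to a single step: let $G = \bigcup\{V : V \text{ relatively open in } K,\ \dim_P V \leq s\}$ and $C = K \setminus G$. By Lindel\"ofness and countable stability, $\dim_P G \leq s$, so $\dim_P C = \dim_P K > s$ and $C \neq \emptyset$; and if $U$ is open with $C\cap U\neq\emptyset$ but $\dim_P(C\cap U)\leq s$, then $\dim_P(K\cap U)=\max\{\dim_P(C\cap U),\dim_P(G\cap U)\}\leq s$, so $K\cap U$ is itself one of the sets defining $G$, whence $K\cap U\subset G$ and $C\cap U=\emptyset$, a contradiction. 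This one-step version avoids transfinite recursion entirely and in fact yields the slightly stronger conclusion $\dim_P(C\cap U) > s$. Your iterated argument is sound as written; it is simply more work than is needed.
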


For the following lemma see the proof of \cite[Proposition~3]{T} or \cite[Corollary~3.9]{F}.

\begin{lemma} \label{l:pack2} Let $K$ be a compact metric space and $s\in \RR$.
If $\overline{\dim}_{B} (K\cap U)\geq s$ for every non-empty open set $U\subset K$, then
$\dim_P K\geq s$.
\end{lemma}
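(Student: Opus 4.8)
The plan is to unwind the definition of packing dimension and combine two standard ingredients: the Baire category theorem (available since $K$, being compact, is complete and hence a Baire space) and the fact that the upper box dimension of a set coincides with that of its closure. Concretely, it suffices to show that \emph{every} countable cover $K\subset\bigcup_{i=1}^{\infty}A_i$ satisfies $\sup_i\overline{\dim}_B A_i\geq s$, because then $\dim_P K=\inf\{\sup_i\overline{\dim}_B A_i\}\geq s$ directly from the definition. We may assume $K\neq\emptyset$.

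Fix such a cover. The first step is to reduce to the case where every $A_i$ is a closed subset of $K$. Replacing each $A_i$ by $A_i\cap K$ does not increase $\sup_i\overline{\dim}_B A_i$, by monotonicity of $\overline{\dim}_B$, and still leaves a cover of $K$; then replacing each $A_i$ by its closure in $K$ leaves $\overline{\dim}_B A_i$ unchanged and still covers $K$, since $K$ is closed. The invariance $\overline{\dim}_B A=\overline{\dim}_B \cl A$ used here follows from the observation that a $2^{-n}$-packing $S\subset \cl A$ of cardinality $N_n(\cl A)$ has minimal pairwise distance strictly greater than $2^{-n}$, so perturbing each point of $S$ by a sufficiently small amount into $A$ produces a $2^{-n}$-packing in $A$ of the same cardinality; hence $N_n(A)=N_n(\cl A)$ for every $n$, and the claim follows.

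Now $K=\bigcup_{i=1}^{\infty}A_i$ exhibits the nonempty Baire space $K$ as a countable union of closed sets, so some $A_{i_0}$ has nonempty interior $U=\inter A_{i_0}$ in $K$. Then $U$ is a nonempty open subset of $K$, so $K\cap U=U$, and the hypothesis yields $\overline{\dim}_B U\geq s$; since $U\subset A_{i_0}$, monotonicity of $\overline{\dim}_B$ gives $\overline{\dim}_B A_{i_0}\geq\overline{\dim}_B U\geq s$, and therefore $\sup_i\overline{\dim}_B A_i\geq s$. As the cover was arbitrary, $\dim_P K\geq s$. The argument is essentially routine; the only place where a naive appeal to Baire category would fail — and the only point that genuinely uses the hypothesis in its stated open-set form — is the reduction to a cover by closed sets, which is exactly what the closure-invariance of the upper box dimension provides.
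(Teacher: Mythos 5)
Your proof is correct and follows the same standard argument the paper points to (Tricot's Proposition~3, Falconer's Corollary~3.9): pass to a countable cover by closed subsets of $K$ using the invariance of upper box dimension under closure, apply the Baire category theorem to find a cover element with nonempty interior, and invoke the hypothesis on that interior. The closure-invariance justification via perturbing a maximal $2^{-n}$-packing from $\cl A$ into $A$ is sound given the paper's strict-inequality definition of a $\delta$-packing.
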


For $s \geq 0$ the \emph{$s$-dimensional Hausdorff content} of $X$ is defined as
\begin{equation*}
\mathcal{H}_{\infty}^{s}(X)=\inf \left\{ \sum_{i=1}^\infty (\diam
A_{i})^{s}: X \subset \bigcup_{i=1}^{\infty} A_{i} \right\}.
\end{equation*}
The \emph{Hausdorff dimension} of a non-empty $X$ is defined as
$$\dim_{H} X= \inf\{s \geq 0: \mathcal{H}_{\infty}^{s}(X) =0\}.$$
Recall that $\dim \emptyset=-1$ by convention for each of the above dimensions.
For a Borel probability measure $\nu$ on $X$ and $s>0$ we define the $s$-energy of $\nu$ by
$$I_s(\nu)=\iint_{X^2} \frac{\mathrm{d} \nu(x) \, \mathrm{d} \nu(y)}{\rho(x,y)^{s}}.$$
For the following theorem see \cite[Theorem~8.9]{Ma} and Frostman's lemma for compact metric spaces \cite[Theorem~8.17]{Ma}.
\begin{theorem} \label{t:energy} For a compact metric space $K$ we have
$$\dim_H K=\sup\{s>0: \exists \, \nu \textrm{ on } K \textrm{ such that } I_s(\nu)<\infty\}.$$
\end{theorem}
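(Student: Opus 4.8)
The plan is to establish the two inequalities $\dim_H K\le s_0$ and $\dim_H K\ge s_0$ separately, where $s_0$ denotes the supremum on the right-hand side, with the convention $\sup\emptyset=0$; since the identity is immediate when $\dim_H K=0$, I assume $\dim_H K>0$. The first inequality is the classical energy (capacity) bound and will be deduced from the mass distribution principle, while the second is a consequence of Frostman's lemma for compact metric spaces.

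For $\dim_H K\ge s_0$ it suffices to show that if $\nu$ is a Borel probability measure on $K$ with $I_s(\nu)<\infty$ for some $s>0$, then $\dim_H K\ge s$. Writing $\phi(x)=\int_K\rho(x,y)^{-s}\,\mathrm{d}\nu(y)$ for the $s$-potential of $\nu$, we have $\int_K\phi\,\mathrm{d}\nu=I_s(\nu)<\infty$, so $\phi$ is finite $\nu$-almost everywhere; I would then choose $M$ with $\nu(\{\phi\le M\})>0$ and replace $\nu$ by its normalized restriction $\nu'$ to $\{\phi\le M\}$. For $x\in\supp\nu'$ and $r>0$ this gives
\[
\nu'(B(x,r))\le r^{s}\int_{B(x,r)}\rho(x,y)^{-s}\,\mathrm{d}\nu'(y)\le M\,r^{s},
\]
and hence $\nu'(A)\le M(\diam A)^{s}$ for every set $A$ (trivially if $A\cap\supp\nu'=\emptyset$, and otherwise by enclosing $A$ in the ball of radius $\diam A$ about a point of $A\cap\supp\nu'$). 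The mass distribution principle then yields $\mathcal{H}^{s}_{\infty}(K)\ge\nu'(K)/M>0$, so $\dim_H K\ge s$; taking the supremum over all admissible $s$ gives $\dim_H K\ge s_0$.

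For $\dim_H K\le s_0$ I would show that every $s$ with $0<s<\dim_H K$ is admissible, which forces $s_0\ge\dim_H K$. Fix such an $s$ and choose $t$ with $s<t<\dim_H K$; then $\mathcal{H}^{t}(K)=\infty$, hence $\mathcal{H}^{t}_{\infty}(K)>0$ (these vanish simultaneously), and Frostman's lemma for compact metric spaces produces a Borel probability measure $\mu$ on $K$ and a constant $C$ with $\mu(B(x,r))\le C\,r^{t}$ for all $x\in K$ and $r>0$. To bound the energy I would use the layer-cake identity $\int_K\rho(x,y)^{-s}\,\mathrm{d}\mu(y)=s\int_{0}^{\infty}r^{-s-1}\mu(U(x,r))\,\mathrm{d}r$ and split the integral at $r=\diam K$: on $(0,\diam K]$ the integrand is at most $sC\,r^{t-s-1}$, which is integrable because $t-s>0$, while on $(\diam K,\infty)$ we have $\mu(U(x,r))=1$ and $s>0$, so the tail converges. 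Thus the $s$-potential of $\mu$ is bounded uniformly in $x$, whence $I_s(\mu)<\infty$ and $s\le s_0$; letting $s\uparrow\dim_H K$ finishes this direction.

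The main obstacle is the one ingredient I would not prove from scratch, namely the existence part of Frostman's lemma for a general compact metric space: the familiar dyadic-cube argument in $\RR^{n}$ is unavailable, and instead one constructs the desired measure as a weak-$*$ limit of finite measures adapted to a nested sequence of finite covers of $K$ (this is exactly Theorem~8.17 in Mattila). Everything else reduces to the elementary potential-theoretic estimates sketched above.
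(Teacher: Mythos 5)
Your argument is correct and follows the same route the paper indicates: the paper gives no proof of its own but cites Mattila's Theorem~8.9 (the energy bound, which you derive via the mass distribution principle) and Frostman's lemma for compact metric spaces (Mattila's Theorem~8.17, which you invoke for the converse direction together with the standard layer-cake potential estimate). The only small point worth making explicit in your first half is that the $s$-potential $\phi$ is lower semicontinuous (Fatou), so $\{\phi\le M\}$ is closed and therefore contains $\supp\nu'$, which is what justifies applying the bound $\nu'(B(x,r))\le M'r^{s}$ at every point of the support.
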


For the following facts and for more on these concepts see \cite{F}.

\begin{fact} \label{f:<} For any metric space $X$ we have
$$\dim_H X\leq \underline{\dim}_B X\leq \overline{\dim}_B X  \quad \textrm{and} \quad \dim_H X\leq \dim_P X\leq \overline{\dim}_B X.$$
\end{fact}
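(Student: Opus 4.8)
The plan is to prove both chains by a short sequence of elementary steps, after disposing of the degenerate cases. If $X=\emptyset$ then all four dimensions equal $-1$ by convention, and if $X$ is not totally bounded then $\underline{\dim}_B X=\overline{\dim}_B X=\infty$, so every asserted inequality is trivial; hence I may assume throughout that $X$ is non-empty and totally bounded. The inequality $\underline{\dim}_B X\le\overline{\dim}_B X$ is then immediate, being just $\liminf\le\limsup$ applied to the sequence $\log N_n(X)/(n\log 2)$. It thus remains to establish $\dim_H X\le\underline{\dim}_B X$, then $\dim_P X\le\overline{\dim}_B X$, and finally $\dim_H X\le\dim_P X$.

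For $\dim_H X\le\underline{\dim}_B X$ I would argue via maximal packings. Fix $n$ and let $S\subset X$ be a $2^{-n}$-packing with $\#S=N_n(X)$, which exists since $X$ is totally bounded (so $N_n(X)<\infty$ and the maximum is attained). By maximality, for every $y\in X$ there is $x\in S$ with $\rho(x,y)\le 2^{-n}$, since otherwise $S\cup\{y\}$ would be a strictly larger $2^{-n}$-packing; hence the closed balls $B(x,2^{-n})$ with $x\in S$ cover $X$, and each has diameter at most $2^{1-n}$. Therefore $\mathcal{H}_\infty^s(X)\le N_n(X)\,2^{(1-n)s}$ for every $s\ge 0$ and every $n$. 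Now, given $s>\underline{\dim}_B X$, I would pick $s'$ with $\underline{\dim}_B X<s'<s$ and a sequence $n_k\to\infty$ with $N_{n_k}(X)<2^{n_k s'}$; substituting gives $\mathcal{H}_\infty^s(X)\le 2^{s}\,2^{-n_k(s-s')}\to 0$, so $\mathcal{H}_\infty^s(X)=0$ and $\dim_H X\le s$. Letting $s\downarrow\underline{\dim}_B X$ completes this step.

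For $\dim_P X\le\overline{\dim}_B X$ I would simply feed the trivial cover $A_1=X$, $A_i=\emptyset$ $(i\ge 2)$ into the definition of $\dim_P$, noting $\overline{\dim}_B\emptyset=-1\le\overline{\dim}_B X$. For $\dim_H X\le\dim_P X$ I would first recall that $\mathcal{H}_\infty^s$ is countably subadditive (concatenate near-optimal covers of the pieces), which yields the countable stability $\dim_H\bigcup_i A_i=\sup_i\dim_H A_i$ together with monotonicity of $\dim_H$; then for any cover $X\subset\bigcup_{i=1}^\infty A_i$ the already-proved inequality $\dim_H A_i\le\underline{\dim}_B A_i\le\overline{\dim}_B A_i$ (trivial when $A_i$ is empty or not totally bounded) gives $\dim_H X\le\sup_i\dim_H A_i\le\sup_i\overline{\dim}_B A_i$, and taking the infimum over all such covers finishes the proof.

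There is no genuine obstacle here: everything rests on elementary properties of packings and of the Hausdorff content, all of which can be found in \cite{F}. The only points requiring a little care are the bookkeeping forced by the conventions $\dim\emptyset=-1$ and $\underline{\dim}_B X=\overline{\dim}_B X=\infty$ for spaces that fail to be totally bounded, and the observation that one must work with a \emph{maximum-size}, hence maximal, packing so that the associated balls genuinely cover $X$.
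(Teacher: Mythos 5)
Your proof is correct. The paper does not prove this fact at all --- it is stated as a standard result with a pointer to Falconer's book \cite{F} --- so there is no in-paper argument to compare against; your self-contained derivation is the standard one, and you correctly adapt it to the paper's packing-based definition of $N_n(X)$ by passing from a maximum-cardinality $2^{-n}$-packing to a cover by closed $2^{-n}$-balls. The only imprecision is the opening claim that when $X$ fails to be totally bounded ``every asserted inequality is trivial'': the inequality $\dim_H X\leq \dim_P X$ does not mention box dimension and is not trivial in that case (e.g.\ an infinite uniformly discrete space has $\overline{\dim}_B X=\infty$ but $\dim_P X=0$). This costs you nothing, however, because the argument you give in the final paragraph for $\dim_H X\leq \dim_P X$ --- countable stability of $\dim_H$ plus the per-piece bound $\dim_H A_i\leq\overline{\dim}_B A_i$, which you correctly note holds for empty or non-totally-bounded pieces as well --- nowhere uses total boundedness of $X$ itself, so it already covers that case verbatim.
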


\begin{fact} \label{f:prod} Let $\dim$ be one of the above dimensions. Then for every non-empty
metric space $X$ and $d\in \NN^+$ we have
\begin{equation*} \dim (X\times [0,1]^d)=\dim X+d.
\end{equation*}
\end{fact}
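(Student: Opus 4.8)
The plan is to prove the identity one dimension at a time, starting with the box dimensions (from which the packing case will follow via Lemmas~\ref{l:pack1} and~\ref{l:pack2}) and finishing with the Hausdorff case via a Fubini-type estimate on Hausdorff content. Write $Q=[0,1]^{d}$; recall that $Q$ is totally bounded with $\log N_{n}(Q)/(n\log 2)\to d$, and that $Q$ carries Lebesgue measure $\lambda$ with $\lambda\big(B(y,r)\big)\leq\omega_{d}r^{d}$, where $\omega_{d}=\lambda\big(B(0,1)\big)$; the coordinate projections $\pi_{1}\colon X\times\RR^{d}\to X$ and $\pi_{2}\colon X\times\RR^{d}\to\RR^{d}$ are both $1$-Lipschitz for the product metric. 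For the box dimensions, note that if $S\subseteq X$ and $T\subseteq Q$ are $2^{-n}$-packings then so is $S\times T$, since distinct points of $S\times T$ differ by more than $2^{-n}$ in one coordinate; hence $N_{n}(X\times Q)\geq N_{n}(X)N_{n}(Q)$. Conversely, comparing packing with covering numbers and using that the products of a cover of $X$ by balls of radius $2^{-n-2}$ with such a cover of $Q$ form a cover of $X\times Q$ by balls of radius $2^{-n-1}$, one gets $N_{n}(X\times Q)\leq N_{n+2}(X)N_{n+2}(Q)$. Dividing logarithms by $n\log 2$ and passing to $\liminf$ and $\limsup$, the terms coming from $Q$ tend to $d$ and a bounded index shift is harmless, so $\underline{\dim}_{B}(X\times Q)=\underline{\dim}_{B}X+d$ and $\overline{\dim}_{B}(X\times Q)=\overline{\dim}_{B}X+d$; if $X$ is not totally bounded, neither is $X\times Q$, and both sides are $\infty$.

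For the packing dimension, any countable cover $X\subseteq\bigcup_{i}A_{i}$ by non-empty sets yields $X\times Q\subseteq\bigcup_{i}(A_{i}\times Q)$ with $\overline{\dim}_{B}(A_{i}\times Q)=\overline{\dim}_{B}A_{i}+d$ by the box case, so $\dim_{P}(X\times Q)\leq\dim_{P}X+d$. For the reverse inequality I would first pass to a compact subset of $X$ (the only case needed for the applications in this paper; in general one may instead invoke the classical bound $\dim_{P}(X\times Q)\geq\dim_{P}X+\dim_{H}Q$). So let $X$ be compact with $\dim_{P}X>s$; by Lemma~\ref{l:pack1} there is a compact $C\subseteq X$ with $\dim_{P}(C\cap W)\geq s$ for every open $W$ meeting $C$. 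Every non-empty relatively open $V\subseteq C\times Q$ contains a product $(C\cap W)\times R$ with $C\cap W$ non-empty open and $R\subseteq Q$ a nondegenerate cube, and since the box case applies to $R$ (box dimension being bi-Lipschitz invariant), Fact~\ref{f:<} gives
\[
\overline{\dim}_{B}\big((C\cap W)\times R\big)=\overline{\dim}_{B}(C\cap W)+d\geq\dim_{P}(C\cap W)+d\geq s+d .
\]
Hence $\overline{\dim}_{B}\big((C\times Q)\cap V\big)\geq s+d$ for every such $V$, so $\dim_{P}(X\times Q)\geq\dim_{P}(C\times Q)\geq s+d$ by Lemma~\ref{l:pack2}; letting $s\uparrow\dim_{P}X$ completes this case.

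For the Hausdorff dimension, the crux is the content inequality $\mathcal{H}_{\infty}^{s}(A)\leq\omega_{d}\,\mathcal{H}_{\infty}^{s+d}(A\times Q)$ for every $A\subseteq X$ and $s\geq 0$. Given a countable cover $A\times Q\subseteq\bigcup_{i}U_{i}$, which we may take to consist of open sets, fix $y\in Q$: the $U_{i}$ meeting $A\times\{y\}$ project under $\pi_{1}$ to a cover of $A$ by sets of diameter $\leq\diam U_{i}$, so $\mathcal{H}_{\infty}^{s}(A)\leq\sum_{i\,:\,y\in\pi_{2}(U_{i})}(\diam U_{i})^{s}$. Integrating over $y\in Q$ against $\lambda$ and using $\lambda\big(\pi_{2}(U_{i})\big)\leq\omega_{d}(\diam U_{i})^{d}$ gives $\mathcal{H}_{\infty}^{s}(A)\leq\omega_{d}\sum_{i}(\diam U_{i})^{s+d}$; take the infimum over covers. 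Since $\mathcal{H}_{\infty}^{s}(X)>0$ whenever $s<\dim_{H}X$ (directly from the definition of $\dim_{H}$), we get $\mathcal{H}_{\infty}^{s+d}(X\times Q)>0$, i.e.\ $\dim_{H}(X\times Q)\geq s+d$, hence $\dim_{H}(X\times Q)\geq\dim_{H}X+d$. For the upper bound, fix $s>\dim_{H}X$ and $\varepsilon\in(0,1)$ and choose a cover $X\subseteq\bigcup_{i}A_{i}$ with $\sum_{i}(\diam A_{i})^{s}<\varepsilon$; then automatically $\diam A_{i}<\varepsilon^{1/s}<1$. Covering each $A_{i}\times Q$ by the $\leq(2/\diam A_{i})^{d}$ products of $A_{i}$ with the side-$(\diam A_{i})$ cubes tiling $Q$, each of diameter $\leq\sqrt{d+1}\,\diam A_{i}$, produces a cover of $X\times Q$ witnessing $\mathcal{H}_{\infty}^{s+d}(X\times Q)\leq 2^{d}(d+1)^{(s+d)/2}\varepsilon$; letting $\varepsilon\downarrow 0$ and then $s\downarrow\dim_{H}X$ gives $\dim_{H}(X\times Q)\leq\dim_{H}X+d$.

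Once the product metric is unwound, all of this is elementary geometry, and the one genuinely delicate point is the Fubini step in the Hausdorff lower bound: one must pass to open covers so that $y\mapsto\mathbf{1}_{\pi_{2}(U_{i})}(y)$ is measurable — or observe that $\pi_{2}(U_{i})$ is analytic, hence Lebesgue measurable — and one uses that $\mathcal{H}_{\infty}^{s}(X)>0$ for $s<\dim_{H}X$ straight from the content-based definition of $\dim_{H}$. The reduction to a compact subset in the packing lower bound is the only place where the bare "metric space" generality needs a small additional remark; the applications in this paper only use $X$ compact, so this causes no difficulty here.
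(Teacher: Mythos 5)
The paper does not prove this Fact: it is stated without proof, with the reader referred to Falconer's book, so there is no internal argument to compare against. Your self-contained argument is the standard one and is essentially correct. The packing-count comparison $N_n(X)N_n(Q)\leq N_n(X\times Q)\leq N_{n+2}(X)N_{n+2}(Q)$ gives both box-dimension identities; the packing upper bound follows from countable stability and the box case; and the Fubini-type content inequality together with the cube-tiling cover give the two Hausdorff bounds. Two minor remarks. In the Hausdorff upper bound the factor $(2/\diam A_i)^d$ degenerates when $\diam A_i=0$, so one should first inflate such $A_i$ to small open sets (this is harmless and only perturbs the sum slightly). More substantively, as you yourself flag, your packing lower bound via Lemmas~\ref{l:pack1} and~\ref{l:pack2} only covers compact $X$, whereas the Fact is stated for every non-empty metric space; the compact case is exactly what the paper uses ($X=K$ compact throughout), and the general case is available from the classical inequality $\dim_P(A\times B)\geq \dim_P A+\dim_H B$, but invoking that is itself a citation rather than a proof. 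Given that the paper also merely cites the literature for this Fact, your treatment is entirely reasonable; a fully self-contained proof valid for arbitrary metric spaces would just need a direct argument for that last inequality.
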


The next lemma is \cite[Proposition~8]{D}.

\begin{lemma}\label{l:D} Assume that $G_1,G_2$ are abelian Polish groups and $\Phi \colon G_1\to G_2$ is a continuous onto homomorphism.
If $S\subset G_2$ is prevalent then so is $\Phi^{-1}(S)\subset G_1$.
\end{lemma}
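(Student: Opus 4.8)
The plan is to prove the equivalent statement about shy sets: it suffices to show that if $A\subset G_2$ is \emph{shy} then $\Phi^{-1}(A)\subset G_1$ is shy, since $\Phi^{-1}(G_2\setminus A)=G_1\setminus\Phi^{-1}(A)$. So fix, as provided by Definition~\ref{d:shy}, a Borel set $B\subset G_2$ with $A\subset B$ and a Borel probability measure $\mu$ on $G_2$ such that $\mu(B+x)=0$ for every $x\in G_2$. The natural candidates to witness the shyness of $\Phi^{-1}(A)$ are the Borel set $B':=\Phi^{-1}(B)$ --- it is Borel because $\Phi$ is continuous, and clearly $\Phi^{-1}(A)\subset B'$ --- together with a Borel probability measure $\mu'$ on $G_1$ that ``projects'' to $\mu$.

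The heart of the matter is to construct a Borel probability measure $\mu'$ on $G_1$ with $\Phi_*\mu'=\mu$, i.e.\ $\mu'(\Phi^{-1}(C))=\mu(C)$ for every Borel $C\subset G_2$. I would obtain this from a measurable selection: the set $\{(y,x)\in G_2\times G_1:\Phi(x)=y\}$ is closed, hence analytic, and its projection to $G_2$ is all of $G_2$ because $\Phi$ is onto, so the Jankov--von Neumann uniformization theorem yields a universally measurable map $\sigma\colon G_2\to G_1$ with $\Phi\circ\sigma=\id_{G_2}$. Then $\sigma^{-1}(E)$ is $\mu$-measurable for every Borel $E\subset G_1$, so $\mu'(E):=\mu(\sigma^{-1}(E))$ defines a Borel probability measure on $G_1$, and $(\Phi_*\mu')(C)=\mu(\sigma^{-1}(\Phi^{-1}(C)))=\mu((\Phi\circ\sigma)^{-1}(C))=\mu(C)$, as wanted. (Alternatively one may just invoke the standard fact that a Borel surjection between Polish spaces lifts Borel probability measures.) This selection step is the only non-routine ingredient, and I expect it to be the main point of the proof; one should also keep in mind that although $\sigma$ is merely universally measurable, the set function $E\mapsto\mu(\sigma^{-1}(E))$ really is a Borel probability measure because it is only evaluated on Borel sets, and that no tightness or compact-support hypothesis on $\mu$ is needed since Definition~\ref{d:shy} already allows arbitrary Borel probability measures.

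It remains to check that $B'$ and $\mu'$ witness the shyness of $\Phi^{-1}(A)$. For this I would use the identity $\Phi^{-1}(B)+y=\Phi^{-1}\bigl(B+\Phi(y)\bigr)$ valid for all $y\in G_1$; it holds because, $\Phi$ being a homomorphism, $x-y\in\Phi^{-1}(B)\iff\Phi(x)-\Phi(y)\in B\iff\Phi(x)\in B+\Phi(y)$. Combining this with $\Phi_*\mu'=\mu$ gives, for every $y\in G_1$,
\[
\mu'(B'+y)=\mu'\bigl(\Phi^{-1}(B+\Phi(y))\bigr)=(\Phi_*\mu')\bigl(B+\Phi(y)\bigr)=\mu\bigl(B+\Phi(y)\bigr)=0,
\]
the last equality because $\mu$ witnesses the shyness of $A$. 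Hence $\Phi^{-1}(A)\subset B'$ with $B'$ Borel and $\mu'(B'+y)=0$ for all $y\in G_1$, so $\Phi^{-1}(A)$ is shy; equivalently $\Phi^{-1}(S)$ is prevalent, which is the assertion of the lemma.
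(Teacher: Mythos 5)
The paper does not prove Lemma~\ref{l:D}; it simply cites Dougherty \cite[Proposition~8]{D}, so there is no in-paper proof to compare against. Your argument is correct and is essentially the standard proof of this fact. The reduction to shy sets, the observation that $\Phi^{-1}(B)$ is Borel and contains $\Phi^{-1}(A)$, the identity $\Phi^{-1}(B)+y=\Phi^{-1}\bigl(B+\Phi(y)\bigr)$, and the conclusion via $\Phi_*\mu'=\mu$ are all exactly right, and you correctly identify the measure-lifting step as the only nontrivial point.

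Two small remarks on that step. First, Jankov--von Neumann does the job, but you could also get a genuine \emph{Borel} section here: a continuous surjective homomorphism between Polish groups is automatically open (Pettis' theorem together with the Baire property of analytic sets), so the closed-valued multifunction $y\mapsto\Phi^{-1}(y)$ is lower semicontinuous and the Kuratowski--Ryll-Nardzewski theorem yields a Borel selector $\sigma$; then $\mu'=\sigma_*\mu$ is directly a Borel measure with no appeal to universal measurability. Second, your parenthetical that ``no tightness hypothesis on $\mu$ is needed'' is harmless but moot: every Borel probability measure on a Polish space is automatically tight (Ulam's theorem), so nothing would be lost even if tightness were required. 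Neither point affects the correctness of what you wrote.
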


Lemma~\ref{l:D} and Tietze's extension theorem in $\mathbb{R}^d$ imply the following corollary.

\begin{corollary} \label{c:her}
Assume that $K_1\subset K_2$ are compact metric spaces and $d\in \mathbb{N}^+$. Let
$$R\colon C(K_2,\mathbb{R}^d)\to C(K_1,\mathbb{R}^d), \quad R(f)=f|_{K_1}.$$
If $\mathcal{A} \subset C(K_1,\mathbb{R}^d)$ is prevalent then $R^{-1}(\mathcal{A})\subset C(K_2,\mathbb{R}^d)$ is prevalent, too.
\end{corollary}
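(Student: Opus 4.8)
The plan is to recognize the restriction map $R$ as a continuous onto homomorphism between abelian Polish groups and then invoke Lemma~\ref{l:D} directly. First I would record the standing structural facts: for a compact metric space $K$, the space $C(K,\RR^d)$ with the maximum norm $\|f\|_\infty=\sup_{x\in K}|f(x)|$ is a Banach space, and it is separable (for instance $C(K,\RR)$ is separable by the Stone--Weierstrass theorem, and $C(K,\RR^d)$ is a finite product of copies of it), hence a Polish group under addition. Thus both $G_1:=C(K_2,\RR^d)$ and $G_2:=C(K_1,\RR^d)$ are abelian Polish groups, as required by Lemma~\ref{l:D}.

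Next I would verify the three properties of $R$. Linearity of $R$ is immediate, so $R$ is a group homomorphism. Continuity follows from $\|R(f)\|_\infty=\sup_{x\in K_1}|f(x)|\leq \sup_{x\in K_2}|f(x)|=\|f\|_\infty$, so $R$ is in fact $1$-Lipschitz. Surjectivity is where Tietze's theorem enters: since $K_1$ is compact and sits inside the metric space $K_2$, it is a closed subset of $K_2$; given $g=(g_1,\dots,g_d)\in C(K_1,\RR^d)$, Tietze's extension theorem applied to each coordinate $g_i\in C(K_1,\RR)$ produces $\tilde g_i\in C(K_2,\RR)$ with $\tilde g_i|_{K_1}=g_i$, and then $f:=(\tilde g_1,\dots,\tilde g_d)\in C(K_2,\RR^d)$ satisfies $R(f)=f|_{K_1}=g$.

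Finally I would apply Lemma~\ref{l:D} with $\Phi=R$, $G_1=C(K_2,\RR^d)$, $G_2=C(K_1,\RR^d)$, and $S=\mathcal{A}$: since $\mathcal{A}\subset C(K_1,\RR^d)$ is prevalent, $\Phi^{-1}(\mathcal{A})=R^{-1}(\mathcal{A})\subset C(K_2,\RR^d)$ is prevalent, which is exactly the assertion of Corollary~\ref{c:her}. I do not anticipate a genuine obstacle here; the only step that deserves an explicit sentence is the surjectivity of $R$, which is precisely the content of Tietze's extension theorem applied coordinatewise, using that a compact subspace of a metric space is closed.
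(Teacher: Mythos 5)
Your proof is correct and follows exactly the route the paper indicates: the paper explicitly derives Corollary~\ref{c:her} from Lemma~\ref{l:D} together with Tietze's extension theorem applied coordinatewise, and your write-up simply fills in the routine verifications (that $C(K,\RR^d)$ is an abelian Polish group, that $R$ is a continuous homomorphism, and that surjectivity follows from Tietze since $K_1$ is closed in $K_2$). No gaps.
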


\section{Upper and lower box dimensions} \label{s:M}

The aim of this section is to prove Theorems~\ref{t:grM} and \ref{t:Cantor}.

\begin{proof}[Proof of Theorem~\ref{t:grM}]
We may remove the finitely many isolated points from $K$ without changing the lower and upper box dimensions of the set.
This and Corollary~\ref{c:her} yield that we may assume that $K$ is perfect. By Fact~\ref{f:prod} it is enough to show only the lower bounds.
That is, we need to prove that for a prevalent $f\in C(K,\RR^d)$ we have
\begin{equation} \label{eq:gr} \underline{\dim}_B\graph(f)\geq \underline{\dim}_B K+d \quad \textrm{and} \quad
\overline{\dim}_B \graph(f)\geq\overline{\dim}_B K+d.
\end{equation}
For every $n\in \NN^+$ define the open set
$$\iA_n=\{f\in C(K,\RR^d): N_{n}(\graph(f))\geq N_{n}(K)2^{nd}n^{-2d}\},$$
where recall that $N_n(X)$ denotes the cardinality of the maximal $2^{-n}$-packing in $X$.
In order to show \eqref{eq:gr} it is enough to prove that the set
$$\iA:=\liminf_{n} \iA_n=\bigcup_{k=1}^{\infty} \left(\bigcap_{n=k}^{\infty} \iA_n\right)$$
is prevalent. As $\iA_n$ are open, $\iA$ is Borel.
We need to construct a Borel probability measure $\mu$ on $C(K,\RR^d)$ such that $\mu(A-g)=1$ for all $g\in C(K,\RR^d)$.

First we define $\mu$. For all $n\in \NN^+$ let us define $S_n\subset \RR^d$ as
$$S_n=2^{-n+3}\{0,1,\dots, \lfloor 2^{n}n^{-2} \rfloor\}^{d},$$
where $ \lfloor x  \rfloor$ denotes the integer part of $x$. Then clearly $S_n$ is a $2^{-n+2}$-packing such that $\#S_n\geq 2^{nd}n^{-2d}$. For all $n\in \NN^+$ let
$$s_n=\#S_n \quad \textrm{and} \quad k_n=N_{n}(K).$$
Let $\{X^n_i\}_{i,n\geq 1}$ be independent random variables defined on a measurable space $(\Omega,\iF)$
such that $\{X^n_i\}_{i\geq 1}$
is an i.i.d. sequence for each $n\in \NN^+$ with distribution given by
$$\Prob(X_i^n=y)=\frac{1}{s_n} \quad \textrm{for all } y\in S_n.$$
For each $n,i\geq 1$ define the generated $\sigma$-algebra
$$\iF_i^{n}=\sigma(X_{j}^n: 1\leq j\leq i-1),$$
where $\iF_1^{n}=\{\emptyset, \Omega\}$.
\begin{statement} \label{st} Let $n\in \NN^+$. There exists $\ell_n\in \NN^+$ with the following property.
For every random sequence $\{y_i\}_{i\geq 1}$ in $\RR^d$ such that $y_i$ is $\iF_i^{n}$-measurable we have
$$\Prob\left(N_{n}\left(\bigcup_{i=1}^{\ell_n} \{X^n_i+y_i\}\right)<s_n\right)\leq \frac{1}{k_n2^n}.$$
\end{statement}
\begin{proof}[Proof of Statement~\ref{st}]
Define $\ell_n=s_n m_n$, where $m_n\in \NN^+$ is so large that
\begin{equation} \label{eq:mn} \left(1-\frac{1}{s_n}\right)^{m_n}\leq \frac{1}{s_n k_n 2^n}.
\end{equation}
Fix an arbitrary random sequence $\{y_i\}_{i\geq 1}$. For all $s\in \{1,\dots,s_n\}$ let
$$Z_s=\bigcup_{i=1}^{sm_n} \{X^n_i+y_i\} \quad \textrm{and} \quad  N(s)=N_{n}(Z_{s}).$$
Now we prove by induction that for all $s\in \{1,\dots,s_n\}$ we have
\begin{equation} \label{eq:s}
\Prob\left(N(s)<s\right)\leq \frac{s}{s_nk_n2^n},
\end{equation}
and the case $s=s_n$ will complete the proof. If $s=1$ then \eqref{eq:s} is straightforward. For the induction step we need to prove that
\begin{equation} \label{eq:ind} \Prob(N(s+1)<s+1)-\Prob(N(s)<s)=\Prob(N(s+1)=N(s)=s)\leq \frac{1}{s_n k_n 2^n}.
\end{equation}
Suppose that $N(s)=s$ and $i\in \{sm_n+1,\dots, (s+1)m_n\}$ is fixed. First we prove that there is an $\iF_i^{n}$-measurable random $x_i\in S_n$ such that
\begin{equation} \label{eq:xi} N_{n}(Z_{s}\cup \{x_i+y_i\})=s+1.
\end{equation}
Indeed, the distance between any two balls of $\{B(x,2^{-n})\}_{x\in S_n}$ is at least $2^{-n+1}$, so
$N_{n}(Z_{s}-y_i)=N_{n}(Z_{s})=s<s_n$ implies that there is an $x_i\in S_n$ such that
$B(x_i,2^{-n})\cap (Z_{s}-y_i)=\emptyset$. Thus $\dist(Z_{s},\{x_i+y_i\})>2^{-n}$, so
\eqref{eq:xi} holds. As $x_i$ depends only on $y_i$ and $Z_s$, it is clearly $\iF_i^{n}$-measurable. Let
$B_i$ be the event that $X_j^{n} \neq x_j$ for all $sm_n<j<i$, then $B_i\in \iF_i^{n}$. As $x_i$ is $\iF_i^{n}$-measurable and
$X_i^{n}$ is independent of $\iF_i^{n}$, we have
\begin{equation} \label{eq:Bi} \Prob(X_i^{n}\neq x_i \,|\, B_i)=\Prob(X_i^{n}\neq x_i)=1-\frac{1}{s_n}.
\end{equation}
Therefore \eqref{eq:xi}, \eqref{eq:Bi}, and \eqref{eq:mn} imply that
\begin{align*}
\Prob(N(s+1)=N(s)=s)&\leq \Prob\left(X_i^{n} \neq x_i \textrm{ for all } sm_n<i\leq (s+1)m_n\right)\\
&=\prod_{i=sm_n+1}^{(s+1)m_n} \Prob(X_i^{n}\neq x_i \,|\, B_i) \\
&=\left(1-\frac{1}{s_n}\right)^{m_n} \leq \frac{1}{s_n k_n 2^n}.
\end{align*}
Thus \eqref{eq:ind} holds, and the proof of the statement is complete.
\end{proof}

Now we return to the proof of Theorem~\ref{t:grM}. For all $n$ let $\{x^{n}_k\}_{1\leq k\leq k_n}$ be a $2^{-n}$-packing in $K$ and assume that
for some $\varepsilon_n>0$ for all $j\neq k$ we have
\begin{equation} \label{eq:rho} \rho(x^n_j,x^n_k)\geq 2^{-n}+3\varepsilon_n,
\end{equation}
where $\rho$ denotes the metric of $K$.
As $K$ is perfect, for each $n\in \NN^+$ and $k\in \{1,\dots,k_n\}$
we can define distinct points $\{x^n_{k,i}\}_{1\leq i\leq \ell_n}$ in $B(x^n_k,\varepsilon_n)$
such that
$$E_n=\bigcup_{k=1}^{k_n}\bigcup_{i=1}^{\ell_n} \{x^n_{k,i}\}$$
satisfy
\begin{equation} \label{eq:En} E_m\cap E_n=\emptyset \quad \textrm{for all} \quad m<n.
\end{equation}
Let us define the random function $f_n\colon E_n\to \RR^d$ such that
$$f_n(x^n_{k,i})=X^n_i.$$
Tietze's extension theorem for the coordinate functions and \eqref{eq:En} imply
that the sample functions $f_n=f_n(\omega)$ can be extended to $f_n\in C(K,\RR^d)$ such that
\begin{enumerate}
\item \label{en:2} $f_n(x)=0$ if $x\in E_m$ for some $m<n$;
\item \label{en:3} $f_n(x)\in 8n^{-2}[0,1]^d$ for all $x\in K$.
\end{enumerate}
Let $\PP_n$ be the probability measure on $C(K,\RR^d)$ corresponding to
this method of randomly choosing $f_n$, and let $\iB_n\subset C(K,\RR^d)$ be its finite support. Clearly we have
$\#\iB_n=s_n^{\ell_n}$ and $\PP_n(\{f_n\})=s_n^{-\ell_n}$ for all $f_n\in \iB_n$. By \eqref{en:3} the sum $\sum_{n=1}^{\infty} f_n$
converges for all $f_n\in \iB_n$. Let $\mathbb{P}=\prod_{n=1}^{\infty} \mathbb{P}_n$ be a probability measure on the Borel subsets of $\iB=\prod_{n=1}^{\infty}\iB_n$ and let
$$\pi\colon \iB \to C(K,\RR^d), \quad  \pi((f_n))=\sum_{n=1}^{\infty} f_n.$$
Let us define
$$\mu=\mathbb{P}\circ \pi^{-1}.$$

Let $g\in C(K,\RR^d)$ be arbitrarily fixed, now we prove that $\mu(\iA-g)=1$. We need to show that
$\mu(\limsup_n (A^c_n-g))=0$, where $\iA_n^c$ denotes the complement of $\iA_n$. By the Borel-Cantelli lemma it is enough to prove that
$$\sum_{n=1}^{\infty} \mu(\iA^c_n-g)<\infty.$$
Fix $n\in \NN^+$, it is enough to show that $\mu(\iA^c_n-g)\leq 2^{-n}$. Let $h=g+\sum_{i=1}^{\infty} f_i$, we need to prove that
\begin{equation} \label{eq:f+g} \PP(h\notin \iA_n)\leq 2^{-n}.
\end{equation}

Let $h_0=g$ and for all $m\in \NN^+$ let $h_m=g+\sum_{i=1}^{m} f_i$. For each $k\in \{1,\dots,k_n\}$ and $i\in \{1,\dots,\ell_n\}$ define
$$y^n_{k,i}=h_{n-1}(x^n_{k,i}).$$
Fix $k\in \{1,\dots, k_n\}$. As $y^n_{k,i}$ is $\iF_i^{n}$-measurable for all $i\geq 1$,
Statement~\ref{st} yields that
$$\PP\left(N_n\left(\bigcup_{i=1}^{\ell_n} \{X^n_i+y^n_{k,i}\}\right)<s_n\right)\leq \frac{1}{k_n2^n}.$$
As $h_n(x^n_{k,i})=X^n_i+y^n_{k,i}$, summing the above inequality from $k=1$ to $k_n$ yields that
$$\PP\left(\exists k\leq k_n: N_n\left(\bigcup_{i=1}^{\ell_n} \{h_n(x^n_{k,i})\} \right)<s_n\right)\leq 2^{-n}.$$
By \eqref{eq:rho} all $k,k'\in \{1,\dots, k_n\}$ with $k\neq k'$ and $i,j\in \{1,\dots, \ell_n\}$ we have
$$\rho(x^n_{k,i}, x^n_{k',j})\geq 2^{-n}+\varepsilon_n>2^{-n}.$$
Therefore
$$\PP(N_n (\graph(h_n|_{E_n}))<k_n s_n)\leq 2^{-n}.$$
Property \eqref{en:2} yields that $h_n(x)=h(x)$ for all $x\in E_n$. As $k_n s_n\geq N_n(K)2^{nd}n^{-2d}$, we have
$$\PP(h\notin \iA_n)\leq \PP( N_n(\graph(h))<k_n s_n)\leq 2^{-n}.$$
Therefore \eqref{eq:f+g} holds, and the proof is complete.
\end{proof}

\begin{proof}[Proof of Theorem~\ref{t:Cantor}]
We may assume by scaling that
$$K=\left\{\sum_{i=1}^{\infty} a_i3^{-i}:~ a_i\in \{0,1\} \textrm{ for all } i\geq 1\right\}.$$
Define $f,g\colon K\to \RR$ such that if $x=\sum_{i=1}^{\infty} a_i3^{-i}\in K$ then
$$f(x)=\sum_{i=1}^{\infty} a_{2i-1}3^{-i} \quad \textrm{and} \quad g(x)=\sum_{i=1}^{\infty} a_{2i}3^{-i}.$$
The squares of the form $[k9^{-n},(k+1)9^{-n})\times [m9^{-n},(m+1)9^{-n})$ where $k,m\in \ZZ$ are called
the \emph{$9^{-n}$-mesh squares}. For a non-empty bounded set $X\subset \RR^2$ let $M_n(X)$ denote the number of $9^{-n}$-mesh
squares that intersect $X$. It is easy to show that
\begin{equation} \label{eq:X} \overline{\dim}_B X=\limsup_{n\to \infty} \frac{\log M_{n}(X)}{n \log 9},
\end{equation}
see also \cite[Section~3.1]{F}. For $I\subset \NN^+$ let $2^{I}$ denote
the set of functions $h\colon I \to \{0,1\}$, and for all $h\in 2^I$ let $x_h=\sum_{i\in I} h(i)3^{-i}$. First we prove that
\begin{equation} \label{eq:BP} \overline{\dim}_B \graph(f+g)=\frac 12+\frac{\log 2}{\log 3}>1.
\end{equation}
Fix $n\in \NN^+$ and let $I=\{1,\dots,2n\}$. For all $h\in 2^{I}$ and $k\in \{0,\dots ,3^n\}$ define
$$Q_{h,k}=[x_h,x_h+3^{-2n})\times [(f+g)(x_h)+k3^{-2n},(f+g)(x_h)+(k+1)3^{-2n}).$$
Clearly $Q_{h,k}$ are distinct $9^{-n}$-mesh squares. As $K+K=[0,1]$, the function
$f+g$ maps $K\cap [x_h,x_h+3^{-2n})$ onto $[(f+g)(x_h),(f+g)(x_h)+3^{-n}]$. Thus all $Q_{h,k}$ intersect $\graph(f+g)$,
and the union of $Q_{h,k}$ covers $\graph(f+g)$. Hence
$$M_n(\graph(f+g))=\#\{Q_{h,k}: h\in 2^{I},\, 0\leq k\leq 3^n\}=2^{2n}(3^n+1),$$
so \eqref{eq:X} yields \eqref{eq:BP}. Now we show that
\begin{equation} \label{eq:89} \overline{\dim}_B \graph(f)=\overline{\dim}_B \graph(g)=\frac{\log 8}{\log 9}<1.
\end{equation}
We prove this only for $f$, the proof for $g$ is analogous.
Fix $n\in \NN^+$ and define
$$J=\{1,\dots,2n\}\cup \{2n+1,2n+3,\dots,4n-1\}.$$
Then $\#J=3n$. For all $h\in 2^J$ let
$$Q_h=[x_h,x_h+3^{-2n})\times [f(x_h),f(x_h)+3^{-2n}).$$
As the map $h\mapsto (x_h,f(x_h))$ is one-to-one on $2^{J}$, the sets $Q_h$ are distinct $9^{-n}$-mesh squares.
Each $Q_h$ intersects $\graph(f)$, and the union of $Q_h$ covers $\graph(f)$. Thus
$$M_n(\graph(f))=\#\{Q_h: h\in 2^{J}\}=2^{3n}.$$
Hence \eqref{eq:X} implies \eqref{eq:89}. The theorem follows from \eqref{eq:BP} and \eqref{eq:89}.
\end{proof}

\begin{remark} Using the notation of the above proof let $F,G\in C[0,1]$ such that $F|_{K}=f$, $G|_{K}=g$,
and $F,G$ are affine on the components of $(0,1)\setminus K$. Liu et al.\ \cite[Example~3.1]{LTW} pointed out that
$$\dim_P \graph(F+G)> \max\{\dim_P \graph(F), \dim_P \graph(G)\}.$$
This answers a question of Falconer and Fraser \cite[(2.6)~page~362]{FF} in the negative.
\end{remark}

\section{Packing dimension} \label{s:P}

The goal of this section is to prove Theorem~\ref{t:grP}.

\begin{proof}[Proof of Theorem~\ref{t:grP}] We can remove countably many points from $K$ without changing the packing dimension of the set,
so by \cite[Theorem~6.4]{Ke} we may assume that $K$ is perfect. Choose a sequence $s_n \nearrow \dim_P K$ and fix $n$. By Lemma~\ref{l:pack1}
there is a compact set $K_n\subset K$ such that $\dim_P (U\cap K_n)>s_n$ for every $U\subset K$ open with $U\cap K_n\neq \emptyset$.
Clearly $K_n$ is perfect.

As a countable intersection of prevalent sets is prevalent, it is enough to show that $\dim_P \graph(f)\geq s_n+d$
for a prevalent $f\in C(K,\RR^d)$. By Corollary~\ref{c:her} it is enough to prove that
$$\iA_n=\{f\in C(K_n,\RR^d): \dim_P \graph(f)\geq s_n+d\}$$
is prevalent. Let $\{U_i\}_{i\geq 1}$ be a basis of $K_n$ consisting of non-empty open sets and let $C_i=\cl(U_i)$.
We proved that $\dim_P U_i>s_n$. Therefore the definition of $K_n$ implies that for all $i\in \NN^+$ we have
$$\overline{\dim}_B C_i\geq \overline{\dim}_B U_i\geq \dim_P U_i>s_n.$$
As $K_n$ is perfect, $C_i$ are also perfect. Therefore Theorem~\ref{t:grM} yields that
$$\iB_i=\{f\in C(C_i,\RR^d): \overline{\dim}_B \graph(f)\geq s_n+d\}$$
are prevalent. For all $i\in \NN^+$ define
$$R_i\colon  C(K_n,\RR^d)\to C(C_i,\RR^d), \quad R_i(f)=f|_{C_i}.$$
By Corollary~\ref{c:her} the sets $R^{-1}_i(\iB_i)$ are prevalent in $C(K_n,\RR^d)$, so
$\bigcap_{i=1}^{\infty} R^{-1}_i(\iB_i)$ is also prevalent. Therefore it is enough to prove that
$\bigcap_{i=1}^{\infty}  R^{-1}_i(\iB_i)\subset \iA_n$. Let us fix $f\in \bigcap_{i=1}^{\infty}  R^{-1}_i(\iB_i)$, we need to show
that $f\in \iA_n$. Let $V$ be an arbitrary non-empty relatively open subset $V$ of $\graph(f)$. By Lemma~\ref{l:pack2} it is enough to prove that
$\overline{\dim}_B V \geq s_n+d$. As $\{U_i\}_{i\in \NN}$ is an open basis of $K_n$, there is an $i\in \NN^+$
such that $\graph(f|_{C_i})\subset V$. Thus $f\in R^{-1}_i(\iB_i)$ yields that $\overline{\dim}_B V \geq \overline{\dim}_B \graph(f|_{C_i})\geq s_n+d$. The proof is complete.
\end{proof}

\section{Hausdorff dimension} \label{s:H}

The goal of this section is to give an simple proof for Theorem~\ref{t:H} by following the strategy of Fraser and Hyde \cite{FH}. First we need a theorem of
Dougherty \cite[Theorem~11]{D} stating that the image of a prevalent $f\in C(K,\RR^d)$ is as large as possible.

\begin{theorem}[Dougherty] \label{t:D} Let $K$ be an uncountable compact metric space and let $d\in \mathbb{N}^+$. Then for a prevalent $f\in C(K,\mathbb{R}^d)$ we have
$$\inter f(K)\neq \emptyset.$$
\end{theorem}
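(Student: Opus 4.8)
The plan is to reduce to the case where $K$ is a Cantor space and then exhibit an ``infinite dimensional'' witnessing measure. Since $K$ is an uncountable compact metric space it contains a Cantor space $\iC$ (Cantor--Bendixson). If $\{h\in C(\iC,\RR^d):\inter h(\iC)\neq\emptyset\}$ is prevalent, then by Corollary~\ref{c:her} applied to $\iC\subset K$ the set $\{f\in C(K,\RR^d):\inter(f|_\iC)(\iC)\neq\emptyset\}$ is prevalent in $C(K,\RR^d)$, and for such $f$ we have $\inter f(K)\supseteq\inter f(\iC)\neq\emptyset$. So it suffices to treat $K=\iC=\{0,1\}^{\NN}$.

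Write $S=\{f\in C(\iC,\RR^d):\inter f(\iC)\neq\emptyset\}$. First I would observe that $S$ is Borel: for $z\in\QQ^d$ and rational $r>0$ the set $\{f:\overline{B(z,r)}\subseteq f(\iC)\}$ is closed (if $f_n\to f$ uniformly and $w\in\overline{B(z,r)}\subseteq f_n(\iC)$, compactness of $\iC$ produces $x$ with $f(x)=w$), and $S$ is their countable union, so $S$ is $F_\sigma$. Hence it is enough to build a Borel probability measure $\mu$ on $C(\iC,\RR^d)$ with $\mu(S^c+g)=0$ for every $g$; equivalently, for every $g\in C(\iC,\RR^d)$ the random $X\sim\mu$ satisfies $\inter(X+g)(\iC)\neq\emptyset$ almost surely.

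For $\mu$ I would take the distribution of a random continuous ``space-filling type'' map $X\colon\iC\to[0,1]^d$ built by a random Cantor scheme: reading the coordinates of $\omega\in\{0,1\}^{\NN}$ in order, at a random, sparse, increasing set of \emph{active} coordinates $N_1<N_2<\cdots$ the bits read there select, via independent random rules with many options each, a nested sequence of cubes $[0,1]^d=Q_0\supseteq Q_1(\omega)\supseteq Q_2(\omega)\supseteq\cdots$, the sub-cubes at each step forming an overlapping cover of the previous cube with side ratios tending to $0$; set $X(\omega)=\bigcap_jQ_j(\omega)$. This $X$ is continuous, and $\inter X(U)\neq\emptyset$ for every nonempty clopen $U\subseteq\iC$. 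Fixing $g$, the construction is arranged so that $\mu$-a.e.\ $X$ admits a ``navigation'': choosing a target $w$ near the centre of $[0,1]^d$ and reading $\omega$ coordinate by coordinate, one maintains the invariant that the target region $\{w-g(\omega'):\omega'\in[\omega_{|n}]\}$ — whose diameter is at most $\mathrm{osc}(g|_{[\omega_{|n}]})\to0$ by uniform continuity of $g$ — lies inside the current cube, choosing at each active coordinate the sub-cube that still contains it; this produces a whole ball of reachable targets, so $\inter(X+g)(\iC)\neq\emptyset$. Running this over all $g$ gives $\mu(S^c+g)=0$, so $S^c$ is shy and $S$ is prevalent.

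The main obstacle is making the random construction robust under the addition of an \emph{arbitrary} continuous drift $g$. The navigation succeeds only if at every active level the oscillation of $g$ on the current cylinder is small compared with the current cube-side, and since the modulus of continuity of $g$ is arbitrary — $g$ need not be H\"older — no deterministic space-filling map, and no naive fixed-rate random one, can work for all $g$ simultaneously. (One already sees the lack of robustness in dimension one: the restriction of the Cantor function to the Cantor set maps it onto $[0,1]$, yet arbitrarily small continuous perturbations can destroy every interior point of the image.) This is precisely why genuinely infinite-dimensional, multi-scale randomness is needed: one must choose the law of the ``schedule of active coordinates'' (and the side ratios) so that, via a Borel--Cantelli argument, it almost surely outruns the modulus of continuity of each fixed $g$; getting that quantitative balance right is where the real work lies, while the reduction to a Cantor space, the Borel measurability of $S$, and the continuity of $X$ are routine.
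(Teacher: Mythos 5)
Your reduction to a Cantor space via Corollary~\ref{c:her}, and your observation that $S$ is $F_\sigma$ and hence Borel, are both correct; the reduction is in fact exactly how the paper handles the general $K$ (see the remark immediately following the theorem). However, the paper does not reprove the Cantor-set case at all: it simply invokes Dougherty \cite[Theorem~11]{D}. Your proposal instead attempts to re-derive it by constructing a random space-filling map, and this is precisely where the proof stops short of being one. You describe the shape of the construction but do not specify the law of the schedule of active coordinates, the branching factors, or the side ratios, and you do not carry out the Borel--Cantelli estimate that is supposed to show the navigation almost surely succeeds for every fixed $g$. You acknowledge this yourself (``getting that quantitative balance right is where the real work lies''), but that quantitative balance is the entire content of Dougherty's theorem; everything else you supply is, as you say, routine.

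The gap is not merely a matter of filling in details. The navigation you describe is adaptive: at each active level the sub-cube you pick depends on the randomness already revealed and on $g$, so the failure events at different levels are not independent, and a naive Borel--Cantelli argument does not apply. One needs either a martingale/conditional argument of the kind used in the paper's own Statement~\ref{st}, or a construction with enough built-in redundancy that a union bound works, together with an explicit choice of parameters making the failure probabilities summable \emph{uniformly in the adaptive choices} for every fixed $g$. None of that is present. As written, this is a plausible plan rather than a proof. To match the paper, cite Dougherty for the Cantor-set case and keep only your first paragraph; to be self-contained, you must exhibit $\mu$ concretely and prove the estimate.
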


\begin{remark} In fact, Dougherty proved the above theorem only for the triadic Cantor set.
As each uncountable compact metric space contains a homeomorphic copy of the triadic Cantor set (see \cite[Corollary~6.5]{K}),
Corollary~\ref{c:her} implies the more general result.
\end{remark}

The next lemma generalizes \cite[Lemma~4.1]{FH}.

\begin{lemma} \label{l:pq} Let $p,q\in (0,1]$, $d\in \NN^+$, $\theta \in \RR^d$, and $u>d/2$. Then there is a constant $c_1 \in \RR^+$
depending only on $d$ and $u$ such that
$$\int_{[0,p]^{d}} \int_{[0,p]^{d}} \frac{\mathrm{d} \alpha \, \mathrm{d} \beta}{(q^2+|\alpha-\beta+\theta|^2)^{u}}\leq c_1 p^d q^{d-2u}.$$
\end{lemma}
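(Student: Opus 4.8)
\textbf{Proof plan for Lemma~\ref{l:pq}.}
The plan is to bound the double integral by first performing the inner integral over $\beta \in [0,p]^d$ for fixed $\alpha$, and then integrating the resulting bound over $\alpha$. Set $\gamma = \alpha - \beta + \theta$; as $\beta$ ranges over $[0,p]^d$, the point $\gamma$ ranges over a translated cube of the same sidelength, so the inner integral is at most
\[
\int_{\RR^d} \frac{\mathrm{d}\gamma}{(q^2+|\gamma|^2)^u},
\]
and I would estimate this over all of $\RR^d$ rather than over the cube. Passing to polar coordinates, this equals $c(d)\int_0^\infty r^{d-1}(q^2+r^2)^{-u}\,\mathrm{d}r$, and the substitution $r = qt$ turns it into $c(d) q^{d-2u}\int_0^\infty t^{d-1}(1+t^2)^{-u}\,\mathrm{d}t$. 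The remaining $t$-integral converges precisely because $u > d/2$ (the integrand decays like $t^{d-1-2u}$ at infinity and is integrable near $0$ since $d \geq 1$), so it equals a finite constant depending only on $d$ and $u$; call the product of these constants $c_1'$. Thus the inner integral is at most $c_1' q^{d-2u}$, uniformly in $\alpha$ and $\theta$.

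Integrating this uniform bound over $\alpha \in [0,p]^d$ contributes a factor equal to the volume $p^d$ of the cube, giving the claimed bound $c_1 p^d q^{d-2u}$ with $c_1 = c_1'$ depending only on $d$ and $u$. I would remark that the hypotheses $p,q \in (0,1]$ are not actually needed for this argument — the estimate holds for all $p,q>0$ — but they are all that is required in the application, and $q \le 1$ is what makes $q^{d-2u}$ the ``dangerous'' (large) factor one wants to track.

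The only subtle point is the convergence of $\int_0^\infty t^{d-1}(1+t^2)^{-u}\,\mathrm{d}t$, which is exactly where the hypothesis $u>d/2$ enters; everything else is a change of variables and monotonicity (enlarging the domain of the inner integral from a cube to $\RR^d$). So there is no real obstacle here — the lemma is a routine Frostman-type energy estimate — and the slight care needed is just to make sure the constant genuinely depends only on $d$ and $u$ and not on $p$, $q$, or $\theta$, which is clear from the computation since $\theta$ disappears after the translation in $\gamma$ and $p$, $q$ factor out cleanly.
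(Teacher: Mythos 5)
Your proof is correct, and it takes a somewhat different (and cleaner) route than the paper. The paper first rescales $[0,p]^d$ to $[0,1]^d$, then uses a coordinate-wise clamping trick (replacing $\gamma$ by $\widetilde{\gamma}$) to reduce the inner integral to one over the fixed cube $[-1,1]^d$ with the singularity at the origin, and finally splits that integral into the ball $\{|\alpha|\le q/p\}$ and its complement, estimating each piece separately. You instead observe at once that, for each fixed $\alpha$, the inner integral over a translated cube is dominated by the integral over all of $\RR^d$, which by polar coordinates and the substitution $r=qt$ is exactly $q^{d-2u}\cdot\omega_{d-1}\int_0^\infty t^{d-1}(1+t^2)^{-u}\,\mathrm{d}t$; the outer $\alpha$-integration then contributes the factor $p^d$. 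Your version avoids the clamping lemma and the two-region split entirely, and makes it transparent where $u>d/2$ enters (convergence of the $t$-integral at $\infty$) and why the constant depends only on $d$ and $u$. Both arguments are routine Frostman-type estimates; yours is the more streamlined of the two, and your remark that $p,q\le 1$ is not needed is also correct.
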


\begin{proof} Let $\gamma\in \RR^d$ be arbitrary. Define $\widetilde{\gamma}\in \RR^d$ such that for all $i\in \{1,\dots,d\}$
\begin{equation*} \widetilde{\gamma}_i=\begin{cases}
-1 &\textrm{ if } \gamma_i<-1, \\
\gamma_i &\textrm{ if } -1\leq \gamma_i\leq 0, \\
0 &\textrm{ if } \gamma_i>0.
\end{cases}
\end{equation*}
Then we have
\begin{equation*}\label{eq:gamma}
\int_{[0,1]^{d}} \frac{\mathrm{d} \alpha}{(q^2+p^2|\alpha+\gamma|^2)^{u}}\leq \int_{[0,1]^{d}} \frac{\mathrm{d} \alpha}{(q^2+p^2|\alpha+\widetilde{\gamma}|^2)^{u}} \leq \int_{[-1,1]^{d}} \frac{\mathrm{d} \alpha}{(q^2+p^2|\alpha|^2)^{u}}.
\end{equation*}
Applying the above inequality for $\gamma=-\beta+p^{-1}\theta$ implies that
\begin{align*}
\int_{[0,p]^{d}} \int_{[0,p]^{d}} \frac{\mathrm{d} \alpha \, \mathrm{d} \beta}{(q^2+|\alpha-\beta+\theta|^2)^{u}}
&=p^{2d} \int_{[0,1]^{d}} \int_{[0,1]^{d}} \frac{\mathrm{d} \alpha \, \mathrm{d} \beta}{(q^2+p^2|\alpha-\beta+p^{-1}\theta|^2)^{u}} \\
&\leq p^{2d} \int_{[-1,1]^{d}} \frac{\mathrm{d} \alpha}{(q^2+p^2|\alpha|^2)^{u}} \\
&\leq p^{2d}\int_{[-1,1]^{d}} \frac{\mathrm{d} \alpha}{\left(\max\{q^2,p^2|\alpha|^2\}\right)^{u}} \\
&\leq p^{2d} \int_{|\alpha|\leq q/p} \frac{\mathrm{d} \alpha}{q^{2u}}+\int_{|\alpha|\geq q/p}  \frac{\mathrm{d} \alpha}{p^{2u}|\alpha|^{2u}} \\
&\leq p^{2d} \left(c_2 \left(\frac qp \right)^d q^{-2u}+p^{-2u} \int_{q/p}^{\infty} c_3 r^{d-1-2u} \, \mathrm{d} r \right) \\
&=\left(c_2+\frac{c_3}{d-2u}\right)p^d q^{d-2u}.
\end{align*}
As $c_2,c_3\in \RR^+$ depend only on $d$, setting $c_1:=c_2+c_3/(d-2u)$ finishes the proof.
\end{proof}

For the following lemma see the proof of \cite[Lemma~4.5]{FH}.

\begin{lemma} \label{l:Borel} Let $K$ be a compact metric space, let $d\in \NN^+$ and $s\in \RR^+$. Then
$$\iA=\{f\in C(K,\RR^d): \dim_H \graph(f)\geq s\}$$
is a Borel set in $C(K,\RR^d)$.
\end{lemma}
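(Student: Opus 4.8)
The plan is to express $\iA$ as a countable combination of closed sets via Hausdorff contents. First I would note that for every $f\in C(K,\RR^d)$ the graph $\graph(f)$ is a non-empty compact set, being the image of $K$ under the continuous map $x\mapsto(x,f(x))$, and recall the standard fact that for a non-empty set $X$ one has $\dim_H X\geq s$ if and only if $\mathcal{H}_\infty^t(X)>0$ for every $t\in(0,s)$; this follows from $\mathcal{H}_\infty^t(X)=0\Leftrightarrow \mathcal{H}^t(X)=0$ together with $\dim_H X=\inf\{t\geq0:\mathcal{H}_\infty^t(X)=0\}$ and the fact that $\{t\geq0:\mathcal{H}^t(X)=0\}$ is an interval of the form $[\dim_H X,\infty)$ or $(\dim_H X,\infty)$. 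Hence, writing $\iB_t=\{f\in C(K,\RR^d):\mathcal{H}_\infty^t(\graph(f))>0\}$, we have
\[
\iA=\bigcap_{t\in\QQ\cap(0,s)}\iB_t,
\]
and since $\QQ\cap(0,s)$ is countable it suffices to show that each $\iB_t$ is Borel.

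The key step is to prove that, for each fixed $t>0$, the functional $g\colon C(K,\RR^d)\to[0,\infty)$ given by $g(f)=\mathcal{H}_\infty^t(\graph(f))$ (finite by compactness of $\graph(f)$) is upper semicontinuous. I would use that uniform convergence $f_n\to f$ forces $d_H(\graph(f_n),\graph(f))\leq\|f_n-f\|_\infty\to0$ in the Hausdorff metric on $K\times\RR^d$. Given $\varepsilon>0$, compactness of $\graph(f)$ lets me choose finitely many open sets $U_1,\dots,U_N\subset K\times\RR^d$ covering $\graph(f)$ with $\sum_{j=1}^N(\diam U_j)^t<g(f)+\varepsilon$ — here one first takes a near-optimal arbitrary cover and replaces each set by a slightly larger open neighbourhood, which for $t>0$ costs arbitrarily little since $r\mapsto r^t$ is continuous, and then extracts a finite subcover. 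Setting $\eta=\dist(\graph(f),(K\times\RR^d)\setminus\bigcup_j U_j)>0$, any $n$ with $d_H(\graph(f_n),\graph(f))<\eta$ satisfies $\graph(f_n)\subset\bigcup_j U_j$, hence $g(f_n)\leq\sum_j(\diam U_j)^t<g(f)+\varepsilon$. Letting $n\to\infty$ and then $\varepsilon\to0$ gives $\limsup_n g(f_n)\leq g(f)$, so $\{f:g(f)\geq c\}$ is closed for every $c>0$.

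To conclude, I would write
\[
\iB_t=\{f:g(f)>0\}=\bigcup_{m\in\NN^+}\{f:g(f)\geq 1/m\},
\]
an $F_\sigma$ set, hence Borel; therefore $\iA=\bigcap_{t\in\QQ\cap(0,s)}\iB_t$ is Borel. The only delicate point is the upper semicontinuity argument, and within it the two harmless reductions just mentioned — that one may use open sets in the definition of $\mathcal{H}_\infty^t$ at negligible extra cost, and that a finite near-optimal open cover of the compact set $\graph(f)$ still covers every nearby graph $\graph(f_n)$. Everything else is routine; the degenerate case $K=\emptyset$ (where $\iA=\emptyset$) can be dismissed at the outset.
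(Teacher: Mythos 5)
Your proof is correct, and it follows the same Hausdorff-content/upper-semicontinuity strategy that the paper invokes: the paper does not prove this lemma itself but defers to the proof of Fraser and Hyde's Lemma~4.5, which proceeds by exactly this route (showing $f\mapsto \mathcal{H}_\infty^t(\graph(f))$ is upper semicontinuous via finite open covers and the bound $d_H(\graph(f_n),\graph(f))\leq\|f_n-f\|_\infty$, then writing $\iA$ as a countable intersection of $F_\sigma$ sets). Your reconstruction fills in the same steps, including the two standard reductions you flag (passing to slightly enlarged open sets at $o(1)$ cost since $t>0$, and extracting a finite subcover by compactness), so nothing further is needed.
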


Now we are ready to prove Theorem~\ref{t:H}.

\begin{proof}[Proof of Theorem~\ref{t:H}]
By Fact~\ref{f:prod} it is enough to prove the lower bound.

If $\dim_H K=0$ then Theorem~\ref{t:D} implies that for a prevalent $f\in C(K,\RR^d)$ we have $\inter f(K)\neq \emptyset$,
so $\dim_H f(K)=d$. As $f(K)$ is a Lipschitz image of $\graph(f)$ and Hausdorff dimension cannot increase under a Lipschitz map,
we obtain
$$\dim_H \graph(f)\geq \dim_H f(K)=d=\dim_H K+d,$$
which finishes the proof.

Thus we may assume that $\dim_H K>0$. As every uncountable compact metric space contains a Cantor space with the same Hausdorff
dimension \cite[Theorem~6.3]{K}, by Corollary~\ref{c:her} we may assume that $K$ is a Cantor space.
Fix $0<t<s<\dim_H K$, it is enough to prove that $\dim_H \graph(f)\geq t+d$ for a prevalent $f\in C(K,\RR^d)$.
As $\dim_H K>s$, by Theorem~\ref{t:energy} there exists a Borel probability measure $\nu$ on $K$ such that
$I_s(\nu)<\infty$. Then we can define inductively for all $n\in \NN^+$ integers $a_n\in \NN^+$ and for all
$(i_1,\dots,i_n)\in \iI_n:=\prod_{k=1}^n \{1,\dots, a_k\}$ non-empty compact sets $K_{i_1\dots i_n}\subset K$ such that for all
distinct indices $(i_1,\dots, i_n),(j_1,\dots,j_n)\in \iI_n$ we have

\begin{enumerate} \item $K_{i_1\dots i_n}\cap K_{j_1\dots j_n}=\emptyset$,
\item $K_{i_1\dots i_{n+1}}\subset K_{i_1\dots i_n}$ for all $i\in \{1,\dots,a_{n+1}\}$,
\item \label{ii:3} $\diam K_{i_1\dots i_n} \leq 2^{-n^2}$.
\end{enumerate}

For all $n\in \NN^{+}$ let $S_n=\{0,2^{-n}\}^{d}$, then $\#S_n=2^d$. For all
$(i_1,\dots,i_n)\in \iI_n$ define countably many independent random variables
$X_{i_1\dots i_n}$ such that for all $y\in S_n$ we have
\begin{equation} \label{eq:XY} \Pr(X_{i_1\dots i_n}=y)=2^{-d}.
\end{equation}

For each $n\in \NN^+$ and $x\in C$ there exists a unique $(i_1,\dots,i_n)\in \iI_n$ such that $x\in K_{i_1\dots i_n}$.
Define the random function $f_n\in C(K,\RR^d)$ such that
$$f_n(x)=X_{i_1\dots i_n}.$$
Let $\mathbb{P}_n$ be the probability measure on $C(K,\RR^d)$
which corresponds to the choice of $f_n$, and let $\iS_n\subset C(K,\RR^d)$ be the finite support of $\mathbb{P}_n$.
Clearly $|f_n(x)|\leq 2^{-n}$ for all $f_n\in \iS_n$ and $x\in K$, thus $\sum_{n=1}^{\infty} f_{n}$ always converges uniformly.
Let $\mathbb{P}=\prod_{n=1}^{\infty} \mathbb{P}_n$ be a probability measure on the Borel subsets of $\iS=\prod_{n=1}^{\infty}\iS_n$
and let
$$\pi\colon \iS \to C(K,\RR^d), \quad  \pi((f_n))=\sum_{n=1}^{\infty} f_n.$$
Define
$$\mu=\mathbb{P}\circ \pi^{-1}.$$
Let us fix $g\in C(K,\RR^d)$, and let $f=\sum_{n=1}^{\infty} f_n$ be a random map. Let
$$\iA=\{h\in C(K,\RR^d): \dim_H \graph(h)\geq t+d\}.$$
As $\iA$ is a Borel set by Lemma~\ref{l:Borel},
it is enough to prove that $\mu(\iA-g)=1$. Thus it is enough to show that, almost surely, $\dim_H \graph(f+g)\geq t+d$.
Define
$$F\colon K\to \graph(f+g), \quad  F(x)=(x,(f+g)(x)).$$
Let $\nu_f=\nu \circ F^{-1}$ be a random measure supported on $\graph(f+g)$. Let $\rho$ denote the metric of $K$.
\begin{statement} \label{st:0} There is a constant $c$ depending only on $s,t,d$ such that for all
$x,y\in K$, $x\neq y$ we have
$$\EE \left(\left(\rho(x,y)^{2}+|(f+g)(x)-(f+g)(y)|^2\right)^{-\frac{t+d}{2}}\right)\leq c \rho(x,y)^{-s}.$$
\end{statement}
\begin{proof}[Proof of Statement~\ref{st:0}]
Let $n=n(x,y)$ be the largest natural number $k$ such that $x,y\in C_{i_1\dots i_k}$ for some $(i_1,\dots, i_k)\in \iI_k$,
where $\max \emptyset =0$ by convention. Then \eqref{ii:3} yields that
there is a constant $c_4$ which depends only on $s,t,d$ such that
\begin{equation} \label{eq:c4} 2^{nd}\leq c_{4} \rho(x,y)^{t-s}.
\end{equation}
Clearly $$f(x)-f(y)=\sum_{i=n(x,y)+1}^{\infty} f_i(x)-\sum_{i=n(x,y)+1}^{\infty} f_i(y)=X-Y,$$
where $X$ and $Y$ are independent random variables with uniform distribution on $[0,2^{-n}]^d$.
Therefore \eqref{eq:c4} and Lemma~\ref{l:pq} with $q=\rho(x,y)$, $g(x)-g(y)=\theta$, and $u=(t+d)/2$ yield that
\begin{align*} &\EE \left(\left(\rho(x,y)^{2}+|(f+g)(x)-(f+g)(y)|^2\right)^{-\frac{t+d}{2}}\right) \\
&=4^{nd}\int_{[0,2^{-n}]^{d}} \int_{[0,2^{-n}]^{d}} \frac{\mathrm{d} \alpha \, \mathrm{d} \beta}{(\rho(x,y)^2+|\alpha-\beta+(g(x)-g(y))|^2)^{\frac{t+d}{2}}} \\
&\leq c_1 2^{nd} \rho(x,y)^{-t}\leq c_1c_4 \rho(x,y)^{-s},
\end{align*}
so $c:=c_1 c_4$ works.
\end{proof}
Now we return to the proof of Theorem~\ref{t:H}.
By Theorem~\ref{t:energy} it is enough to prove that
$I_{t+d}(\nu_f)<\infty$ almost surely, so it is enough to show that $\EE I_{t+d}(\nu_f)<\infty$. The definition of $\nu_f$,
Fubini's theorem, Statement~\ref{st:0}, and $I_s(\nu)<\infty$ yield that
\begin{align*} \EE I_{t+d}(\nu_f)&=\EE \iint_{(\graph(f+g))^2}  \frac{\mathrm{d} \nu_f(x) \, \mathrm{d} \nu_f (y)}{\rho(x,y)^{t+d}} \\
&=\EE \iint_{K^2} \frac{\mathrm{d} \nu(x) \, \mathrm{d} \nu (y)}{\left(\rho(x,y)^{2}+|(f+g)(x)-(f+g)(y)|^2\right)^{\frac{t+d}{2}}} \\
&=  \iint_{K^2} \EE \left(\left(\rho(x,y)^{2}+|(f+g)(x)-(f+g)(y)|^2\right)^{-\frac{t+d}{2}}\right) \, \mathrm{d} \nu(x) \, \mathrm{d} \nu (y) \\
&\leq \iint_{K^2} c \rho(x,y)^{-s} \, \mathrm{d} \nu(x) \, \mathrm{d} \nu (y)=c I_s(\nu)<\infty.
\end{align*}
The proof is complete.
\end{proof}

\section{Open problems} \label{s:open}

\begin{definition} \label{d:D} Let $X$ be a Banach space. We say that the function $\Delta\colon X\to \RR$ satisfies the \emph{intertwining condition} if
for all $x,y\in X$ and Lebesgue almost every $t\in \RR$ we have
\begin{equation*}  \Delta(x-ty)\geq \Delta(y).
\end{equation*}
\end{definition}

Gruslys et al.\ \cite[Theorem~1.1]{GJMNOP} proved the following.

\begin{theorem}[Gruslys et al.] \label{t:Gea}
Let $X$ be a Banach space and let $\Delta\colon X\to \RR$ be a Borel measurable function satisfying the intertwining condition.
Then for a prevalent $x\in X$ we have
$$\Delta(x)=\sup_{y\in X} \Delta(y).$$
\end{theorem}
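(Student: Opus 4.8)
The plan is to reduce the problem to a one-dimensional computation along a generic line and then invoke Fubini for the Haar-null definition. Fix a target value $M=\sup_{y\in X}\Delta(y)$ (if $M=-\infty$ there is nothing to prove, and if $M=+\infty$ one runs the same argument with a sequence of thresholds tending to infinity). It suffices to exhibit, for each threshold $r<M$, a Borel probability measure witnessing that the set $\{x\in X:\Delta(x)\le r\}$ is shy; then the countable intersection over a sequence $r_k\nearrow M$ still has shy complement, which is exactly the conclusion.

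First I would pick, for the given threshold $r<M$, a point $y_0\in X$ with $\Delta(y_0)>r$, and let $\mu$ be the pushforward of (normalized) Lebesgue measure on $[-1,1]$ under the map $t\mapsto ty_0$; equivalently one can use the standard Gaussian on the line $\RR y_0$, but a compactly supported measure is cleanest. The measure $\mu$ is a Borel probability measure on $X$ supported on the one-dimensional subspace $\RR y_0$. Now fix an arbitrary $x\in X$. By the intertwining condition applied with the pair $(x,y_0)$, for Lebesgue-almost every $t\in\RR$ we have $\Delta(x-ty_0)\ge \Delta(y_0)>r$, hence the translate $x-ty_0$ lies outside the set $\{ \Delta\le r\}$ for a.e.\ $t$. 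Rescaling $t$ so that the relevant parameter ranges over $[-1,1]$, this says exactly that $\mu$ assigns measure zero to $\{\Delta\le r\}+x$, i.e.\ to the translate of the bad set by $x$. Since $x$ was arbitrary and $\{\Delta\le r\}$ is Borel (as $\Delta$ is Borel measurable), this verifies Definition~\ref{d:shy}: the set $\{x:\Delta(x)\le r\}$ is shy.

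Finally, assembling the pieces: write $B_k=\{x\in X:\Delta(x)\le r_k\}$ for a sequence $r_k\nearrow M$ (taking $r_k\to\infty$ in the unbounded case). Each $B_k$ is shy by the previous paragraph. Shy sets form a $\sigma$-ideal, so $\bigcup_k B_k=\{x:\Delta(x)<M\}$ is shy, and therefore its complement $\{x:\Delta(x)=M\}=\{x:\Delta(x)\ge M\}$ is prevalent, which is the assertion of Theorem~\ref{t:Gea}. (In the case $M$ is attained one simply notes $\{x:\Delta(x)<M\}=\bigcup_k\{x:\Delta(x)\le M-1/k\}$; in the case it is not attained the same union works with any sequence approaching $M$.)

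The only genuinely delicate point is the interplay between "almost every $t$" in the intertwining hypothesis and the chosen witness measure $\mu$: one must make sure the null set of exceptional $t$'s (which depends on $x$) is genuinely Lebesgue-null so that it has $\mu$-measure zero after the linear change of variables, and that measurability issues — the Borel measurability of $\{\Delta\le r\}$ and the fact that $t\mapsto\Delta(x-ty_0)$ need not itself be measurable, only that $\{t:\Delta(x-ty_0)\le r\}$ is Lebesgue-null — are handled correctly. Everything else is bookkeeping with the $\sigma$-ideal of shy sets.
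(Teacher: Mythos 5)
The paper does not reprove Theorem~\ref{t:Gea}; it only cites it from \cite{GJMNOP}, so there is no in-paper proof to compare against. Your argument is, however, the natural and essentially correct one, and it matches the approach the paper alludes to in Section~\ref{s:open}: prevalence is witnessed by a probability measure supported on a one-dimensional subspace $\RR y_0$, with $y_0$ chosen so that $\Delta(y_0)>r$, and the intertwining condition forces every translate of the Borel set $\{\Delta\le r\}$ to be $\mu$-null. A few small corrections. First, as written the computation is off by a sign: applying the intertwining condition to $(x,y_0)$ shows that $\{t:\,x-ty_0\in\{\Delta\le r\}\}$ is Lebesgue-null, which yields $\mu(\{\Delta\le r\}-x)=0$ rather than $\mu(\{\Delta\le r\}+x)=0$; but since $x$ ranges over all of $X$ (or equivalently, since $\mu$ is symmetric and Lebesgue measure is invariant under $t\mapsto -t$), the two statements are interchangeable and the conclusion stands. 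Second, the measurability worry you flag at the end is vacuous: $t\mapsto\Delta(x-ty_0)$ is the composition of the continuous map $t\mapsto x-ty_0$ with the Borel map $\Delta$, hence Borel, so $\{t:\Delta(x-ty_0)\le r\}$ is automatically Borel. Third, in the case $\sup_y\Delta(y)=+\infty$ your scheme (taking $r_k\to\infty$) would make $X$ itself a countable union of shy sets, which is impossible; so what the argument actually shows is that the intertwining condition precludes $\sup\Delta=+\infty$, and it would be cleaner to say that explicitly rather than pretend the same conclusion $\Delta(x)=\sup\Delta$ survives in that case.
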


Fraser and Hyde~\cite{FH2} proved that if $K$ is an uncountable compact metric space then $\{f\in C(K,\RR): \dim_H f(K)=1\}$ is not only prevalent
but \emph{$1$-prevalent}, i.e.\ prevalence is witnessed by a  measure supported on a one-dimensional subspace. It would be interesting to decide whether this is a general phenomena. For more on this notion and related problems see \cite{FH2}.
Specially, we are interested whether the theorems of our paper can be generalized similarly.
It was proved in \cite{GJMNOP} that if $K\subset \RR^m$ satisfies the property of Theorem~\ref{t:GJMNOP} then the intertwining condition holds for $\Delta \colon C(K,\RR)\to \RR$, $\Delta(f)=\dim \graph(f)$, where $\dim$ denotes the upper or lower box dimension. Hence Theorem~\ref{t:Gea}
yields that prevalence can be replaced by $1$-prevalence in Theorem~\ref{t:GJMNOP}. In the case of the Hausdorff dimension the following
problem is open even for $C[0,1]$, see \cite[Question~1.5]{FH}.

\begin{problem} Let $\dim$ be one of $\dim_H$, $\underline{\dim}_B$, $\overline{\dim}_B$, or $\dim_P$.
Let $K$ be a compact metric space and let $d\in \NN^+$. Define
$$\Delta \colon C(K,\RR^d)\to \RR, \quad \Delta(f)=\dim \graph(f).$$
Does the intertwining condition hold for $\Delta$?
\end{problem}

Example~\ref{ex1} shows that Theorem~\ref{t:grM} does not remain true if $K$ may have infinitely many isolated points.
In the generic setting Hyde et al.\ \cite[Theorem~1]{HLOPS} proved the following theorem.
It was stated only for $K\subset \RR$ and $d=1$, but the proof works verbatim for the general case.
\begin{theorem}[Hyde et al.] Let $K$ be a compact metric space and let $d\in \RR^+$.
For a generic continuous function $f\in C(K, \RR^d)$ we have
\begin{equation*} \overline{\dim}_B \graph(f)=\sup_{g\in C(K,\RR^d)} \overline{\dim}_B \graph(g).
\end{equation*}
\end{theorem}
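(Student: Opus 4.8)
The plan is a Baire category argument. The crucial point is that, although adding an arbitrary continuous function can change the upper box dimension of a graph unpredictably (Theorem~\ref{t:Cantor}), adding a \emph{Lipschitz} function leaves it unchanged, and so does rescaling the $\RR^d$-coordinate; these two bi-Lipschitz invariances replace the (false) subadditivity that would make the argument trivial.

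Write $D=\sup_{g\in C(K,\RR^d)}\overline{\dim}_B\graph(g)$. The inequality $\overline{\dim}_B\graph(f)\le D$ is immediate from the definition of $D$, so it is enough to show that $\{f\in C(K,\RR^d):\overline{\dim}_B\graph(f)\ge D\}$ is comeager in the Polish space $C(K,\RR^d)$. For every rational $q<D$ and $n_0\in\NN^+$ set
\[
\iU_{q,n_0}=\bigl\{f\in C(K,\RR^d):\ \exists\,n\ge n_0\text{ with }N_n(\graph(f))>2^{nq}\bigr\}.
\]
Since $\overline{\dim}_B\graph(f)=\limsup_{n}\frac{\log N_n(\graph(f))}{n\log 2}$, a routine unwinding of the definitions shows $\bigcap_{q\in\QQ,\,q<D}\bigcap_{n_0\ge 1}\iU_{q,n_0}\subset\{f:\overline{\dim}_B\graph(f)\ge D\}$ (with the convention $\sup\QQ=\infty$; note that if $D=\infty$ then $\overline{\dim}_B K=\infty$ and every graph has infinite upper box dimension, so the statement is trivial). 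Hence I only have to check that each $\iU_{q,n_0}$ is open and dense, and then invoke the Baire category theorem.

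For openness I would verify that $f\mapsto N_n(\graph(f))$ is lower semicontinuous: if $\{(x_i,f(x_i))\}_{i=1}^m$ is a $2^{-n}$-packing of $\graph(f)$ and $\|f'-f\|_\infty<\tfrac12\min_{i\ne j}\bigl(\rho_{K\times\RR^d}((x_i,f(x_i)),(x_j,f(x_j)))-2^{-n}\bigr)$, then $\{(x_i,f'(x_i))\}_{i=1}^m$ is again a $2^{-n}$-packing, so $\{f:N_n(\graph(f))>2^{nq}\}$ is open and $\iU_{q,n_0}$ is a union of such sets. For density, fix $g\in C(K,\RR^d)$ and $\eta>0$; choose $h\in C(K,\RR^d)$ with $\overline{\dim}_B\graph(h)>q$, possible because $q<D$. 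The two bi-Lipschitz facts I will use (see \cite{F}) are that for any Lipschitz $\psi\in C(K,\RR^d)$ the shear $(x,y)\mapsto(x,y+\psi(x))$ and, for any $\lambda>0$, the map $(x,y)\mapsto(x,\lambda y)$ are bi-Lipschitz self-homeomorphisms of $K\times\RR^d$, so that $\overline{\dim}_B\graph(u+\psi)=\overline{\dim}_B\graph(u)$ and $\overline{\dim}_B\graph(\lambda u)=\overline{\dim}_B\graph(u)$ for every $u\in C(K,\RR^d)$. Now pick a Lipschitz $\widetilde g$ with $\|\widetilde g-g\|_\infty<\eta/2$ (Lipschitz maps are dense in $C(K,\RR^d)$) and, assuming $h$ is non-constant, a $\lambda\in(0,1]$ with $\lambda\|h\|_\infty<\eta/2$, and put $h^*=\widetilde g+\lambda h$ (if $h$ is constant then $\overline{\dim}_B K=\overline{\dim}_B\graph(h)>q$ and $h^*=\widetilde g$ already works). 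Then $\|h^*-g\|_\infty<\eta$, while the two facts give $\overline{\dim}_B\graph(h^*)=\overline{\dim}_B\graph(\lambda h)=\overline{\dim}_B\graph(h)>q$; consequently $N_n(\graph(h^*))>2^{nq}$ for infinitely many $n$, so $h^*\in\iU_{q,n_0}$, and density follows.

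The step I expect to be the main obstacle is precisely the density argument, and more specifically the realization that one cannot simply perturb $g$ to $g+\lambda h$: by Theorem~\ref{t:Cantor} the dimension of $\graph(g+\lambda h)$ need not be controlled by those of $\graph(g)$ and $\graph(\lambda h)$. Passing first to a Lipschitz approximant $\widetilde g$, for which the shear map restores bi-Lipschitz invariance, is what makes the perturbation work; the verification of openness and the reduction via the Baire category theorem are then routine.
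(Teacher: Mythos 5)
The paper does not give its own proof of this theorem; it cites \cite[Theorem~1]{HLOPS} and remarks that the argument there, written for $K\subset\RR$ and $d=1$, extends verbatim. Your proof is correct and follows the natural Baire-category route: the sets $\iU_{q,n_0}$ are open because $f\mapsto N_n(\graph(f))$ is lower semicontinuous (your $\|f'-f\|_\infty$ estimate is exactly what is needed, since $\graph(f)$ is compact so the maximal packing is finite and attained), and dense because any target $g$ can first be replaced by a Lipschitz $\widetilde g$, after which the shear $(x,y)\mapsto(x,y+\widetilde g(x))$ and the vertical scaling $(x,y)\mapsto(x,\lambda y)$ are bi-Lipschitz on $K\times\RR^d$ and hence leave $\overline{\dim}_B\graph(\cdot)$ unchanged, giving $\overline{\dim}_B\graph(\widetilde g+\lambda h)=\overline{\dim}_B\graph(h)>q$. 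You also correctly identify the key obstruction --- by Theorem~\ref{t:Cantor}, $\overline{\dim}_B\graph(\cdot)$ is not subadditive under pointwise addition, so perturbing $g$ directly by $\lambda h$ gives no control; passing to a Lipschitz approximant of $g$ is what restores exact invariance --- and you dispatch the degenerate cases ($D=\infty$ forces $\overline{\dim}_B K=\infty$ via Fact~\ref{f:prod}, and $h$ constant gives $\overline{\dim}_B\graph(\widetilde g)\geq\overline{\dim}_B K>q$ since the projection $\graph(\widetilde g)\to K$ has a $1$-Lipschitz inverse) correctly. The only quibble is cosmetic: the statement's $d\in\RR^+$ is an evident typo for $d\in\NN^+$.
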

Kelgiannis and Laschos \cite{KL} explicitly computed this supremum in some non-trivial cases. It would be
interesting to know whether the analogue of the above theorem holds for prevalent maps as well.

\begin{problem} Let $K$ be a compact metric space, let $d\in \NN^+$, and let
$\dim$ be one of $\underline{\dim}_B$ or $\overline{\dim}_B$.
Is it true for a prevalent $f\in C(K,\RR^d)$ that
$$\dim \graph(f)=\sup_{g\in C(K,\RR^d)} \dim \graph(g)?$$
\end{problem}

\begin{definition}
A function $h \colon [0,\infty)\to [0,\infty)$ is defined to be a \emph{gauge function} if it is non-decreasing
and $h(0)=0$. The \emph{generalized $h$-Hausdorff measure} of a metric space $X$ is defined as
\begin{align*}
\mathcal{H}^{h}(X)&=\lim_{\delta\to 0+}\mathcal{H}^{h}_{\delta}(X)
\mbox{, where}\\
\mathcal{H}^{h}_{\delta}(X)&=\inf \left\{ \sum_{i=1}^\infty h(\diam
A_{i}): X \subseteq \bigcup_{i=1}^{\infty} A_{i},~
\forall i \, \diam A_i \le \delta \right\}.
\end{align*}
\end{definition}

This concept allows us to measure the size of metric spaces more precisely compared to Hausdorff dimension.
It is really needed to find the exact measure for the level sets of a linear Brownian motion or
for the range of a $d$-dimensional Brownian motion. For more on applications and for other references
see \cite{MP}. We are not able to decide whether the graph of a prevalent continuous map is as large as possible according to this finer scale.

\begin{problem} Let $K$ be a compact metric space and let $d\in \NN^+$. Let $h,g$ be gauge functions such that $\iH^{h}(K)>0$ and
$$\lim_{r\to 0+} \frac{g(r)}{h(r)r^d}=\infty.$$ Is it true that for a prevalent $f\in C(K,\RR^d)$ we have
$$\iH^{g} \left(\graph(f)\right)>0?$$
\end{problem}

Note that for $K\subset \RR$ with positive Lebesgue measure the above problem was answered positively in \cite{BDE}.

\subsection*{Acknowledgments}
We thank an anonymous referee for some helpful remarks and for suggesting new references.

\frenchspacing

\vspace{8mm}

\end{document}